\numberwithin{equation}{section}
\newtheorem{theorem}{Theorem}[section]
\newtheorem{proposition}[theorem]{Proposition}
\newtheorem{lemma}[theorem]{Lemma}
\newtheorem{corollary}[theorem]{Corollary}
\newtheorem{definition}[theorem]{Definition}
\newtheorem{remark}[theorem]{Remark}
\newtheorem{example}[theorem]{Example}
\newtheorem{question}[theorem]{Question}
\begin{document}

\title{Buchsbaumness of finite complement simplicial affine semigroups }

\author{OM PRAKASH BHARDWAJ}
		\address{Chennai Mathematical Institute, Siruseri, Chennai, Tamil Nadu 603103, India
			}
		\email{omprakash@cmi.ac.in; om.prakash@alumni.iitgn.ac.in}        
  
		\author{CARMELO CISTO}
		\address{Universit\`{a} di Messina, Dipartimento di Scienze Matematiche e Informatiche, Scienze Fisiche e Scienze della Terra\\
			Viale Ferdinando Stagno D'Alcontres 31\\
			98166 Messina, Italy}
		\email{carmelo.cisto@unime.it}
        
\subjclass{13H10, 13D02, 20M25, 05E40}
\keywords{Buchsbaum rings, simplicial affine semigroups, minimal presentations, $\mathcal{C}$-semigroups, maximal embedding dimension}

\begin{abstract}
    In this article, we classify all Buchsbaum simplicial affine semigroups whose complement in their (integer) rational polyhedral cone is finite. We show that such a semigroup is Buchsbaum if and only if its set of gaps is equal to its set of pseudo-Frobenius elements. Furthermore, we provide a complete structure of these affine semigroups. In the case of affine semigroups with maximal embedding dimension, we provide an explicit formula for the cardinality of the minimal presentation in terms of the number of extremal rays, the embedding dimension, and the genus. Finally, we observe that, unlike the complete intersection, Cohen–Macaulay, and Gorenstein properties, the Buchsbaum property is not preserved under gluing of affine semigroups. 
\end{abstract}

\maketitle

\section*{Introduction}

Let $A$ be a Noetherian local ring with maximal ideal $\mathfrak{m}$.  
If the difference $\ell_A(A/q) - e_A(q)$ is same for all parameter ideals $q$ of $A$, where $\ell_A(-)$ denotes the length and $e_A(q)$ is the multiplicity of $A$ relative to the parameter ideal $q$, then $A$ is called a \emph{Buchsbaum ring}. This is equivalent to the condition that every system of parameters of $A$ is a weak regular sequence. Thus, every Cohen-Macaulay ring is Buchsbaum. There has been very extensive study of Buchsbaum rings and singularities. For details, one can refer to the book by St\"{u}ckrad and Vogel~\cite{stuckrad-vogel}.
There has been significant study of Buchsbaum semigroup rings, for example see \cite{bresinsky-schenzel-vogel, etocodimension2,buchsbaum-simplicial,goto2,goto, kamoibuchsbaum, lam-trung, trungbuchsbaum, trung-hoa} and many more. In~\cite{trungbuchsbaum}, Trung gave a criterion for the Buchsbaum property of a simplicial affine semigroup using the group generated by the affine semigroup and its extremal rays. A nice tool to study simplicial affine semigroups is the Apéry set with respect to its extremal rays.  
In~\cite{buchsbaum-simplicial}, the authors reformulated the criterion of Trung~\cite{trungbuchsbaum} in terms of the Ap\'{e}ry set with respect to its extremal rays.

A recent line of research, introduced for instance in \cite{failla2016algorithms} and \cite{garcia2018extension}, is focused on studying affine semigroups having finite complement in $\mathbb{N}^d$ or in the cone spanned by the semigroup, called $\mathcal{C}$-semigroups. The motivation for introducing this family of monoids lies in the aim of generalizing, in higher dimension, the concept of numerical semigroup in $\mathbb{N}$ and its features. Concerning the algebraic properties of the semigroup ring associated to a $\mathcal{C}$-semigroup in $\mathbb{N}^d$, it is known that if $d\geq 2$, then the semigroup ring is not Cohen-Macaulay. This fact closes the investigation, for $\mathcal{C}$-semigroups in $\mathbb{N}^d$ having $d\geq 2$, of some well known properties included in the rich theory of Cohen-Macaulay rings, like Gorenstein, Complete intersection and others (differently to the case of numerical semigroups rings, where a wide literature has been produced in this direction). So the road remains open for studying the more general property of being Buchsbaum for $\mathcal{C}$-semigroups, a research that has not yet been developed in detail for these affine semigroups. 

In this article, we characterize simplicial $\mathcal{C}$-semigrups having Buchsbaum semigroup ring. In particular, if $S$ is a $\mathcal{C}$-semigroups, where $\mathcal{C}$ is the integer cone spanned by $S$, then we prove in Theorem~\ref{thm:H=PF} that the semigroup ring of $S$ is Buchsbaum if and only if all elements in the finite set $\mathcal{C}\setminus S$ (namely, the gaps of $S$) are pseudo-Frobenius (that is, the sum with every nonzero element of $S$ belongs to $S$). This result allows also to characterize completely the structure of simplicial Buchsbaum $\mathcal{C}$-semigroups, in Corollary~\ref{cor:buchs-ideal}, by the notion of ideal. A classical problem in the study of semigroup rings is to determine the minimal number of generators of the defining ideal, denoted by $\mu(I_S)$. This problem has been extensively studied for numerical semigroups but remains less explored in the context of affine semigroups. When $S$ is a simplicial affine semigroup and $\mathbb{K}[S]$ is Cohen-Macaulay of codimension two, Kamoi \cite{kamoicm} proved that $\mu(I_S) \leq 3$. Eto \cite{etocodimension2} later generalized Kamoi’s result to the Buchsbaum case, showing that if $\mathbb{K}[S]$ is Buchsbaum of codimension two, then $\mu(I_S) \leq 4$. In \cite{goto}, Goto studied Buchsbaum rings of maximal embedding dimension. In our work, we focus on Buchsbaum simplicial affine semigroups of maximal embedding dimension with finite complement in their rational cone and provide an explicit formula for the minimal number of generators of the defining ideal of the associated semigroup ring.

The paper is structured as follows. In Section 1 we recall the basic notions about Buchsbaum property for affine semigroup rings and about $\mathcal{C}$-semigroups, used throughout this work. In Section 2, inspired by \cite{Li}, we provide some very useful tools concerning \emph{depth k regions} in an affine integer cone. These tools are a key ingredient to prove, in Section 3, the main result Theorem~\ref{thm:H=PF}, which characterizes Buchsbaum simplicial $\mathcal{C}$-semigroups by the equality of the set of gaps with the set of pseudo-Frobenius elements. In the same section, it is proved that a simplicial $\mathcal{C}$-semigroup $S$ is Buchsbaum if and only if $S\setminus \{\mathbf{0}\}$ is an ideal of $\mathcal{C}$. In Section 4, we explore the property of having maximal embedding dimension for Buchsbaum simplicial $\mathcal{C}$-semigroups. In particular, in Theorem~\ref{thm:buchs-MED}, we characterize these monoids in terms of their minimal elements with respect to the order induced by $\mathcal{C}$ and we provide, in Theorem~\ref{thm:mu-MED}, a formula for the cardinality of the minimal presentation in terms of the number of extremal rays, minimal generators and gaps. Finally, in Section 5, we conclude showing, by an example, that for affine semigroups the Buchsbaum property is not preserved by gluing and providing some open questions.

\section{Preliminaries}
Let $(A, \mathfrak{m})$ be a local ring. A sequence $x_1, \ldots, x_k$ in $A$ is called a \emph{weak-regular sequence} if
\[
\mathfrak{m}(x_1, \ldots, x_{i-1}) : x_i \subseteq (x_1, \ldots, x_{i-1})
\]
for each $i = 1, \ldots, k$. Let $d = \dim(A)$. A sequence $x_1, \ldots, x_d$ is called a \emph{system of parameters} of $A$ if the ideal $(x_1, \ldots, x_d)$ is $\mathfrak{m}$-primary. The ring $A$ is said to be \emph{Buchsbaum} if every system of parameters of $A$ is a weak-regular sequence. Suppose that $A$ is a finitely generated homogeneous algebra over a field, and let $\mathfrak{m}$ denote its maximal homogeneous ideal. In this case, $A$ is called \emph{Buchsbaum} if the local ring $A_{\mathfrak{m}}$ is Buchsbaum.

Let $S = \langle \mathbf{a}_1, \ldots, \mathbf{a}_n \rangle$ be an affine semigroup, minimally generated by $\mathbf{a}_1, \ldots, \mathbf{a}_n \in \mathbb{N}^d$. Throughout the paper we denote $\mathrm{msg}(S)=\{\mathbf{a}_1, \ldots, \mathbf{a}_n\}$, that is, the set of minimal generators of $S$. Consider the graded $\mathbb{K}$-algebra homomorphism
\[
\pi : \mathbb{K}[x_1, \ldots, x_n] \to \mathbb{K}[S] = \mathbb{K}[\mathbf{t}^{\mathbf{a}}:= t_1^{a_1}t_2^{a_2}\cdots t_d^{a_d}\mid \mathbf{a} \in S] \subseteq \mathbb{K}[t_1, \ldots, t_d]
\]
defined by $\pi(x_i) = \mathbf{t}^{\mathbf{a}_i}$ for each $i = 1, \ldots, n$, where $\mathbf{t}^{\mathbf{a}_i}$ denotes the monomial corresponding to $\mathbf{a}_i$. The map $\pi$ is surjective by construction, and its kernel, denoted by $I_S$, is a prime ideal of the polynomial ring $\mathbb{K}[x_1, \ldots, x_n]$. Therefore, the semigroup ring $\mathbb{K}[S]$ has the form
\[
\mathbb{K}[S] \cong \mathbb{K}[x_1, \ldots, x_n]/I_S,
\]
where $I_S = \ker(\pi)$ is referred to as the \emph{defining ideal} of the semigroup ring $\mathbb{K}[S]$. Define $\deg(x_i) = \mathbf{a}_i$ for $1 \leq i \leq n$. For a monomial $\mathbf{x}^\mathbf{u} := x_1^{u_1} x_2^{u_2} \cdots x_n^{u_n}$, define the $S$-degree of $\mathbf{x}^\mathbf{u}$ as
$\deg_S(\mathbf{x}^\mathbf{u}) := \sum_{i=1}^n u_i \mathbf{a}_i$. With this grading, the polynomial ring $\mathbb{K}[x_1, \ldots, x_n]$ becomes an $S$-graded ring, and the defining ideal $I_S$ becomes a graded ideal with respect to this grading. Thus the semigroup ring $\mathbb{K}[S]$ is a graded ring with respect to $S$-grading. We say $S$ is Buchsbaum if $\mathbb{K}[S]$ is a Buchsbaum ring. The defining ideal of the affine semigroup $S$ is related to the concept of presentation of a semigroup. Suppose $S$ is defined as above and consider the following map:
$$ \varphi: \mathbb{N}^n \rightarrow S \qquad \text{defined by} \qquad (b_1,\ldots,b_n)\mapsto \sum_{i=1}^n b_i \mathbf{a}_i.$$
The set $\ker(\varphi)=\{(\mathbf{a},\mathbf{b})\in \mathbb{N}^n \times \mathbb{N}^n\mid \varphi(\mathbf{a})=\varphi(\mathbf{b})\}$ is a congruence in $\mathbb{N}^n$ and it is called the \emph{kernel congruence} of $S$. A \emph{presentation} $\sigma$ of $S$ is a generating system of $\ker(\varphi)$, that is, $\ker(\varphi)$ is the minimal congruence containing $\sigma$. If $\sigma$ is a presentation of $S$, considering the defining ideal $I_S$, it is known that $I_S=(\{\mathbf{x}^{\mathbf{u}}-\mathbf{x}^{\mathbf{v}} \mid (\mathbf{u},\mathbf{v})\in \sigma\})$. If $\sigma$ is a minimal set with respect to inclusion in the set of presentations of $S$, then $\sigma$ is called a \emph{minimal presentation} of $S$. It is known that every minimal presentation of $S$ has the same cardinality and equivalently every minimal set of generators of $I_S$ has the same cardinality. For some references and all details about these facts, see for instance \cite{overview} or the book \cite{fg}. By the above discussion, for every affine semigroup $S$ we define the invariant $\mu(I_S)$ as the cardinality of a minimal presentation of $S$ or, equivalently, the cardinality of a minimal generating set of the defining ideal $I_S$.

\medskip
Let $S\subseteq \mathbb{N}^d$ be an affine semigroup. We denote $\mathrm{cone}(S)=\mathrm{Span}_{\mathbb{Q}_{\geq 0}}(S)=\{\sum_{i=1}^r q_i \mathbf{s}_i\mid r\in \mathbb{N}, \mathbf{s}_i\in S, q_i\in \mathbb{Q}_{\geq 0}\}$. Let $A$ be the set of minimal generators of $S$, then there exists a subset $B\subseteq A$ such that $\mathrm{cone}(S)=\mathrm{Span}_{\mathbb{Q}_{\geq 0}}(B)$ and for all $B'\subsetneq B$ then $\mathrm{cone}(S)\neq \mathrm{Span}_{\mathbb{Q}_{\geq 0}}(B')$. A set having the property of $B$ is called a set of \emph{extremal rays} of $S$. Assume $B=\{\mathbf{b}_1,\ldots,\mathbf{b}_t\}$ for some $t\in \mathbb{N}$. For all $i\in \{1,\ldots,t\}$, let us denote $\mathbf{n}_i=\min \{q\mathbf{b}_i \in S\mid q\in \mathbb{Q}_{\geq 0}\}$. Define $E=\{\mathbf{n}_1,\ldots,\mathbf{n}_t\}$. Obviously, $E$ is a set of extremal rays of $S$ that we call \textit{minimal extremal rays}. Denote by $\mathrm{G}(S)=\{\mathbf{a}-\mathbf{b}\in \mathbb{Z}^d\mid \mathbf{a},\mathbf{b}\in S\}$ the subgroup in $\mathbb{Z}^d$ generated by $S$.  If $S$ is an affine semigroup having $t$ extremal rays and $t=\mathrm{rank}(\operatorname{G}(S))$, then $S$ is called a \textit{simplicial} affine semigroup. In this case, the vectors in a set of extremal rays of $S$ are linearly independent over $\mathbb{Z}$. If $S\subseteq \mathbb{N}^d$ is a simplicial affine semigroup, we can always assume $\mathrm{rank}(\operatorname{G}(S))=d$, otherwise it is possible to embed $S$ in $\mathbb{N}^r$, with $r<d$. So, throughout the paper, we will always assume that $S$ has exactly $d$ extremal rays.

\medskip
The set of gaps of $S$ is defined as $\mathcal{H}(S)=(\mathrm{cone}(S)\setminus S)\cap \mathbb{N}^d$ and the value $\operatorname{g}(S)=|\mathcal{H}(S)|$ is referred as the \emph{genus} of $S$.  An element $\mathbf{f}\in \mathcal{H}(S)$ is called a\textit{ pseudo-Frobenius} element if $\mathbf{f} + S\setminus\{0\} \subseteq S$. The set of pseudo-Frobenius elements is denoted by $\mathrm{PF}(S)$. In particular,
$$\operatorname{PF}(S)=\{\mathbf{h}\in \mathcal{H}(S)\mid \mathbf{h}+\mathbf{s}\in S, \text{ for all }\mathbf{s}\in S\setminus \{\mathbf{0}\}\}.$$
Let $S$ be an affine semigroup minimally generated by $\{\mathbf{a}_1, \ldots, \mathbf{a}_n\}$ with $\mathrm{PF}(S) \neq \emptyset$. Then, by \cite[Theorem~6]{garcia-pseudofrobenius}, the projective dimension of $\mathbb{K}[S]$ is $n-1$, which is the maximal possible projective dimension for a semigroup ring associated to an affine semigroup minimally generated by $n$ elements. For this reason, an affine semigroup with a non-empty set of pseudo-Frobenius elements is called a \emph{maximal projective dimension semigroup} (see \cite{garcia-pseudofrobenius}). Consequently, by Auslander-Buchsbaum formula, a maximal projective dimension semigroup is never Cohen-Macaulay, unless it is one-dimensional.

\medskip 
In \cite{trungbuchsbaum}, Trung gave the following necessary and sufficient condition for the Buchsbaum property of a semigroup ring associated to a simplicial affine semigroup.  

\begin{theorem}[{\cite[Lemma 4]{trungbuchsbaum}}]\label{simplicialCMBcriterion}
Let $S$ be a simiplicial affine semigroup in $\mathbb{N}^d$ and $\{\mathbf{a}_1,\ldots,\mathbf{a}_d\}$ be a set of extremal rays of $S.$ Then $S$ is Buchsbaum if and only if 
\[\{ \mathbf{x} \in G(S) \mid \mathbf{x}+2\mathbf{a}_i, \mathbf{x}+2\mathbf{a}_j \in S ~ \text{for some }~ i \neq j\} + S \setminus \{\mathbf{0}\} \subset S.\]
\end{theorem}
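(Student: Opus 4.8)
The plan is to work entirely with the canonical homogeneous system of parameters coming from the extremal rays and to translate the Buchsbaum property into the combinatorics of the grading group $G(S)$. Set $y_i=\mathbf{t}^{\mathbf{a}_i}$ for $i=1,\dots,d$. Since the $\mathbf{a}_i$ span $\mathrm{cone}(S)$ and $\dim \mathbb{K}[S]=d=\mathrm{rank}\,G(S)$, the radical of $(y_1,\dots,y_d)$ is the maximal homogeneous ideal $\mathfrak{m}$, so $y_1,\dots,y_d$ is a homogeneous system of parameters. I would then invoke a cohomological characterization of Buchsbaumness (in the spirit of St\"uckrad--Vogel): $\mathbb{K}[S]$ is Buchsbaum if and only if, for one (equivalently every) homogeneous system of parameters $\underline{y}$, the natural maps from Koszul cohomology to local cohomology $H^i(\underline{y};\mathbb{K}[S])\to H^i_{\mathfrak{m}}(\mathbb{K}[S])$ are surjective for all $i<d$. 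Because the whole Koszul/\v{C}ech apparatus is $G(S)$-graded, it splits into a family of complexes of $\mathbb{K}$-vector spaces indexed by $\mathbf{x}\in G(S)$, each piece being either $0$ or $\mathbb{K}$; this is precisely the ``depth $k$ region'' combinatorics recalled from \cite{Li}, and it reduces the statement to a bookkeeping problem over $G(S)$.

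Next I would compute these graded pieces. Using $H^i_{\mathfrak{m}}(\mathbb{K}[S])=\varinjlim_n H^i(\underline{y}^{[n]};\mathbb{K}[S])$ with transition maps induced by multiplication by $y_1\cdots y_d$, the surjectivity of $H^i(\underline{y};\mathbb{K}[S])\to H^i_{\mathfrak{m}}(\mathbb{K}[S])$ should already be detected at the passage from the first Koszul stage $(y_1,\dots,y_d)$ to the second stage $(y_1^2,\dots,y_d^2)$. Since $\deg y_i^2=2\mathbf{a}_i$, the data controlling stage two is the set of directions $\{\,i:\mathbf{x}+2\mathbf{a}_i\in S\,\}$; this is exactly where the exponent $2$ in the criterion originates. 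Tracking which classes at stage two fail to lie in the image from stage one isolates the degrees $\mathbf{x}$ belonging to
\[
T=\{\mathbf{x}\in G(S)\mid \mathbf{x}+2\mathbf{a}_i\in S \text{ and } \mathbf{x}+2\mathbf{a}_j\in S \text{ for some } i\neq j\}.
\]
The requirement ``some $i\neq j$'' encodes that such a class contributes to a cohomological degree $i<d$: a single available direction supports only the top cohomology $H^d$, whereas two or more directions produce the lower reduced cohomology of the relevant half-space complex.

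I would then read surjectivity as an $\mathfrak{m}$-annihilation statement on the obstruction classes. Multiplication by $\mathbf{t}^{\mathbf{s}}$ with $\mathbf{s}\in S\setminus\{\mathbf{0}\}$ shifts degree $\mathbf{x}\mapsto\mathbf{x}+\mathbf{s}$, and demanding that every obstruction supported on $\mathbf{x}\in T$ dies after such a shift translates into $\mathbf{x}+\mathbf{s}\in S$ for all $\mathbf{s}\in S\setminus\{\mathbf{0}\}$, that is, into $T+(S\setminus\{\mathbf{0}\})\subseteq S$. Running this equivalence in both directions --- a failure of the inclusion producing a nonzero obstruction, and the inclusion forcing the comparison maps to be onto --- yields the stated criterion.

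The main obstacle, and where I would spend the bulk of the care, is the precise matching in the middle two steps. First I must justify that the entire direct limit computing $H^i_{\mathfrak{m}}$ collapses to the single transition from exponent $1$ to exponent $2$, so that exponent $2$ (and no higher power) suffices; this is the technical heart of why $2\mathbf{a}_i$ appears rather than $\mathbf{a}_i$ or $3\mathbf{a}_i$. Second, I must reconcile the symmetric combinatorial condition ``$\exists\,i\neq j$'' with the \emph{ordered} weak-regular-sequence definition of Buchsbaumness used in this paper: the weak-sequence condition $\mathfrak{m}\big((y_1,\dots,y_{k-1}):y_k\big)\subseteq (y_1,\dots,y_{k-1})$ is a priori order-dependent, and showing that its conjunction over all orderings is captured faithfully by the single symmetric inclusion $T+(S\setminus\{\mathbf{0}\})\subseteq S$ is the delicate bookkeeping that ties the two formulations together.
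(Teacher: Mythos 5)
The paper does not actually prove this statement: it is quoted verbatim from Trung (\cite[Lemma 4]{trungbuchsbaum}) and used as a black box, so there is no internal proof to compare against. Judged on its own, your proposal is a plausible reconstruction of the standard cohomological route (multigraded Koszul and local cohomology, as in Trung--Hoa and Goto), but it is an outline rather than a proof, and the two points you yourself flag as ``the technical heart'' are genuine gaps, not deferred bookkeeping. The most serious one is the opening reduction: the assertion that $\mathbb{K}[S]$ is Buchsbaum if and only if the maps $H^i(\underline{y};\mathbb{K}[S])\to H^i_{\mathfrak m}(\mathbb{K}[S])$ are surjective for $i<d$ for \emph{one} homogeneous system of parameters is not a theorem of the general St\"uckrad--Vogel theory. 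The surjectivity criterion for a single system of parameters (or its $\mathrm{Ext}$ version) is only a \emph{sufficient} condition for Buchsbaumness; the necessary-and-sufficient version quantifies over \emph{all} systems of parameters. Reducing ``all s.o.p.'s'' to the single monomial s.o.p. $y_1,\dots,y_d$ coming from the extremal rays is exactly the nontrivial content of Trung's lemma, and it uses the $\mathbb{Z}^d$-grading in an essential way; as written, your argument establishes at best one implication of the stated equivalence.

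The second gap is the step where the exponent $2$ appears: the claim that the direct system computing $H^i_{\mathfrak m}$ collapses already at the transition from $(y_1,\dots,y_d)$ to $(y_1^2,\dots,y_d^2)$ is asserted, not proved, and it is precisely where the specific shape of the set $\{\mathbf{x}\in G(S)\mid \mathbf{x}+2\mathbf{a}_i,\ \mathbf{x}+2\mathbf{a}_j\in S \text{ for some } i\neq j\}$ comes from. Relatedly, the dictionary ``one available direction supports only $H^d$, two or more give lower cohomology'' needs the actual computation of the $G(S)$-graded pieces of $H^i_{\mathfrak m}(\mathbb{K}[S])$ as reduced cohomology of the relevant degree complexes, including the degrees where all $d$ directions are available and the degrees lying outside $-\mathrm{cone}(S)\cup\mathrm{cone}(S)$; the dichotomy is more delicate than the one sentence you give. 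So the proposal identifies the right framework but closes neither implication; to turn it into a proof you would need to carry out the multigraded local-cohomology computation in full (essentially reproving Trung's lemma), or simply cite it, as the paper does.
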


\noindent
Using the above criteria and denoting:
 $$\mathcal{D}(S) = \{\mathbf{a} \in \operatorname{G}(S) \setminus S \mid \mathbf{a} + 2\mathbf{a}_i , \mathbf{a} + 2\mathbf{a}_j \in S \text{ for some } i\neq j, 1 \leq i, j \leq d\},$$
 the authors in \cite{affineII} proved the following characterization for Buchsbaum simplicial maximal projective dimension semirgoups. 

 \begin{proposition}[{\cite[Proposition 3.5]{affineII}}] \label{prop:buchsbaum}
Let $S$ be a simplicial maximal projective dimension affine semigroup in $\mathbb{N}^d$, $d > 1$. Then $S$ is Buchsbaum if
and only if $\mathcal{D}(S) = \operatorname{PF}(S)$ for every set of extremal rays of $S$.
\end{proposition}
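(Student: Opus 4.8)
The plan is to show that the equality $\mathcal{D}(S)=\operatorname{PF}(S)$ can be read off directly from Trung's criterion (Theorem~\ref{simplicialCMBcriterion}) once that criterion is rephrased in terms of $\mathcal{D}(S)$. Fix a set of extremal rays $\{\mathbf{a}_1,\dots,\mathbf{a}_d\}$ and abbreviate by $T(S)=\{\mathbf{x}\in\operatorname{G}(S)\mid \mathbf{x}+2\mathbf{a}_i,\ \mathbf{x}+2\mathbf{a}_j\in S\text{ for some }i\neq j\}$ the set occurring in that theorem. Since each extremal ray lies in $S$, we have $2\mathbf{a}_k\in S$, whence $S\subseteq T(S)$ and in fact $T(S)=S\sqcup\mathcal{D}(S)$. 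Because $S+(S\setminus\{\mathbf{0}\})\subseteq S$ holds trivially, the condition $T(S)+(S\setminus\{\mathbf{0}\})\subseteq S$ of Theorem~\ref{simplicialCMBcriterion} is equivalent to the single absorption condition
\[
\mathcal{D}(S)+(S\setminus\{\mathbf{0}\})\subseteq S.
\]
So the first step is to record that $S$ is Buchsbaum if and only if this absorption condition on $\mathcal{D}(S)$ holds.

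Next I would establish the inclusion $\operatorname{PF}(S)\subseteq\mathcal{D}(S)$, which needs no Buchsbaum hypothesis. If $\mathbf{h}\in\operatorname{PF}(S)$, then $\mathbf{h}\notin S$ and $\mathbf{h}+\mathbf{s}\in S$ for every $\mathbf{s}\in S\setminus\{\mathbf{0}\}$; taking $\mathbf{s}=2\mathbf{a}_i$ and $\mathbf{s}=2\mathbf{a}_j$ for a pair $i\neq j$, which exists since $d>1$, gives $\mathbf{h}+2\mathbf{a}_i,\ \mathbf{h}+2\mathbf{a}_j\in S$, while $\mathbf{h}=(\mathbf{h}+2\mathbf{a}_i)-2\mathbf{a}_i\in\operatorname{G}(S)$. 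Hence $\mathbf{h}\in\mathcal{D}(S)$.

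The heart of the argument, and where I expect the main obstacle, is the converse inclusion. Assuming the absorption condition $\mathcal{D}(S)+(S\setminus\{\mathbf{0}\})\subseteq S$, take $\mathbf{a}\in\mathcal{D}(S)$; I want to conclude $\mathbf{a}\in\operatorname{PF}(S)$. The absorption condition immediately yields $\mathbf{a}+\mathbf{s}\in S$ for all $\mathbf{s}\in S\setminus\{\mathbf{0}\}$, which is exactly the pseudo-Frobenius absorption property, so the remaining and more delicate point is to verify that $\mathbf{a}$ is a genuine gap, i.e.\ $\mathbf{a}\in\operatorname{cone}(S)\cap\mathbb{N}^d$ and not merely $\mathbf{a}\in\operatorname{G}(S)\setminus S$. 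Here the simplicial hypothesis is essential: since $\mathbf{a}_1,\dots,\mathbf{a}_d$ are linearly independent and span $\mathbb{Q}^d$, I would write $\mathbf{a}=\sum_k\lambda_k\mathbf{a}_k$ uniquely with $\lambda_k\in\mathbb{Q}$. From $\mathbf{a}+2\mathbf{a}_k\in S\subseteq\operatorname{cone}(S)$ for every $k$ (again using $2\mathbf{a}_k\in S\setminus\{\mathbf{0}\}$), uniqueness of the representation in the basis $\{\mathbf{a}_1,\dots,\mathbf{a}_d\}$ forces $\lambda_l+2\delta_{lk}\geq 0$ for all $k,l$; choosing for each $l$ some index $k\neq l$ (possible as $d>1$) gives $\lambda_l\geq 0$. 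Thus $\mathbf{a}\in\operatorname{cone}(S)$, and since $S\subseteq\mathbb{N}^d$ every element of $\operatorname{cone}(S)$ has nonnegative coordinates, so $\mathbf{a}\in\operatorname{G}(S)\subseteq\mathbb{Z}^d$ together with $\mathbf{a}\notin S$ yields $\mathbf{a}\in\mathcal{H}(S)$ and hence $\mathbf{a}\in\operatorname{PF}(S)$.

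Assembling these pieces, the absorption condition $\mathcal{D}(S)+(S\setminus\{\mathbf{0}\})\subseteq S$ is equivalent to $\mathcal{D}(S)\subseteq\operatorname{PF}(S)$ (the reverse implication being immediate from the very definition of $\operatorname{PF}(S)$), and combined with the unconditional $\operatorname{PF}(S)\subseteq\mathcal{D}(S)$ this is the same as $\mathcal{D}(S)=\operatorname{PF}(S)$. Chaining these equivalences with the reformulation of Theorem~\ref{simplicialCMBcriterion} from the first step shows that $S$ is Buchsbaum if and only if $\mathcal{D}(S)=\operatorname{PF}(S)$. Since Theorem~\ref{simplicialCMBcriterion} characterizes Buchsbaumness for any chosen set of extremal rays while $\operatorname{PF}(S)$ is intrinsic to $S$, the equality in fact holds simultaneously for every set of extremal rays, as claimed. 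The only real subtlety is the cone membership in the third paragraph, ensuring that elements of $\mathcal{D}(S)$ satisfying absorption are actually gaps, and this is precisely where the simplicial structure and the hypothesis $d>1$ are used.
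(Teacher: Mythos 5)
Your argument is correct. Note that the paper does not actually prove this proposition: it imports it verbatim from \cite[Proposition 3.5]{affineII}, so there is no in-paper proof to match against. Your derivation is the natural one from Theorem~\ref{simplicialCMBcriterion}: the decomposition $T(S)=S\sqcup\mathcal{D}(S)$ and the reduction of Trung's condition to $\mathcal{D}(S)+(S\setminus\{\mathbf{0}\})\subseteq S$ are sound, the inclusion $\operatorname{PF}(S)\subseteq\mathcal{D}(S)$ (using $d>1$) is exactly the observation the authors make inside the proof of Theorem~\ref{thm:H=PF}, and your cone-membership step in the third paragraph is precisely the content of the paper's Proposition~\ref{prop:outside-cone} --- in fact your argument there needs only the two indices $i\neq j$ guaranteed by the definition of $\mathcal{D}(S)$, not the full absorption hypothesis, so $\mathcal{D}(S)\subseteq\mathcal{H}(S)$ holds unconditionally for simplicial $S$, which slightly streamlines your final chain of equivalences. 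The one cosmetic remark is that the maximal projective dimension hypothesis plays no role in your proof (nor does it need to for this equivalence); it is part of the context in which \cite{affineII} states the result.
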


\medskip
For a subset $L \subseteq \mathbb{N}^d$, define the partial order $\leq_L$ on $\mathbb{N}^d$ as follows: $\mathbf{x} \leq_L \mathbf{y}$ if and only if $\mathbf{y}-\mathbf{x} \in L.$ If $A,B$ are subset of $\mathbb{N}^d$, we denote as usual $A+B=\{\mathbf{a}+\mathbf{b}\mid \mathbf{a}\in A, \mathbf{b}\in B\}$. Moreover, for $k\in \mathbb{N}$ we denote $kA=\{\mathbf{0}\}$ if $k=0$, and $kA=(k-1)A+A$. In particular, if $k\geq 1$, then $kA=\{\mathbf{a}_1+\ldots+\mathbf{a}_k\mid \mathbf{a}_j\in A\text{ for every }j\in \{1,\ldots,k\}\}$.

\medskip
When the complement of an affine semigroup $S$ in its integer cone $\mathcal{C}=\mathrm{cone}(S)\cap \mathbb{N}^d$ is finite, it is called $\mathcal{C}$-semigroup. Note that $\mathcal{C}$-semigroups are a particular case of maximal projective dimension semigroups. In fact, if $S$ is a $\mathcal{C}$-semigroup, then $\mathcal{H}(S)$ is finite, and the maximal gaps with respect to the order $\leq_S$ are pseudo-Frobenius elements.  When $\mathcal{C}=\mathbb{N}^d$, we call $S$ a \textit{generalized numerical semigroup}. While, in the case $d=1$, $S$ is called a \textit{numerical semigroup}.

\begin{remark} \rm
Let $B\subset \mathbb{N}^d$ a set a consider $\mathcal{C}_B=\mathrm{cone}(B)\cap \mathbb{N}^d$. That is, $\mathcal{C}_B$ is any affine integer cone in $\mathbb{N}^d$. Let $S\subseteq \mathbb{N}^d$ be an affine semigroup such that $S\subseteq \mathcal{C}_B$. If $\mathcal{C}_B\setminus S$ is a finite set, then $\mathcal{C}_B=\mathcal{C}=\mathrm{cone}(S)\cap \mathbb{N}^d$. In fact, it is easy to see that $\mathcal{C}\subseteq \mathcal{C}_B$. While, if $\mathbf{x}\in \mathcal{C}_B\setminus S$, then by the finiteness of $\mathcal{C}_B\setminus S$ there exists $\lambda\in \mathbb{N}$ such that $\lambda \mathbf{x}\in S$. So, $\lambda \mathbf{x}\in \mathcal{C}$ which implies $\mathbf{x}\in \mathcal{C}$.  Hence, if $S$ is an affine semigroup having finite complement in some affine integer cone, then $S$ has finite complement in the integer cone spanned by itself, in particular $S$ is a $\mathcal{C}$-semigroup. 
\end{remark}

\section{Multsets and depth $k$-regions}
The following definitions and properties regarding \emph{depth k regions} are very useful for our goal. They have been introduced in \cite{Li} in the context of generalized numerical semigroups and focusing on the order $\leq_{\mathbb{N}^d}$. By focusing on the order $\leq_\mathcal{C}$, we generalize these concepts to $\mathcal{C}$-semigroups. Recall that a set $A\subseteq \mathbb{N}^d$ is an \emph{antichain} with respect to some partial order $\leq_p$ if for all $\mathbf{a},\mathbf{b} \in A$,  $\mathbf{a} \nleq_p \mathbf{b}$ and $\mathbf{b} \nleq_p \mathbf{a}$.
 
\begin{definition}\rm
Let $\mathcal{C}$ be an integer cone and $\mathcal{M}\subseteq \mathcal{C}$. We say that $\mathcal{M}$ is a \emph{multset} of $\mathcal{C}$ if $\mathcal{M}$ is an antichain with respect to $\leq_{\mathcal{C}}$ and contains a set of extremal rays of $\mathcal{C}$. 
\end{definition}
 
Let $S\subseteq \mathcal{C}$, where $\mathcal{C}$ is an affine integer cone in $\mathbb{N}^d$. Define $\mathcal{M}(S)=\mathrm{Minimals}_{\leq_\mathcal{C}}(S\setminus \{\mathbf{0}\})$. It is not difficult to see that if $S$ is a $\mathcal{C}$-semigroup, then $\mathcal{M}(S)$ is a multset of $\mathcal{C}$. Moreover, in this case $\mathcal{M}(S)\subseteq \mathrm{msg}(S)$, which means $\mathcal{M}(S)$ is a finite set. 
 
 \begin{definition}\rm
Let $\mathcal{M}\subseteq \mathbb{N}^d$ be a multset of an integer cone $\mathcal{C}$. For every nonnegative integer $k$, define:
$$\mathcal{R}_{\leq k}(\mathcal{M})= \{\mathbf{x}\in \mathcal{C}\mid \mathbf{x}\ngeq_\mathcal{C} \mathbf{a},\text{ for every }\mathbf{a}\in k\mathcal{M}\}
$$ 
The set $\mathcal{R}_{k}(\mathcal{M})=\mathcal{R}_{\leq k}(\mathcal{M})\setminus \mathcal{R}_{\leq k-1}(\mathcal{M})$, with the convention $\mathcal{R}_{0}(\mathcal{M})=\{\mathbf{0}\}$, is called \emph{depth k-region}.
 \end{definition}

 \begin{example}\rm 
Consider the affine integer cone $\mathcal{C}=\mathrm{Span}_{\mathbb{Q}_{\geq 0}}((1,1),(3,1))\cap \mathbb{N}^2$. The set $\mathcal{M}=\{(3,3),(6,2),(3,2)\}$ is a multset of $\mathcal{C}$, in particular $\{(3,3),(6,2)\}$ is a set of (not minimal) extremal rays of $\mathcal{C}$. In this case, for instance, it is possible to compute 
$$\mathcal{R}_1(\mathcal{M})=\{(1,1),(2,1),(3,1),(2,2),(4,2),(5,2)\}.$$ Some other examples of depth $k$-region, for the cone $\mathcal{C}$ and multset $\mathcal{M}$, are shown in Figure~\ref{fig:regions}.
\begin{figure}[h!]
\includegraphics[scale=0.7]{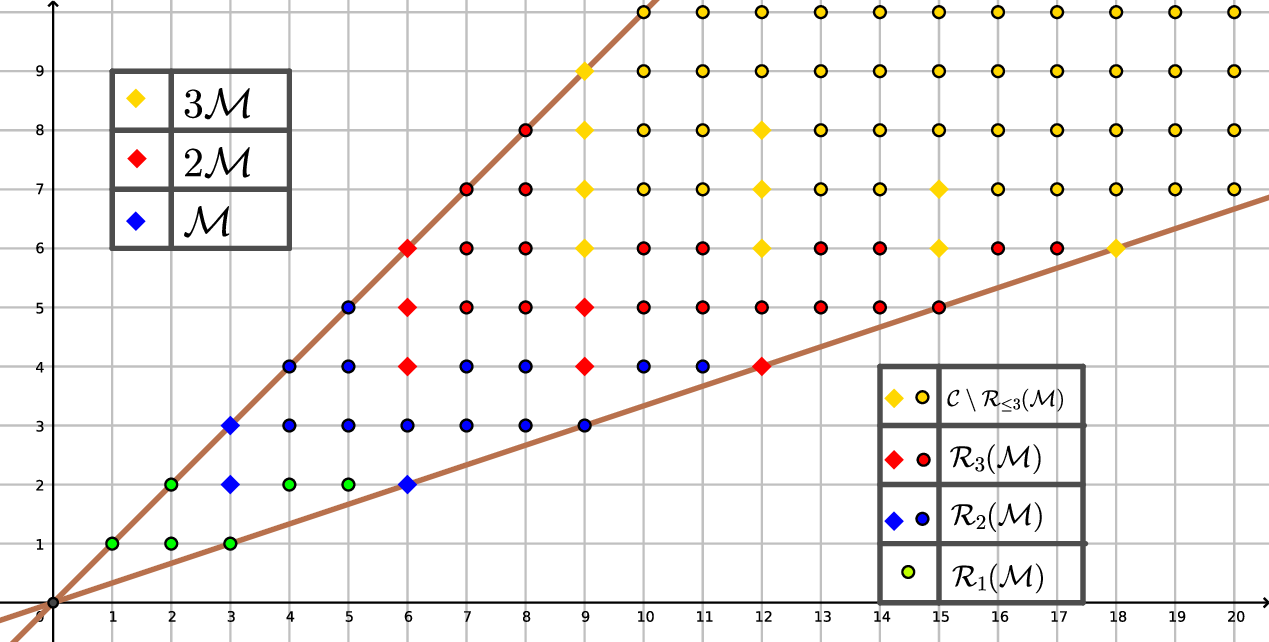}
\caption{Examples of $\mathcal{R}_k(\mathcal{M})$ in $\mathcal{C}=\mathrm{Span}_{\mathbb{Q}_{\geq 0}}((1,1),(3,1))\cap \mathbb{N}^2$}
\label{fig:regions}
\end{figure}
\end{example} 
 
 Let $\mathcal{C}\subseteq \mathbb{N}^d$ be an affine integer cone. In order to prove the next result, observe that if $\mathbf{v},\mathbf{w} \in \mathcal{C}$ such that $\mathbf{v}\leq_\mathcal{C} \mathbf{w}$, then $\mathbf{v}\leq_{\mathbb{N}^d} \mathbf{w}$. Moreover, recall that a term order $\preceq$ in $\mathbb{N}^d$ is a total order such that for all $\mathbf{v}, \mathbf{w},\mathbf{z}\in \mathbb{N}^d$ then $\mathbf{0}\preceq \mathbf{v}$ and if $\mathbf{v}\preceq \mathbf{w}$, then $\mathbf{v}+\mathbf{z}\preceq \mathbf{w}+\mathbf{z}$. It is known that if $\mathbf{v}\leq_{\mathbb{N}^d} \mathbf{w}$, then $\mathbf{v}\preceq \mathbf{w}$. In particular, if $\mathbf{v}\leq_\mathcal{C} \mathbf{w}$ then $\mathbf{v}\preceq \mathbf{w}$.
 
 \begin{proposition}\label{prop:unionR}
 Let $\mathcal{M}\subseteq \mathbb{N}^d$ be a multset of an integer cone $\mathcal{C}$. Then the following holds:
 \begin{enumerate}
 \item $\mathcal{R}_{\leq k}(\mathcal{M})\subsetneq \mathcal{R}_{\leq k+1}(\mathcal{M})$ for every $k\in \mathbb{N}\setminus \{0\}$.
 \item $\mathcal{C}=\bigsqcup_{k\geq 0}\mathcal{R}_{ k}(\mathcal{M})$ (disjoint union).
 \end{enumerate}
 \end{proposition}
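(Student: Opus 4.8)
The plan is to control the order $\leq_\mathcal{C}$ by a single strictly positive linear functional. Since $\mathcal{C}\subseteq \mathbb{N}^d$, the coordinate-sum map $\phi(\mathbf{x})=\sum_{i=1}^d x_i$ satisfies $\phi(\mathbf{x})\geq 0$ for all $\mathbf{x}\in\mathcal{C}$, with equality if and only if $\mathbf{x}=\mathbf{0}$; consequently $\mathbf{v}\leq_\mathcal{C}\mathbf{w}$ forces $\phi(\mathbf{v})\leq \phi(\mathbf{w})$, with strict inequality when $\mathbf{v}\neq\mathbf{w}$. Because $\mathcal{M}$ is an antichain containing a set of extremal rays, no element of $\mathcal{M}$ is $\mathbf{0}$, so $\delta:=\min_{\mathbf{a}\in\mathcal{M}}\phi(\mathbf{a})$ is a well-defined positive integer (the minimum of a nonempty subset of $\mathbb{N}$, whether or not $\mathcal{M}$ is finite), and every $\mathbf{a}\in k\mathcal{M}$ satisfies $\phi(\mathbf{a})\geq k\delta$. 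I read the convention as $\mathcal{R}_{\leq 0}(\mathcal{M})=\mathcal{R}_0(\mathcal{M})=\{\mathbf{0}\}$, so that $\mathcal{R}_k(\mathcal{M})=\mathcal{R}_{\leq k}(\mathcal{M})\setminus\mathcal{R}_{\leq k-1}(\mathcal{M})$ for all $k\geq 1$.

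For the inclusion in (1), I would use that $\leq_\mathcal{C}$ is transitive (it is, since $\mathcal{C}$ is closed under addition) and that any $\mathbf{b}\in(k+1)\mathcal{M}$ dominates some $\mathbf{a}\in k\mathcal{M}$: writing $\mathbf{b}=\mathbf{a}_1+\cdots+\mathbf{a}_{k+1}$ and setting $\mathbf{a}=\mathbf{a}_1+\cdots+\mathbf{a}_k$ gives $\mathbf{b}=\mathbf{a}+\mathbf{a}_{k+1}\geq_\mathcal{C}\mathbf{a}$. Hence if $\mathbf{x}\geq_\mathcal{C}\mathbf{b}$ then $\mathbf{x}\geq_\mathcal{C}\mathbf{a}$, so $\mathbf{x}\notin\mathcal{R}_{\leq k}(\mathcal{M})$ implies $\mathbf{x}\notin\mathcal{R}_{\leq k+1}(\mathcal{M})$; contrapositively $\mathcal{R}_{\leq k}(\mathcal{M})\subseteq\mathcal{R}_{\leq k+1}(\mathcal{M})$. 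For strictness I would exhibit an explicit separating element: fix $\mathbf{m}_0\in\mathcal{M}$ with $\phi(\mathbf{m}_0)=\delta$ and consider $k\mathbf{m}_0\in k\mathcal{M}$. Since $k\mathbf{m}_0\geq_\mathcal{C} k\mathbf{m}_0$, it lies outside $\mathcal{R}_{\leq k}(\mathcal{M})$. On the other hand, for every $\mathbf{b}\in(k+1)\mathcal{M}$ one has $\phi(\mathbf{b})\geq(k+1)\delta>k\delta=\phi(k\mathbf{m}_0)$, so $k\mathbf{m}_0\ngeq_\mathcal{C}\mathbf{b}$ and thus $k\mathbf{m}_0\in\mathcal{R}_{\leq k+1}(\mathcal{M})$, giving the strict inclusion for each $k\geq 1$.

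For (2), disjointness is formal: the chain $\mathcal{R}_{\leq 0}(\mathcal{M})\subseteq\mathcal{R}_{\leq 1}(\mathcal{M})\subseteq\cdots$ from part (1) makes the successive differences $\mathcal{R}_k(\mathcal{M})$ pairwise disjoint, and a telescoping identity yields $\bigcup_{k=0}^N\mathcal{R}_k(\mathcal{M})=\mathcal{R}_{\leq N}(\mathcal{M})$. It remains to prove $\bigcup_{k\geq 0}\mathcal{R}_{\leq k}(\mathcal{M})=\mathcal{C}$. The inclusion $\subseteq$ is immediate. For $\supseteq$, given $\mathbf{x}\in\mathcal{C}$ I would choose any integer $k>\phi(\mathbf{x})/\delta$; then every $\mathbf{a}\in k\mathcal{M}$ has $\phi(\mathbf{a})\geq k\delta>\phi(\mathbf{x})$, so $\mathbf{x}\geq_\mathcal{C}\mathbf{a}$ is impossible and $\mathbf{x}\in\mathcal{R}_{\leq k}(\mathcal{M})$. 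One also checks that $\mathbf{0}$ sits only in $\mathcal{R}_0(\mathcal{M})$, consistent with the convention.

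The main obstacle is the covering statement $\bigcup_{k\geq 0}\mathcal{R}_{\leq k}(\mathcal{M})=\mathcal{C}$. The term order $\preceq$ recalled before the proposition suffices to compare elements, but being merely a total order it is not coercive: there can be points of $\mathcal{C}$ that dominate all the multiples $k\mathbf{m}_0$ in $\preceq$ (as happens with a lexicographic order), so $\preceq$ alone cannot force every $\mathbf{x}$ into some $\mathcal{R}_{\leq k}(\mathcal{M})$. Introducing the additive, strictly positive functional $\phi$ supplies exactly the growth $\phi(\mathbf{a})\to\infty$ on $k\mathcal{M}$ needed to overcome this, and is the one extra ingredient beyond the formal manipulations; everything else reduces to transitivity of $\leq_\mathcal{C}$ and bookkeeping with the differences $\mathcal{R}_{\leq k}(\mathcal{M})\setminus\mathcal{R}_{\leq k-1}(\mathcal{M})$.
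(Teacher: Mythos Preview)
Your proof is correct and follows essentially the same strategy as the paper: show the inclusion by transitivity of $\leq_\mathcal{C}$, exhibit $k\mathbf{m}$ (for a suitably minimal $\mathbf{m}\in\mathcal{M}$) as a witness to strictness, and use a quantitative bound to force each $\mathbf{x}\in\mathcal{C}$ into some $\mathcal{R}_{\leq k}(\mathcal{M})$.

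The difference is in the choice of tool. The paper uses a term order $\preceq$ to pick $\mathbf{m}=\min_\preceq(\mathcal{M})$ for strictness, and then for the covering in (2) switches to a separate finiteness argument: the set $\operatorname{B}(\mathbf{x})=\{\mathbf{y}\in\mathcal{C}:\mathbf{y}\leq_\mathcal{C}\mathbf{x}\}$ is finite (being contained in the box $\{\mathbf{y}:\mathbf{y}\leq_{\mathbb{N}^d}\mathbf{x}\}$), so eventually $k\mathcal{M}$ misses it. You instead run everything through the single additive functional $\phi$, which simultaneously selects $\mathbf{m}_0$, certifies $k\mathbf{m}_0\in\mathcal{R}_{\leq k+1}\setminus\mathcal{R}_{\leq k}$, and gives the explicit covering bound $k>\phi(\mathbf{x})/\delta$. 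Your remark that a term order alone is not coercive is well taken, though the paper does not in fact rely on $\preceq$ for part (2); its covering step uses the $\leq_{\mathbb{N}^d}$-finiteness, which is morally the same growth control your $\phi$ provides. Your version is a bit more uniform and yields a quantitative depth bound for free.
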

 \begin{proof}
 (1) Let $\mathbf{x}\in \mathcal{R}_{\leq k}(\mathcal{M})$ and assume $\mathbf{x}\notin \mathcal{R}_{\leq k+1}(\mathcal{M})$. So, there exists  $\mathbf{m}_1+\cdots \mathbf{m}_{k+1}\in \mathcal{M}$ such that $\mathbf{x}\geq_\mathcal{C} \mathbf{m}_1+\cdots \mathbf{m}_{k+1}$. In particular $\mathbf{x}\geq_\mathcal{C} \mathbf{m}_1+\cdots \mathbf{m}_{k}$, which means $\mathbf{x}\notin \mathcal{R}_{\leq k}(\mathcal{M})$, a contradiction. Therefore, $\mathbf{x}\in \mathcal{R}_{\leq k+1}(\mathcal{M})$. In order to prove the proper inclusion, let $\preceq$ be a term order in $\mathbb{N}^d$ and define $\mathbf{m}=\min_\preceq(\mathcal{M})$. Let $k\in \mathbb{N}$.  It is obvious that $k\mathbf{m}\notin \mathcal{R}_{\leq k}(\mathcal{M})$, we show that $k\mathbf{m}\in \mathcal{R}_{\leq k+1}(\mathcal{M})$. If the opposite holds, then $k\mathbf{m}\geq_\mathcal{C} \mathbf{m}_1+\cdots \mathbf{m}_{k+1}$ for some $\mathbf{m}_1,\ldots,\mathbf{m}_{k+1}\in \mathcal{M}$. In particular, $k\mathbf{m}\geq_{\mathbb{N}^d} \mathbf{m}_1+\cdots \mathbf{m}_{k+1}$, but by the definition of $\mathbf{m}$ we have $k\mathbf{m}\prec \mathbf{m}_1+\cdots \mathbf{m}_{k+1}$, a contradiction. So, $k\mathbf{m}\in \mathcal{R}_{\leq k+1}(\mathcal{M})$. 
 
\noindent  (2) The inclusion ``$\supseteq$'' is trivial. For the other inclusion, let $\mathbf{x}\in \mathcal{C}$. Let us denote $\operatorname{B}(\mathbf{x})=\{\mathbf{y}\in \mathcal{C}\mid \mathbf{x}\geq_\mathcal{C} \mathbf{y}\}$. We easily deduce that $\mathbf{x}\in \mathcal{R}_{\leq k}(\mathcal{M})$ if and only if $k\mathcal{M}\cap \operatorname{B}(\mathbf{x})=\emptyset$. Observe that $\operatorname{B}(\mathbf{x})\subseteq \{\mathbf{y}\in \mathcal{C}\mid \mathbf{x}\geq_{\mathbb{N}^d} \mathbf{y}\}$. So, the set $\operatorname{B}(\mathbf{x})$ is finite. Since $\bigcup_{k\geq 0}k\mathcal{M}$ is an infinite set, we argue that there exist $\overline{k}\in \mathbb{N}$ such that $\overline{k}\mathcal{M}\cap \operatorname{B}(\textbf{x})=\emptyset$. Hence, if $\ell=\min\{k\in \mathbb{N}\mid k\mathcal{M}\cap \operatorname{B}(\textbf{x})=\emptyset\}$, we obtain $\mathbf{x}\in \mathcal{R}_{\leq \ell}(\mathcal{M})\setminus \mathcal{R}_{\leq \ell-1}(\mathcal{M})= \mathcal{R}_{\ell}(\mathcal{M})$, and the inclusion is proved.
 \end{proof}
 
 \begin{lemma}\label{lem:claim1}
 Let $\mathcal{M}\subseteq \mathbb{N}^d$ be a multset of an integer cone $\mathcal{C}$. If $\mathbf{x}\in \mathcal{R}_{k}(\mathcal{M})$ and $\mathbf{y}\in  \mathcal{R}_{\ell}(\mathcal{M})$, then $\mathbf{x}+\mathbf{y}\notin \mathcal{R}_{k+\ell-2}(\mathcal{M})$. In particular, $\mathbf{x}+\mathbf{y}\in \mathcal{R}_{n}(\mathcal{M})$ for some $n\geq k+\ell-1$.
 \end{lemma}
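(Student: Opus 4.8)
The plan is to translate membership in a depth region into a statement about dominating sums of elements of $\mathcal{M}$, and then to exploit that the integer cone $\mathcal{C}$ is closed under addition. Recall from the definition that $\mathbf{z}\in \mathcal{R}_{\leq m}(\mathcal{M})$ holds precisely when $\mathbf{z}\ngeq_{\mathcal{C}}\mathbf{a}$ for every $\mathbf{a}\in m\mathcal{M}$; equivalently, $\mathbf{z}\notin \mathcal{R}_{\leq m}(\mathcal{M})$ means $\mathbf{z}\geq_{\mathcal{C}} \mathbf{m}_1+\cdots+\mathbf{m}_m$ for some $\mathbf{m}_1,\dots,\mathbf{m}_m\in \mathcal{M}$. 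I would prove the stronger ``in particular'' assertion directly and read off the non-membership $\mathbf{x}+\mathbf{y}\notin \mathcal{R}_{k+\ell-2}(\mathcal{M})$ as an immediate consequence. (If $k=0$ then $\mathbf{x}=\mathbf{0}$ by the convention $\mathcal{R}_0(\mathcal{M})=\{\mathbf{0}\}$, and the claim is trivial; likewise if $\ell=0$, so I assume $k,\ell\geq 1$.)

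First I would unwind the hypotheses. Since $\mathbf{x}\in \mathcal{R}_k(\mathcal{M})=\mathcal{R}_{\leq k}(\mathcal{M})\setminus \mathcal{R}_{\leq k-1}(\mathcal{M})$, in particular $\mathbf{x}\notin \mathcal{R}_{\leq k-1}(\mathcal{M})$, so there exist $\mathbf{m}_1,\dots,\mathbf{m}_{k-1}\in \mathcal{M}$ with $\mathbf{x}\geq_{\mathcal{C}}\mathbf{m}_1+\cdots+\mathbf{m}_{k-1}$, that is, $\mathbf{x}-(\mathbf{m}_1+\cdots+\mathbf{m}_{k-1})\in \mathcal{C}$. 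In the same way, $\mathbf{y}\in \mathcal{R}_\ell(\mathcal{M})$ yields $\mathbf{m}'_1,\dots,\mathbf{m}'_{\ell-1}\in \mathcal{M}$ with $\mathbf{y}-(\mathbf{m}'_1+\cdots+\mathbf{m}'_{\ell-1})\in \mathcal{C}$. The crucial step is then to add these two relations: since $\mathcal{C}=\mathrm{cone}(S)\cap \mathbb{N}^d$ is closed under addition, the sum $(\mathbf{x}+\mathbf{y})-(\mathbf{m}_1+\cdots+\mathbf{m}_{k-1}+\mathbf{m}'_1+\cdots+\mathbf{m}'_{\ell-1})$ again lies in $\mathcal{C}$. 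Hence $\mathbf{x}+\mathbf{y}\geq_{\mathcal{C}}\mathbf{a}$ for some $\mathbf{a}\in (k+\ell-2)\mathcal{M}$, which by the definition of $\mathcal{R}_{\leq k+\ell-2}(\mathcal{M})$ says exactly that $\mathbf{x}+\mathbf{y}\notin \mathcal{R}_{\leq k+\ell-2}(\mathcal{M})$.

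Finally I would conclude. Because $\mathcal{R}_{k+\ell-2}(\mathcal{M})\subseteq \mathcal{R}_{\leq k+\ell-2}(\mathcal{M})$, the non-membership $\mathbf{x}+\mathbf{y}\notin \mathcal{R}_{\leq k+\ell-2}(\mathcal{M})$ immediately gives $\mathbf{x}+\mathbf{y}\notin \mathcal{R}_{k+\ell-2}(\mathcal{M})$. For the ``in particular'' part, Proposition~\ref{prop:unionR}(2) provides a unique $n$ with $\mathbf{x}+\mathbf{y}\in \mathcal{R}_n(\mathcal{M})$, and the monotonicity of the chain $\mathcal{R}_{\leq m}(\mathcal{M})$ from Proposition~\ref{prop:unionR}(1) shows $\mathcal{R}_m(\mathcal{M})\subseteq \mathcal{R}_{\leq m}(\mathcal{M})\subseteq \mathcal{R}_{\leq k+\ell-2}(\mathcal{M})$ for every $m\leq k+\ell-2$; since $\mathbf{x}+\mathbf{y}$ avoids $\mathcal{R}_{\leq k+\ell-2}(\mathcal{M})$ it avoids all of these regions, forcing $n\geq k+\ell-1$. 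I do not expect a genuine obstacle here: the entire content is the additive-closure step, and the only care required is in handling the indexing conventions (the passage between $\mathcal{R}_{\leq k}$ and $\mathcal{R}_k$ and the shift by one) and in recording that $\mathbf{x}\geq_{\mathcal{C}}\mathbf{m}_1+\cdots+\mathbf{m}_{k-1}$ indeed means the difference lies in $\mathcal{C}$.
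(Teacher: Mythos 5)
Your proof is correct and follows essentially the same route as the paper's: unwind $\mathbf{x}\notin\mathcal{R}_{\leq k-1}(\mathcal{M})$ and $\mathbf{y}\notin\mathcal{R}_{\leq \ell-1}(\mathcal{M})$ into dominating sums from $(k-1)\mathcal{M}$ and $(\ell-1)\mathcal{M}$, add them using closure of $\mathcal{C}$ under addition, and invoke Proposition~\ref{prop:unionR} for the final claim. Your explicit handling of the $k=0$ or $\ell=0$ edge cases and the spelled-out passage from $\mathcal{R}_{\leq k+\ell-2}$ to $\mathcal{R}_{k+\ell-2}$ are minor elaborations of what the paper leaves implicit.
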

 \begin{proof}
 By $\mathbf{x}\in \mathcal{R}_{k}(\mathcal{M})$ and $\mathbf{y}\in  \mathcal{R}_{\ell}(\mathcal{M})$, we have $\mathbf{x}\notin \mathcal{R}_{\leq k-1}(\mathcal{M})$ and $\mathbf{y}\notin  \mathcal{R}_{\leq \ell-1}(\mathcal{M})$. So, there exist $\mathbf{a}\in (k-1)\mathcal{M}$ and $\mathbf{b}\in (\ell-1)\mathcal{M}$ such that $\mathbf{x}\geq_\mathcal{C}\mathbf{a}$ and $\mathbf{y}\geq_\mathcal{C}\mathbf{b}$. Therefore, $\mathbf{x}+\mathbf{y}\geq_\mathcal{C} \mathbf{a}+\mathbf{b}\in (k+\ell-2)\mathcal{M}$, which means $\mathbf{x}+\mathbf{y}\notin \mathcal{R}_{k+\ell-2}(\mathcal{M})$. The last claim easily follows from Proposition~\ref{prop:unionR}.
 \end{proof}

\begin{lemma}\label{lem:claim2}
Let $\mathcal{M}\subseteq \mathbb{N}^d$ be a multset of an integer cone $\mathcal{C}$. If $\mathbf{x}\in \mathcal{R}_{k}(\mathcal{M})$ with $k\geq 2$, then there exists $\mathbf{m}\in \mathcal{M}$ such that $\mathbf{m}\leq_\mathcal{C}\mathbf{x}$ and $\mathbf{x}-\mathbf{m}\in \mathcal{R}_{k-1}(\mathcal{M})$.
\end{lemma}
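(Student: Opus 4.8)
The plan is to obtain $\mathbf{m}$ by removing a single summand from a witness to $\mathbf{x}$ having depth at least $k$. Since $\mathbf{x}\in\mathcal{R}_k(\mathcal{M})$ and $k\ge 2$, we have $\mathbf{x}\notin\mathcal{R}_{\le k-1}(\mathcal{M})$, so by the definition of $\mathcal{R}_{\le k-1}(\mathcal{M})$ there exist $\mathbf{m}_1,\dots,\mathbf{m}_{k-1}\in\mathcal{M}$ with $\mathbf{m}_1+\cdots+\mathbf{m}_{k-1}\le_\mathcal{C}\mathbf{x}$. First I would set $\mathbf{m}:=\mathbf{m}_1$ and $\mathbf{x}':=\mathbf{x}-\mathbf{m}$. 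Using that $\mathcal{C}$ is a submonoid of $\mathbb{N}^d$, so that $\le_\mathcal{C}$ is transitive and invariant under translation, the inequality $\mathbf{m}_1+\cdots+\mathbf{m}_{k-1}\le_\mathcal{C}\mathbf{x}$ gives $\mathbf{m}\le_\mathcal{C}\mathbf{x}$; hence $\mathbf{x}'\in\mathcal{C}$ is well defined and the first assertion $\mathbf{m}\le_\mathcal{C}\mathbf{x}$ holds.

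It then remains to show $\mathbf{x}'\in\mathcal{R}_{k-1}(\mathcal{M})$, which I would establish through the two containments that define this region. For the membership $\mathbf{x}'\in\mathcal{R}_{\le k-1}(\mathcal{M})$ I argue by contradiction: were $\mathbf{x}'\ge_\mathcal{C}\mathbf{b}$ for some $\mathbf{b}\in(k-1)\mathcal{M}$, then translating by $\mathbf{m}$ would give $\mathbf{x}=\mathbf{x}'+\mathbf{m}\ge_\mathcal{C}\mathbf{b}+\mathbf{m}\in k\mathcal{M}$, contradicting $\mathbf{x}\in\mathcal{R}_{\le k}(\mathcal{M})$. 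For the nonmembership $\mathbf{x}'\notin\mathcal{R}_{\le k-2}(\mathcal{M})$ I subtract $\mathbf{m}$ from the witness inequality, obtaining $\mathbf{m}_2+\cdots+\mathbf{m}_{k-1}\le_\mathcal{C}\mathbf{x}'$ with the left-hand side in $(k-2)\mathcal{M}$; thus $\mathbf{x}'$ dominates an element of $(k-2)\mathcal{M}$ and so cannot belong to $\mathcal{R}_{\le k-2}(\mathcal{M})$. Together these give $\mathbf{x}'\in\mathcal{R}_{\le k-1}(\mathcal{M})\setminus\mathcal{R}_{\le k-2}(\mathcal{M})=\mathcal{R}_{k-1}(\mathcal{M})$, as wanted.

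The hard part will be the base case $k=2$ of the nonmembership step. There $(k-2)\mathcal{M}=\{\mathbf{0}\}$, so subtracting $\mathbf{m}$ only yields the vacuous $\mathbf{x}'\ge_\mathcal{C}\mathbf{0}$, which does not by itself exclude $\mathbf{x}'$ from $\mathcal{R}_{\le 0}(\mathcal{M})=\{\mathbf{0}\}=\mathcal{R}_0(\mathcal{M})$. Hence the delicate point is to certify $\mathbf{x}'\ne\mathbf{0}$, equivalently $\mathbf{x}\ne\mathbf{m}$. Since $\mathcal{M}$ is an antichain, $\mathbf{x}'=\mathbf{0}$ can occur only when $\mathbf{x}=\mathbf{m}\in\mathcal{M}$, and I would treat this configuration separately (it does not arise in the intended applications, where the relevant $\mathbf{x}$ lies outside $\mathcal{M}$, e.g. when $\mathbf{x}$ is a gap). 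For every $k\ge 3$ the difficulty disappears automatically, because $\mathbf{m}_2+\cdots+\mathbf{m}_{k-1}$ is then a nonempty sum of nonzero vectors and forces $\mathbf{x}'\ne\mathbf{0}$.
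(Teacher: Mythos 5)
Your proof is correct and follows essentially the same route as the paper's: extract a witness $\mathbf{m}_1+\cdots+\mathbf{m}_{k-1}\leq_\mathcal{C}\mathbf{x}$ from $\mathbf{x}\notin\mathcal{R}_{\leq k-1}(\mathcal{M})$, peel off $\mathbf{m}_1$, obtain $\mathbf{x}-\mathbf{m}_1\notin\mathcal{R}_{\leq k-2}(\mathcal{M})$ from the remaining sum, and obtain $\mathbf{x}-\mathbf{m}_1\in\mathcal{R}_{\leq k-1}(\mathcal{M})$ by contradiction against $\mathbf{x}\in\mathcal{R}_{\leq k}(\mathcal{M})$. The $k=2$ edge case you flag (where $\mathbf{x}\in\mathcal{M}$ forces $\mathbf{x}-\mathbf{m}=\mathbf{0}$, which the convention $\mathcal{R}_{0}(\mathcal{M})=\{\mathbf{0}\}$ excludes from $\mathcal{R}_{1}(\mathcal{M})$) is a genuine subtlety that the paper's proof passes over silently, and your observation that it is harmless in the only application (Proposition~\ref{prop:not-PF}, where $\mathbf{x}$ is a gap and hence not in $\mathcal{M}(S)\subseteq S$) is exactly the right way to dispose of it.
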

\begin{proof}
By $\mathbf{x} \in \mathcal{R}_{k}(\mathcal{M})$ we have $\mathbf{x}\notin \mathcal{R}_{\leq k-1}(\mathcal{M})$. So, $\mathbf{x}\geq_{\mathcal{C}} \mathbf{m}_1+\cdots+\mathbf{m}_{k-1}$, with $\mathbf{m}_j\in \mathcal{M}$ for every $j\in \{1,\ldots,k-1\}$. Hence, we deduce $\mathbf{x}-\mathbf{m}_1\notin \mathcal{R}_{\leq k-2}(\mathcal{M})$. In order to conclude, it is sufficient to prove $\mathbf{x}-\mathbf{m}_1\in \mathcal{R}_{k-1}(\mathcal{M})$. If we assume the opposite, then $\mathbf{x}-\mathbf{m}_1\geq_{\mathcal{C}} \mathbf{n}_1+\cdots \mathbf{n}_{k-1}$ with $\mathbf{n}_j\in\mathcal{M}$ for every $j\in\{1,\ldots,k-1\}$. Hence, $\mathbf{x}\geq_\mathcal{C}\mathbf{m}_1+\mathbf{n}_1+\cdots+\mathbf{n}_{k-1}$, which means $\mathbf{x}\notin \mathcal{R}_{\leq k}(\mathcal{M})$, a contradiction with $\mathbf{x}\in \mathcal{R}_{k}(\mathcal{M})$. Therefore, $\mathbf{x}-\mathbf{m}_1\in \mathcal{R}_{\leq k-1}(\mathcal{M})\setminus \mathcal{R}_{\leq k-2}(\mathcal{M})=\mathcal{R}_{k-1}(\mathcal{M})$ and we can conclude.
\end{proof}

\begin{lemma}\label{lem:claim3}
Let $S\subseteq \mathbb{N}^d$ be $\mathcal{C}$-semigroup and let $\mathcal{M}=\mathcal{M}(S)$. If $\mathcal{R}_{k}(\mathcal{M})\subseteq S$ for some $k\in \mathbb{N}$, then $\mathcal{R}_{k+1}(\mathcal{M})\subseteq S$.
\end{lemma}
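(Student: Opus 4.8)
The plan is to prove the containment by peeling off a single minimal element and reducing directly to the hypothesis, so the argument is essentially one invocation of Lemma~\ref{lem:claim2}. First I would record two standing facts. Since $S$ is a $\mathcal{C}$-semigroup, the text already gives $\mathcal{M}=\mathcal{M}(S)\subseteq \mathrm{msg}(S)\subseteq S$; and of course $S$ is closed under addition. These are the only structural inputs I need about $S$ itself.

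Now fix an arbitrary $\mathbf{x}\in \mathcal{R}_{k+1}(\mathcal{M})$ and assume $k\geq 1$, so that the depth is $k+1\geq 2$. This is precisely the hypothesis required to apply Lemma~\ref{lem:claim2}, which produces an element $\mathbf{m}\in \mathcal{M}$ with $\mathbf{m}\leq_\mathcal{C}\mathbf{x}$ and $\mathbf{x}-\mathbf{m}\in \mathcal{R}_{k}(\mathcal{M})$. The relation $\mathbf{m}\leq_\mathcal{C}\mathbf{x}$ guarantees $\mathbf{x}-\mathbf{m}\in \mathcal{C}$, and by the standing assumption $\mathcal{R}_{k}(\mathcal{M})\subseteq S$ we get $\mathbf{x}-\mathbf{m}\in S$. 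Combining this with $\mathbf{m}\in \mathcal{M}\subseteq S$ and the closure of $S$ under the semigroup operation yields $\mathbf{x}=\mathbf{m}+(\mathbf{x}-\mathbf{m})\in S$. Since $\mathbf{x}$ was arbitrary, $\mathcal{R}_{k+1}(\mathcal{M})\subseteq S$, as desired.

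Because Lemma~\ref{lem:claim2} already does all the genuine work of supplying a decomposition $\mathbf{x}=\mathbf{m}+\mathbf{y}$ with $\mathbf{m}\in\mathcal{M}$ and $\mathbf{y}$ lying exactly one depth level lower, I do not expect a real obstacle once $k\geq 1$. The only step that requires care is the borderline value $k=0$: there Lemma~\ref{lem:claim2} does not apply (it needs depth at least $2$), and in fact $\mathcal{R}_1(\mathcal{M})$ typically consists of gaps of $S$, so the implication starting from $\mathcal{R}_0(\mathcal{M})=\{\mathbf{0}\}\subseteq S$ is not expected to hold verbatim. I would therefore read the statement for $k\geq 1$, which is exactly the range in which the lemma is meant to be used, namely to propagate the containment $\mathcal{R}_k(\mathcal{M})\subseteq S$ upward from a base value $k\geq 1$; if one insists on including $k=0$, that case should be isolated and checked separately rather than folded into the generator-peeling argument.
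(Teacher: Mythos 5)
Your proof is correct and is essentially identical to the paper's: both apply Lemma~\ref{lem:claim2} to write $\mathbf{x}=\mathbf{m}+(\mathbf{x}-\mathbf{m})$ with $\mathbf{m}\in\mathcal{M}\subseteq S$ and $\mathbf{x}-\mathbf{m}\in\mathcal{R}_k(\mathcal{M})\subseteq S$. Your side remark about $k=0$ is a fair observation (the paper's proof also implicitly needs $k+1\geq 2$, and the lemma is only ever invoked with $k\geq 1$), but it does not change the substance.
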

\begin{proof}
Suppose $\mathcal{R}_{k}(\mathcal{M})\subseteq S$ for some $k\in \mathbb{N}$ and let $\mathbf{x}\in \mathcal{R}_{k+1}(\mathcal{M})$. By Lemma~\ref{lem:claim2}, we have $\mathbf{x}-\mathbf{m}\in \mathcal{R}_{k}(\mathcal{M})\subseteq S$ for some $\mathbf{m}\in \mathcal{M}$. Hence $\mathbf{x}=(\mathbf{x}-\mathbf{m})+\mathbf{m}\in S$, and the result is proved.
\end{proof}

Let $S\subseteq \mathbb{N}^d$ be a $\mathcal{C}$-semigroup and $\mathcal{M}=\mathcal{M}(S)$. Since $\mathcal{C}\setminus S$ is finite, by Proposition~\ref{prop:unionR} there exists $k\in \mathbb{N}$ such that $\mathcal{R}_{k}(\mathcal{M})\subseteq S$ and in such a case $\mathcal{R}_{\ell}(\mathcal{M})\subseteq S$ for every $\ell \geq k$. Considering the integer $q=\min\{k\in\mathbb{N}\mid \mathcal{R}_{k+1}(\mathcal{M})\subseteq S\}$, we define $\mathrm{Depth}(S)=q$.

\begin{proposition}\label{prop:not-PF}
Let $S\subseteq \mathbb{N}^d$ be a $\mathcal{C}$-semigroup. Suppose $\mathcal{H}(S)\neq \operatorname{PF}(S)$. Then there exists $\mathbf{h}\in \mathcal{H}(S)\setminus \operatorname{PF}(S)$ such that $\mathbf{h}+k\mathbf{s}\in S$ for all $\mathbf{s}\in S$ and for every $k > 1$.
\end{proposition}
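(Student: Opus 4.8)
The plan is to work entirely with the depth-region machinery of Section~2, applied to the multset $\mathcal{M}=\mathcal{M}(S)$, exploiting the additivity of depth recorded in Lemma~\ref{lem:claim1}. First I would reduce the assertion to the single case $k=2$: once I know $\mathbf{h}+2\mathbf{s}\in S$ for every $\mathbf{s}\in S\setminus\{\mathbf{0}\}$, then for $k\geq 2$ I can write $\mathbf{h}+k\mathbf{s}=(\mathbf{h}+2\mathbf{s})+(k-2)\mathbf{s}$, and since $(k-2)\mathbf{s}\in S$ and $S$ is closed under addition, the whole sum lies in $S$. Throughout I write $q=\mathrm{Depth}(S)$, so that $\mathcal{R}_{\ell}(\mathcal{M})\subseteq S$ for every $\ell>q$ by Lemma~\ref{lem:claim3}, while $\mathcal{R}_{q}(\mathcal{M})\not\subseteq S$.

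Two preliminary observations drive the argument. The first is that every nonzero $\mathbf{s}\in S$ lies in $\mathcal{R}_{\ell}(\mathcal{M})$ for some $\ell\geq 2$: indeed $\mathbf{s}$ dominates some $\mathbf{m}\in\mathcal{M}$ in $\leq_{\mathcal{C}}$, so $\mathbf{s}\notin\mathcal{R}_{\leq 1}(\mathcal{M})$, and by Proposition~\ref{prop:unionR}(2) it sits in a unique region of depth at least $2$. The second is that the hypothesis $\mathcal{H}(S)\neq\operatorname{PF}(S)$ forces $q\geq 2$. I would verify the contrapositive: suppose $q\leq 1$. Every gap then has depth at most $q\leq 1$, and since $\mathcal{R}_{0}(\mathcal{M})=\{\mathbf{0}\}\subseteq S$, each gap $\mathbf{h}$ in fact lies in $\mathcal{R}_{1}(\mathcal{M})$. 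For such $\mathbf{h}$ and any $\mathbf{s}\in S\setminus\{\mathbf{0}\}$, of depth $\ell\geq 2$ by the first observation, Lemma~\ref{lem:claim1} places $\mathbf{h}+\mathbf{s}$ in $\mathcal{R}_{N}(\mathcal{M})$ with $N\geq 1+\ell-1=\ell\geq 2>q$, whence $\mathbf{h}+\mathbf{s}\in S$. Thus every gap would be pseudo-Frobenius, i.e.\ $\mathcal{H}(S)=\operatorname{PF}(S)$, contradicting the hypothesis.

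With $q\geq 2$ in hand I would construct the desired gap by descending one level from a deepest region still containing a gap. Since $\mathcal{R}_{q}(\mathcal{M})\not\subseteq S$, choose a gap $\mathbf{f}\in\mathcal{R}_{q}(\mathcal{M})\cap\mathcal{H}(S)$. Because $q\geq 2$, Lemma~\ref{lem:claim2} yields $\mathbf{m}\in\mathcal{M}$ with $\mathbf{m}\leq_{\mathcal{C}}\mathbf{f}$ and $\mathbf{h}:=\mathbf{f}-\mathbf{m}\in\mathcal{R}_{q-1}(\mathcal{M})$. This $\mathbf{h}$ has all three required features. It is a gap, for if $\mathbf{h}\in S$ then $\mathbf{f}=\mathbf{h}+\mathbf{m}\in S$ (here $\mathbf{h}\neq\mathbf{0}$, otherwise $\mathbf{f}=\mathbf{m}\in S$), contradicting $\mathbf{f}\in\mathcal{H}(S)$. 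It is not pseudo-Frobenius, since $\mathbf{m}\in S\setminus\{\mathbf{0}\}$ while $\mathbf{h}+\mathbf{m}=\mathbf{f}\notin S$. Finally, for any $\mathbf{s}\in S\setminus\{\mathbf{0}\}$, writing $\mathbf{s}\in\mathcal{R}_{\ell}(\mathcal{M})$ with $\ell\geq 2$, Lemma~\ref{lem:claim1} gives $2\mathbf{s}\in\mathcal{R}_{m}(\mathcal{M})$ with $m\geq 2\ell-1\geq 3$, and then $\mathbf{h}+2\mathbf{s}\in\mathcal{R}_{N}(\mathcal{M})$ with $N\geq(q-1)+m-1\geq q+1$; since $N>q$ this places $\mathbf{h}+2\mathbf{s}$ in $S$, which is exactly the case $k=2$.

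The main obstacle is choosing the correct gap $\mathbf{h}$. The naive choice of a gap of maximal depth is useless, because the depth bookkeeping above shows that any gap in $\mathcal{R}_{q}(\mathcal{M})$ is automatically pseudo-Frobenius and so fails the requirement $\mathbf{h}\notin\operatorname{PF}(S)$. The key idea is therefore to descend exactly one level by subtracting a minimal element $\mathbf{m}\in\mathcal{M}$ from a deepest gap $\mathbf{f}$: this single step both destroys the pseudo-Frobenius property (it is $\mathbf{m}$ itself that witnesses $\mathbf{h}+\mathbf{m}\notin S$) and keeps $\mathbf{h}$ in $\mathcal{R}_{q-1}(\mathcal{M})$, which is deep enough that adding $2\mathbf{s}$, a net depth gain of at least $2$ by Lemma~\ref{lem:claim1}, pushes the result past depth $q$ and hence into $S$. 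Getting this balance right, and in particular verifying $q\geq 2$ so that Lemma~\ref{lem:claim2} applies, is precisely where the hypothesis $\mathcal{H}(S)\neq\operatorname{PF}(S)$ enters.
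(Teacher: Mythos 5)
Your proposal is correct and follows essentially the same route as the paper's proof: reduce to $k=2$, show $\mathrm{Depth}(S)\geq 2$, pick a gap $\mathbf{h}'$ in the deepest region $\mathcal{R}_{q}(\mathcal{M})$ still meeting $\mathcal{H}(S)$, descend one level via Lemma~\ref{lem:claim2} to get $\mathbf{h}=\mathbf{h}'-\mathbf{m}\in\mathcal{R}_{q-1}(\mathcal{M})$, and push $\mathbf{h}+2\mathbf{s}$ past depth $q$ with Lemmas~\ref{lem:claim1} and~\ref{lem:claim3}. Your two small deviations --- proving $q\geq 2$ by contrapositive rather than via a pseudo-Frobenius element dominating some $\mathbf{m}\in\mathcal{M}(S)$, and computing the depth of $\mathbf{h}+2\mathbf{s}$ directly instead of first decomposing $\mathbf{s}=\mathbf{n}+\mathbf{c}$ --- are cosmetic and equally valid.
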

\begin{proof}
It is sufficient to prove the result for $k=2$. Let $\mathbf{g}\in \mathcal{H}(S)\setminus \operatorname{PF}(S)$. So, there exists $\mathbf{s}\in S\setminus \{\mathbf{0}\}$ such that $\mathbf{g}+\mathbf{s}=\mathbf{f}\in \operatorname{PF}(S)$. In particular, there exists $\mathbf{n}\in \mathcal{M}(S)$ such that $\mathbf{n}\leq_\mathcal{C}\mathbf{s}\leq_\mathcal{C} \mathbf{f}$. Let $q=\mathrm{Depth}(S)$. Since $\mathbf{f}\geq_\mathcal{C} \mathbf{n}$, we have $q\geq 2$. Let us denote $\mathcal{M}=\mathcal{M}(S)$. By definition of $q$, we have $\mathcal{R}_{q+1}(\mathcal{M})\subseteq S$ and $\mathcal{R}_{q}(\mathcal{M})\cap \mathcal{H}(S)\neq \emptyset$. Let $\mathbf{h}'\in \mathcal{R}_{q}(\mathcal{M})\cap \mathcal{H}(S)$. Since $q\geq 2$, by Lemma~\ref{lem:claim2} there exists $\mathbf{m}'\in \mathcal{M}(S)$ such that $\mathbf{m}'\leq_\mathcal{C} \mathbf{h}'$ and $\mathbf{h}'-\mathbf{m}'\in \mathcal{R}_{q-1}(\mathcal{M})$. Consider that $\mathbf{h}=\mathbf{h}'-\mathbf{m}'\in \mathcal{H}(S)\setminus \operatorname{PF}(S)$. Moreover, for every $\mathbf{m}\in \mathcal{M}$, observing that $\mathbf{m}\in \mathcal{R}_{2}(\mathcal{M})$,  by $\mathbf{h}'-\mathbf{m}'\in \mathcal{R}_{q-1}(\mathcal{M})$ and Lemma~\ref{lem:claim1} we obtain $\mathbf{h}+2\mathbf{m}=(\mathbf{h}'-\mathbf{m}')+\mathbf{m}+\mathbf{m}\in \mathcal{R}_{n}(\mathcal{M})$ with $n>q$. So, for all $\mathbf{m}\in \mathcal{M}(S)$, we proved that $\mathbf{h}+2\mathbf{m}\in \mathcal{R}_{n}(\mathcal{M})$ with $n>q$. Now let $\mathbf{s}\in S$. We can assume $\mathbf{s}=\mathbf{n}+\mathbf{c}$ with $\mathbf{n}\in \mathcal{M}$ and $\mathbf{c}\in \mathcal{C}$. Hence, we have $\mathbf{h}+2\mathbf{s}=\mathbf{h}+2\mathbf{n}+2\mathbf{c}\in \mathcal{R}_{n}(\mathcal{M})$ with $n>q$. Since $\mathcal{R}_{q+1}(\mathcal{M})\subseteq S$, by Lemma~\ref{lem:claim3} we obtain $\mathbf{h}+2\mathbf{s}\in S$ and this concludes the proof. 
\end{proof}

\section{Buchsbaum simplicial $\mathcal{C}$-semigroups}

\medskip
Let S be a simplicial affine semigroup minimally generated by $\{\mathbf{a}_1,\ldots, \mathbf{a}_d , \mathbf{a}_{d+1},\ldots, \mathbf{a}_n \}\subseteq \mathbb{N}^d$,
and let $\operatorname{G}(S)$ be the group generated by $S$ in $\mathbb{Z}^d$ . Assume that $\{\mathbf{a}_1,\ldots, \mathbf{a}_d \}$ is a set of extremal rays of
$S$. Recall the set
$$\mathcal{D}(S) = \{\mathbf{a} \in \operatorname{G}(S) \setminus S \mid \mathbf{a} + 2\mathbf{a}_i , \mathbf{a} + 2\mathbf{a}_j \in S \text{ for some } i\neq j, 1 \leq i, j \leq d\}.$$

Actually, if $S$ is a simplicial affine semigroup, then $\mathcal{D}(S)\subseteq \mathcal{H}(S)$, as deduced by the following result.

\begin{proposition}\label{prop:outside-cone}
    Let $\mathcal{C}$ be a simplicial rational polyhedral cone inside $\mathbb{Q}^d_{\geq 0}$ with extremal rays $\{\mathbf{a}_1, \ldots, \mathbf{a}_d\}$. Let $\mathbf{x} \in \mathbb{Q}^d$ such that $\mathbf{x} + k\mathbf{a}_i, \mathbf{x}+ k\mathbf{a}_j \in \mathcal{C}$ for some $k \geq 1$ and $i \neq j$, then $\mathbf{x} \in \mathcal{C}$. 
\end{proposition}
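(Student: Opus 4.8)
The plan is to exploit the fact that a full-dimensional simplicial cone is cut out by coordinate inequalities relative to the basis furnished by its extremal rays. Since $\mathcal{C}$ is simplicial and spans $\mathbb{Q}^d$, the extremal rays $\mathbf{a}_1, \ldots, \mathbf{a}_d$ are linearly independent and therefore form a $\mathbb{Q}$-basis of $\mathbb{Q}^d$. Hence every $\mathbf{x} \in \mathbb{Q}^d$ admits a unique expression
\[
\mathbf{x} = \sum_{l=1}^d \lambda_l \mathbf{a}_l, \qquad \lambda_l \in \mathbb{Q},
\]
and by definition of the cone one has $\mathbf{x} \in \mathcal{C}$ if and only if $\lambda_l \geq 0$ for every $l \in \{1,\ldots,d\}$. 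So the first step is simply to fix these coordinates $\lambda_l$ of $\mathbf{x}$ and reduce the whole statement to a claim about their signs.

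Next I would translate the two hypotheses into these coordinates. Adding $k\mathbf{a}_i$ only changes the $i$-th coordinate, so
\[
\mathbf{x} + k\mathbf{a}_i = (\lambda_i + k)\mathbf{a}_i + \sum_{l \neq i} \lambda_l \mathbf{a}_l,
\]
and the membership $\mathbf{x} + k\mathbf{a}_i \in \mathcal{C}$ forces $\lambda_l \geq 0$ for all $l \neq i$ (the $i$-th coordinate $\lambda_i + k$ carries no information about $\lambda_i$ since $k \geq 1$ could absorb a negative value). Symmetrically, $\mathbf{x} + k\mathbf{a}_j \in \mathcal{C}$ forces $\lambda_l \geq 0$ for all $l \neq j$.

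The key combinatorial observation, and the only place where the assumption $i \neq j$ is used, is that these two families of inequalities jointly cover all indices: every $l \in \{1,\ldots,d\}$ satisfies $l \neq i$ or $l \neq j$, so each $\lambda_l$ is nonnegative by at least one of the two hypotheses. In particular $\lambda_i \geq 0$ comes from the second condition and $\lambda_j \geq 0$ from the first. Therefore all coordinates of $\mathbf{x}$ are nonnegative, which gives $\mathbf{x} \in \mathcal{C}$, as desired. I do not anticipate a genuine obstacle here: once the problem is phrased in the coordinates dual to the extremal rays the argument is purely a sign bookkeeping, and the hypothesis $i \neq j$ is exactly what guarantees the union of the two index sets is everything. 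The only point requiring a word of care is justifying the uniqueness of the coordinate representation, which is immediate from the linear independence of the extremal rays of a simplicial cone.
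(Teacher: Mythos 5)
Your proof is correct. It is the dual formulation of the paper's argument: the paper writes $\mathcal{C}$ as an intersection of half-spaces $H_1^+\cap\cdots\cap H_d^+$ and argues by contradiction that a facet functional $h_q$ negative on $\mathbf{x}$ stays negative on $\mathbf{x}+k\mathbf{a}_{l}$ for the $d-1$ rays $\mathbf{a}_l$ lying on that facet, so at most one translate can land in $\mathcal{C}$; you instead use the V-description, expanding $\mathbf{x}=\sum_l\lambda_l\mathbf{a}_l$ in the basis of extremal rays and reading off the signs of the $\lambda_l$ directly. The two are the same linear algebra, since for a simplicial cone the coordinate functional $\lambda_l$ is a positive multiple of the facet functional vanishing on the other $d-1$ rays. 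What your version buys is self-containedness and a direct (non-contradiction) argument: you do not need to invoke the half-space decomposition or the fact that each facet of a simplicial cone contains exactly $d-1$ extremal rays, only the linear independence of $\{\mathbf{a}_1,\ldots,\mathbf{a}_d\}$, which is exactly what ``simplicial with $d$ rays in $\mathbb{Q}^d$'' gives. The one step you rightly flag --- that membership in $\mathcal{C}$ is equivalent to nonnegativity of all coordinates --- does rest on uniqueness of the representation, and your appeal to linear independence settles it.
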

\begin{proof}
    Since $\mathcal{C}$ is a simplicial polyhedral cone, we can write it as the intersection of $d$-closed halfspaces (see, for instance, \cite[Chapter 1]{bruns-gub}). Let $\mathcal{C} = H_1^+ \cap \ldots \cap H_d^+.$ Let $h_i(x) = 0$ defines the supporting hyperplane corresponding to the halfspace $H_i^+$. Suppose $\mathbf{x} \notin \mathcal{C}$, then there exists $q \in [1,d]$ such that $h_q(\mathbf{x}) < 0$. Since $\mathcal{C}$ is a simplicial, we know that each facet of $\mathcal{C}$ contains exactly $d-1$ extremal rays. Therefore, there exist $\{\mathbf{a}_{i_1}, \ldots, \mathbf{a}_{i_{d-1}}\} \subseteq \{\mathbf{a}_1, \ldots, \mathbf{a}_d\}$ such that $h_q(\mathbf{a}_{i_j}) = 0$ for all $j \in [1,d-1]$. Therefore, we have $h_q(\mathbf{x} + k\mathbf{a}_{i_j})= h_q(\mathbf{x}) + kh_q(\mathbf{a}_{i_j})=h_q(\mathbf{x}) < 0$ for all $j \in[1,d-1]$. Thus, $\mathbf{x} + k\mathbf{a}_{i_j} \notin \mathcal{C}$ for all $j \in [1,d-1]$. This is a contradiction to $\mathbf{x} + k\mathbf{a}_i, \mathbf{x}+ k\mathbf{a}_j \in \mathcal{C}$ and $i \neq j$. Hence, $\mathbf{x} \in \mathcal{C}$. 
\end{proof}

Now, having Proposition~\ref{prop:buchsbaum} in mind and observing that every $\mathcal{C}$-semigroup is a maximal projective dimension semigroup, we give a more precise characterization for simplicial $\mathcal{C}$-semigroups in terms of pseudo-Frobenius elements and gapset.

\begin{theorem}\label{thm:H=PF}
    Let $S\subseteq \mathbb{N}^d$ be a simplicial $\mathcal{C}$-semigroup with $d>1$. Then $S$ is Buchsbaum if and only if $\mathcal{H}(S)=\operatorname{PF}(S)$.
\end{theorem}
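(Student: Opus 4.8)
The plan is to reduce the statement to Proposition~\ref{prop:buchsbaum}. Since every $\mathcal{C}$-semigroup is a maximal projective dimension semigroup, that proposition tells us that $S$ is Buchsbaum if and only if $\mathcal{D}(S)=\operatorname{PF}(S)$ (for every set of extremal rays, which we fix to be $\{\mathbf{a}_1,\ldots,\mathbf{a}_d\}$). Thus the theorem will follow once we control $\mathcal{D}(S)$ relative to both $\operatorname{PF}(S)$ and $\mathcal{H}(S)$. The engine of the whole argument is the chain of inclusions $\operatorname{PF}(S)\subseteq\mathcal{D}(S)\subseteq\mathcal{H}(S)$, which I would establish first and which holds for every simplicial $\mathcal{C}$-semigroup.

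To prove the inclusions, I would argue as follows. For $\mathcal{D}(S)\subseteq\mathcal{H}(S)$, take $\mathbf{a}\in\mathcal{D}(S)$, so that $\mathbf{a}+2\mathbf{a}_i,\mathbf{a}+2\mathbf{a}_j\in S\subseteq\mathrm{cone}(S)$ for some $i\neq j$. Applying Proposition~\ref{prop:outside-cone} with $k=2$ to the simplicial cone $\mathrm{cone}(S)$ forces $\mathbf{a}\in\mathrm{cone}(S)$; since $\mathbf{a}\in\operatorname{G}(S)\subseteq\mathbb{Z}^d$ and lies in $\mathrm{cone}(S)\subseteq\mathbb{Q}^d_{\geq 0}$, it has nonnegative integer entries, so $\mathbf{a}\in\mathrm{cone}(S)\cap\mathbb{N}^d=\mathcal{C}$, and as $\mathbf{a}\notin S$ we conclude $\mathbf{a}\in\mathcal{C}\setminus S=\mathcal{H}(S)$. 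For $\operatorname{PF}(S)\subseteq\mathcal{D}(S)$, take $\mathbf{f}\in\operatorname{PF}(S)$; since each $2\mathbf{a}_i\in S\setminus\{\mathbf{0}\}$, the defining property of pseudo-Frobenius elements gives $\mathbf{f}+2\mathbf{a}_i\in S$ for every $i$, and writing $\mathbf{f}=(\mathbf{f}+2\mathbf{a}_i)-2\mathbf{a}_i$ shows $\mathbf{f}\in\operatorname{G}(S)\setminus S$, whence $\mathbf{f}\in\mathcal{D}(S)$.

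With the chain $\operatorname{PF}(S)\subseteq\mathcal{D}(S)\subseteq\mathcal{H}(S)$ in hand, the backward implication is immediate: if $\mathcal{H}(S)=\operatorname{PF}(S)$, the chain collapses to $\mathcal{D}(S)=\operatorname{PF}(S)$, and Proposition~\ref{prop:buchsbaum} yields that $S$ is Buchsbaum. For the forward implication I would argue by contraposition. Assume $\mathcal{H}(S)\neq\operatorname{PF}(S)$. The decisive input is Proposition~\ref{prop:not-PF}, which produces a gap $\mathbf{h}\in\mathcal{H}(S)\setminus\operatorname{PF}(S)$ with $\mathbf{h}+k\mathbf{s}\in S$ for all $\mathbf{s}\in S$ and every $k>1$. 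Taking $k=2$ and $\mathbf{s}=\mathbf{a}_i$ gives $\mathbf{h}+2\mathbf{a}_i\in S$ for all $i$, so $\mathbf{h}$ satisfies the defining condition of $\mathcal{D}(S)$; together with $\mathbf{h}\in\operatorname{G}(S)\setminus S$ this yields $\mathbf{h}\in\mathcal{D}(S)$. As $\mathbf{h}\notin\operatorname{PF}(S)$, we obtain $\mathcal{D}(S)\neq\operatorname{PF}(S)$, so $S$ is not Buchsbaum by Proposition~\ref{prop:buchsbaum}, completing the contrapositive.

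The only genuinely nontrivial ingredient is Proposition~\ref{prop:not-PF}, whose weight rests entirely on the depth-region machinery developed in Section~2: it is that machinery which guarantees a gap that is simultaneously non-pseudo-Frobenius and yet ``deep enough'' that adding twice any minimal generator already lands in $S$. Once such an element is produced, the membership $\mathbf{h}\in\mathcal{D}(S)\setminus\operatorname{PF}(S)$ — precisely the obstruction to $\mathcal{D}(S)=\operatorname{PF}(S)$ — drops out directly, so I expect no further difficulty; the remaining verifications (the two inclusions and the collapse of the chain) are routine.
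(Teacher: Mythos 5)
Your proposal is correct and follows essentially the same route as the paper: both directions reduce to Proposition~\ref{prop:buchsbaum} via the chain $\operatorname{PF}(S)\subseteq\mathcal{D}(S)\subseteq\mathcal{H}(S)$, with Proposition~\ref{prop:outside-cone} giving $\mathcal{D}(S)\subseteq\mathcal{H}(S)$ and Proposition~\ref{prop:not-PF} supplying the element of $\mathcal{D}(S)\setminus\operatorname{PF}(S)$ when $\mathcal{H}(S)\neq\operatorname{PF}(S)$. Your write-up even spells out the inclusion $\operatorname{PF}(S)\subseteq\mathcal{D}(S)$, which the paper merely asserts.
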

\begin{proof}
  Assume $\{\mathbf{a}_1,\ldots,\mathbf{a}_d\}$ is a set of extremal rays of $S$. Suppose $S$ is Buchsbaum. Then, using Proposition~\ref{prop:buchsbaum}, we have $\operatorname{PF}(S)=\mathcal{D}(S)$. Assuming $\mathcal{H}(S)\neq \operatorname{PF}(S)$, by Proposition~\ref{prop:not-PF} there exists $\mathbf{h}\in \mathcal{H}(S)\setminus \operatorname{PF}(S)$ such that $\mathbf{h}+k\mathbf{s}\in S$ for all $\mathbf{s}\in S$ and for all $k>1$. Hence, since $\{\mathbf{a}_1,\ldots,\mathbf{a}_d\}\subseteq S$, we obtain $\mathbf{h}\in \mathcal{D}(S)\setminus \operatorname{PF}(S)$, a contradiction. Therefore, $\mathcal{H}(S)=\operatorname{PF}(S)$. Now, suppose $\mathcal{H}(S)=\operatorname{PF}(S)$. Observe that $\operatorname{PF}(S)\subseteq \mathcal{D}(S)$. Let us prove that $\mathcal{D}(S)\subseteq \mathcal{H}(S)$. Let $\mathbf{a}\in \mathcal{D}(S)$. Since $\mathbf{a}+2\mathbf{a}_i$, $\mathbf{a}+2\mathbf{a}_j\in S\subseteq \mathcal{C}$ for some $i\neq j$, by Proposition~\ref{prop:outside-cone} we obtain $\mathbf{a}\in \mathcal{C}$. Moreover, by the definition of $\mathcal{D}(S)$, we have $\mathbf{a}\notin S$. So, $\mathbf{a}\in \mathcal{C}\setminus S=\mathcal{H}(S)$. Hence, $\mathcal{D}(S)\subseteq \mathcal{H}(S)$ and we have $\mathcal{H}(S)=\operatorname{PF}(S)\subseteq \mathcal{D}(S)\subseteq \mathcal{H}(S)$, which implies $\operatorname{PF}(S)=\mathcal{D}(S)$. So, $S$ is Buchsbaum.
\end{proof}

\begin{proposition}
   Let $S\subseteq \mathbb{N}^d$ be a Buchsbaum simplicial $\mathcal{C}$-semigroup, and $\mathbf{x} \in \mathcal{M}(S)$. Then $S \setminus \{\mathbf{x}\}$ is also Buchsbaum simplicial $\mathcal{C}$-semigroup. 
\end{proposition}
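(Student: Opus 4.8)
The plan is to set $T := S \setminus \{\mathbf{x}\}$, first check that $T$ is again a simplicial $\mathcal{C}$-semigroup with the same integer cone $\mathcal{C}$, and then apply the criterion of Theorem~\ref{thm:H=PF} by establishing $\mathcal{H}(T) = \operatorname{PF}(T)$.

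I would begin with the monoid structure of $T$. Since $\mathbf{x} \in \mathcal{M}(S)$ is a $\leq_\mathcal{C}$-minimal element of $S \setminus \{\mathbf{0}\}$, it is indecomposable: if $\mathbf{x} = \mathbf{a} + \mathbf{b}$ with $\mathbf{a}, \mathbf{b} \in S \setminus \{\mathbf{0}\}$, then $\mathbf{x}-\mathbf{a} = \mathbf{b} \in S \subseteq \mathcal{C}$ gives $\mathbf{a} \leq_\mathcal{C} \mathbf{x}$ with $\mathbf{a} \neq \mathbf{x}$, contradicting minimality. Hence the sum of two nonzero elements of $S$ is never equal to $\mathbf{x}$, so $T$ is closed under addition and contains $\mathbf{0}$; that is, $T$ is a submonoid of $\mathbb{N}^d$.

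Next I would pin down the ambient data. Because $T \subseteq \mathcal{C}$ and $\mathcal{C} \setminus T = \mathcal{H}(S) \cup \{\mathbf{x}\}$ is finite, the Remark of Section~1 gives $\mathrm{cone}(T) \cap \mathbb{N}^d = \mathcal{C} = \mathrm{cone}(S) \cap \mathbb{N}^d$; and since a rational polyhedral cone is generated by its lattice points, $\mathrm{cone}(T) = \mathrm{cone}(S)$. Moreover $2\mathbf{x}, 3\mathbf{x} \in T$, so $\mathbf{x} = 3\mathbf{x} - 2\mathbf{x} \in \operatorname{G}(T)$, whence $\operatorname{G}(T) = \operatorname{G}(S)$ has rank $d$. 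Thus $T$ has the same $d$ extremal rays as $S$ and is a simplicial $\mathcal{C}$-semigroup with $\mathcal{H}(T) = \mathcal{H}(S) \cup \{\mathbf{x}\}$. Finally I would verify the Buchsbaum criterion: the inclusion $\operatorname{PF}(T) \subseteq \mathcal{H}(T)$ is automatic, so it suffices to prove $\mathcal{H}(T) \subseteq \operatorname{PF}(T)$. For $\mathbf{x}$ itself, given $\mathbf{t} \in T \setminus \{\mathbf{0}\}$ we have $\mathbf{x} + \mathbf{t} \in S$ and $\mathbf{x} + \mathbf{t} \neq \mathbf{x}$, so $\mathbf{x} + \mathbf{t} \in T$, giving $\mathbf{x} \in \operatorname{PF}(T)$. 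For $\mathbf{h} \in \mathcal{H}(S)$, Buchsbaumness of $S$ and Theorem~\ref{thm:H=PF} give $\mathcal{H}(S) = \operatorname{PF}(S)$, so $\mathbf{h} + \mathbf{t} \in S$ for every $\mathbf{t} \in T \setminus \{\mathbf{0}\} \subseteq S \setminus \{\mathbf{0}\}$; if $\mathbf{h} + \mathbf{t} = \mathbf{x}$ then $\mathbf{t} \leq_\mathcal{C} \mathbf{x}$ with $\mathbf{t} \in S \setminus \{\mathbf{0}\}$ and $\mathbf{t} \neq \mathbf{x}$, contradicting the $\leq_\mathcal{C}$-minimality of $\mathbf{x}$; hence $\mathbf{h} + \mathbf{t} \in T$ and $\mathbf{h} \in \operatorname{PF}(T)$. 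Therefore $\mathcal{H}(T) = \operatorname{PF}(T)$, and Theorem~\ref{thm:H=PF} shows $T$ is Buchsbaum.

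The main obstacle, really the only nonformal point, is the last step ruling out $\mathbf{h} + \mathbf{t} = \mathbf{x}$: this is exactly where the hypothesis $\mathbf{x} \in \mathcal{M}(S)$ (as opposed to an arbitrary element of $S$) is essential, entering through its $\leq_\mathcal{C}$-minimality. The structural checks that $T$ is a simplicial $\mathcal{C}$-semigroup are routine once the Remark and the identity $\mathbf{x} = 3\mathbf{x} - 2\mathbf{x}$ are used.
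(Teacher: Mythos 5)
Your proof is correct and follows essentially the same route as the paper: verify that $S\setminus\{\mathbf{x}\}$ is again a simplicial $\mathcal{C}$-semigroup, note $\mathcal{H}(S\setminus\{\mathbf{x}\})=\mathcal{H}(S)\cup\{\mathbf{x}\}$, and use the $\leq_\mathcal{C}$-minimality of $\mathbf{x}$ to show every element of this set is pseudo-Frobenius, concluding via Theorem~\ref{thm:H=PF}. You simply spell out more carefully the routine checks (closure of $S\setminus\{\mathbf{x}\}$ under addition, equality of cones and groups, and the exclusion of $\mathbf{h}+\mathbf{t}=\mathbf{x}$) that the paper leaves as observations.
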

\begin{proof}
   Observe that when $\mathbf{x} \in \mathcal{M}(S)$, then $S \setminus \{\mathbf{x}\}$ is also a submonoid with finite complement in its rational cone, and thus a $\mathcal{C}$-semigroup. Since $S$ is Buchsbaum, we have $\mathrm{PF}(S) = \mathcal{H}(S)$. Thus, $\mathcal{H}(S \setminus \{\mathbf{x}\}) = \mathcal{H}(S) \cup \{\mathbf{x}\} = \mathrm{PF}(S) \cup \{\mathbf{x}\}.$ Observe that $\mathrm{PF}(S) \subseteq \mathrm{PF}(S\setminus  \{\mathbf{x}\})$, by the minimality of $\mathbf{x}$. Moreover, if $\mathbf{h}\in \operatorname{PF}(S\setminus \{\mathbf{x}\})$ and $\mathbf{h}\neq \mathbf{x}$, then $\mathbf{h}\in \mathcal{H}(S)=\operatorname{PF}(S)$. Thus, to prove the result, it is enough to prove that $\mathbf{x} \in \mathrm{PF}(S \setminus \{\mathbf{x}\})$. Now, since $\mathbf{x} \in S$, we have $\mathbf{x} + \mathbf{s} \in S \setminus \{\mathbf{x}\}$ for all $\mathbf{x} \in S \setminus \{\mathbf{0}\}$. Therefore, we get $\mathbf{x} \in \mathrm{PF}(S \setminus \{\mathbf{x}\})$. 
\end{proof}

\begin{corollary}
   Let $d>1$, and $S$ be a generalized numerical semigroup in $\mathbb{N}^d$. Then, $S$ is Buchsbaum if and only if $\mathcal{H}(S) = \mathrm{PF}(S).$
\end{corollary}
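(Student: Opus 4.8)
The plan is to derive this corollary as an immediate specialization of Theorem~\ref{thm:H=PF}. That theorem already gives the desired equivalence for \emph{simplicial} $\mathcal{C}$-semigroups with $d > 1$, so the only real task is to verify that every generalized numerical semigroup $S \subseteq \mathbb{N}^d$ satisfies those hypotheses, namely that it is a simplicial $\mathcal{C}$-semigroup. Once this is established, applying the theorem directly yields the stated equivalence between Buchsbaumness and $\mathcal{H}(S) = \operatorname{PF}(S)$, with no further computation needed.

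First I would unwind the definitions. By the convention in the excerpt, a generalized numerical semigroup is precisely a $\mathcal{C}$-semigroup in which the integer cone is $\mathcal{C} = \mathbb{N}^d$; equivalently, $\operatorname{cone}(S) = \mathbb{Q}^d_{\geq 0}$ and $\mathbb{N}^d \setminus S$ is finite. In particular $S$ is automatically a $\mathcal{C}$-semigroup, so it remains only to check simpliciality. The cone $\mathbb{Q}^d_{\geq 0}$ has exactly $d$ extremal rays, given by the coordinate directions $\mathbf{e}_1, \ldots, \mathbf{e}_d$. I would identify the minimal extremal rays of $S$ as the elements $\mathbf{n}_i = c_i \mathbf{e}_i$, where $c_i$ is the least positive integer with $c_i \mathbf{e}_i \in S$; such an element exists by finiteness of the complement, and it is a minimal generator since any decomposition $c_i\mathbf{e}_i = \mathbf{u}+\mathbf{v}$ with $\mathbf{u},\mathbf{v} \in S\setminus\{\mathbf{0}\}$ would force both summands onto the $i$-th axis, contradicting minimality of $c_i$. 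Thus $S$ has exactly $d$ extremal rays.

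Next I would confirm $\operatorname{rank}(\operatorname{G}(S)) = d$. Again using finiteness of $\mathbb{N}^d \setminus S$, for each $i$ there is an integer $N$ with both $N\mathbf{e}_i$ and $(N+1)\mathbf{e}_i$ in $S$, whence $\mathbf{e}_i = (N+1)\mathbf{e}_i - N\mathbf{e}_i \in \operatorname{G}(S)$. Therefore $\operatorname{G}(S) = \mathbb{Z}^d$ and $\operatorname{rank}(\operatorname{G}(S)) = d$, which equals the number of extremal rays. By the definition recalled in the preliminaries, $S$ is simplicial. Since $d > 1$ by hypothesis, Theorem~\ref{thm:H=PF} applies and gives exactly the claim. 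I do not expect any genuine obstacle here: the content of the corollary is entirely carried by Theorem~\ref{thm:H=PF}, and the only steps are the routine verifications that the coordinate cone is simplicial and that the group generated by $S$ is full rank.
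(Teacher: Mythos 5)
Your proposal is correct and follows exactly the paper's route: the paper likewise observes that every generalized numerical semigroup is a simplicial $\mathcal{C}$-semigroup with $\mathcal{C}=\mathbb{N}^d$ and then cites Theorem~\ref{thm:H=PF}. You merely spell out the routine verifications (the $d$ coordinate extremal rays and $\operatorname{G}(S)=\mathbb{Z}^d$) that the paper leaves implicit.
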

\begin{proof}
    Observing that every generalized numerical semigroup is a simplicial affine $\mathcal{C}$-semigroup with $\mathcal{C}=\mathbb{N}^d$, the claim is a straightforward consequence of Theorem~\ref{thm:H=PF}.
\end{proof}

Following the notation in \cite{garcia-someProperties}, if $\mathcal{C}$ is an affine integer cone in $\mathbb{N}^d$ and $A\subseteq \mathcal{C}$ is an antichain with respect to $\leq_\mathcal{C}$, we define 
$$\operatorname{I}_\mathcal{C}(A)=\{\mathbf{x}\in \mathcal{C}\mid \mathbf{x}\leq_\mathcal{C} \mathbf{a}\text{ for some }\mathbf{a}\in A\}.$$ 
Moreover a set $I\subseteq \mathcal{C}$ is an \emph{ideal} of $\mathcal{C}$ if $I+\mathcal{C}\subseteq I$. Using these definitions, we can prove the following.

\begin{theorem}\label{thm:PF-ideal}
Let $S\subseteq \mathcal{C}$, where $\mathcal{C}$ is an affine integer cone in $\mathbb{N}^d$. Then the following are equivalent:
\begin{enumerate}
\item $S$ is a $\mathcal{C}$-semigroup with $\mathcal{H}(S)=\operatorname{PF}(S)$.
\item $S=(\mathcal{C}\setminus \operatorname{I}_\mathcal{C}(A))\cup \{\mathbf{0}\}$ for some finite set $A\subseteq \mathcal{C}$, antichain with respect to $\leq_\mathcal{C}$.
\item $S=(\mathcal{M}(S)+\mathcal{C})\cup \{\mathbf{0}\}$ and $\mathcal{M}(S)$ is a multset of  $\mathcal{C}$.
\end{enumerate}
\end{theorem}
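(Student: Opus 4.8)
The plan is to prove the theorem by establishing the cycle of implications $(1) \Rightarrow (2) \Rightarrow (3) \Rightarrow (1)$, since each arrow translates the condition $\mathcal{H}(S) = \operatorname{PF}(S)$ into a more explicitly structural statement about $S$ as a set difference, then as a sum, and back again. The conceptual heart is the observation that the condition ``every gap is pseudo-Frobenius'' is precisely the statement that adding any nonzero semigroup element to a gap escapes the gapset, which is exactly the defining closure property one needs for $S \setminus \{\mathbf{0}\}$ to be an ideal of $\mathcal{C}$.

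For $(1) \Rightarrow (2)$, I would take $A = \operatorname{PF}(S) = \mathcal{H}(S)$. Since $\mathcal{H}(S)$ is finite ($S$ is a $\mathcal{C}$-semigroup) and every pseudo-Frobenius element is a maximal gap with respect to $\leq_S$, the set $A$ is a finite antichain with respect to $\leq_\mathcal{C}$; here one uses that $\leq_\mathcal{C}$ refines into $\leq_{\mathbb{N}^d}$ as noted in the excerpt, and that comparability of two gaps under $\leq_\mathcal{C}$ would force one to be obtainable from the other by adding a nonzero semigroup element, contradicting both being gaps when the smaller is pseudo-Frobenius. The key claim is then $\operatorname{I}_\mathcal{C}(A) = \mathcal{H}(S)$: the inclusion $\operatorname{I}_\mathcal{C}(A) \supseteq A = \mathcal{H}(S)$ is clear, and conversely if $\mathbf{x} \leq_\mathcal{C} \mathbf{f}$ for some $\mathbf{f} \in \operatorname{PF}(S)$ with $\mathbf{x} \in S \setminus \{\mathbf{0}\}$, then $\mathbf{f} = \mathbf{x} + (\mathbf{f} - \mathbf{x})$ with $\mathbf{f} - \mathbf{x} \in \mathcal{C}$, and I would argue this forces $\mathbf{f} \in S$ using the pseudo-Frobenius property, a contradiction; hence every element of $\operatorname{I}_\mathcal{C}(A)$ below a gap is itself a gap, giving $\operatorname{I}_\mathcal{C}(A) = \mathcal{H}(S)$ and therefore $S = (\mathcal{C} \setminus \mathcal{H}(S)) \cup \{\mathbf{0}\} = (\mathcal{C} \setminus \operatorname{I}_\mathcal{C}(A)) \cup \{\mathbf{0}\}$.

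For $(2) \Rightarrow (3)$, starting from $S = (\mathcal{C} \setminus \operatorname{I}_\mathcal{C}(A)) \cup \{\mathbf{0}\}$, I would first note that $\mathcal{C} \setminus \operatorname{I}_\mathcal{C}(A)$ is an ideal of $\mathcal{C}$: if $\mathbf{x} \in \mathcal{C} \setminus \operatorname{I}_\mathcal{C}(A)$ and $\mathbf{c} \in \mathcal{C}$, then $\mathbf{x} + \mathbf{c} \geq_\mathcal{C} \mathbf{x}$, so if $\mathbf{x} + \mathbf{c}$ were in $\operatorname{I}_\mathcal{C}(A)$, i.e. below some $\mathbf{a} \in A$, transitivity of $\leq_\mathcal{C}$ would place $\mathbf{x}$ below $\mathbf{a}$ too, a contradiction. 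This makes $S \setminus \{\mathbf{0}\}$ an ideal, and in particular a $\mathcal{C}$-semigroup once finiteness of the complement is checked (which follows since $A$ is finite and $\operatorname{I}_\mathcal{C}(A)$ is then bounded in $\leq_{\mathbb{N}^d}$). Then $\mathcal{M}(S) = \mathrm{Minimals}_{\leq_\mathcal{C}}(S \setminus \{\mathbf{0}\})$ is a multset by the remark in the excerpt, and the ideal property gives $\mathcal{M}(S) + \mathcal{C} \subseteq S \setminus \{\mathbf{0}\}$; the reverse inclusion holds because every element of the ideal lies above one of its $\leq_\mathcal{C}$-minimal elements, so $S \setminus \{\mathbf{0}\} \subseteq \mathcal{M}(S) + \mathcal{C}$, yielding $S = (\mathcal{M}(S) + \mathcal{C}) \cup \{\mathbf{0}\}$.

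For $(3) \Rightarrow (1)$, given $S = (\mathcal{M}(S) + \mathcal{C}) \cup \{\mathbf{0}\}$ with $\mathcal{M}(S)$ a multset, I would check that $S$ is a $\mathcal{C}$-semigroup (closure under addition is immediate from the ideal-like form, and finiteness of $\mathcal{C} \setminus S$ follows from the multset containing extremal rays, forcing cofinal coverage of $\mathcal{C}$), and then verify $\mathcal{H}(S) = \operatorname{PF}(S)$. Since $\operatorname{PF}(S) \subseteq \mathcal{H}(S)$ always, the content is $\mathcal{H}(S) \subseteq \operatorname{PF}(S)$: take $\mathbf{h} \in \mathcal{H}(S) = \mathcal{C} \setminus S$ and any $\mathbf{s} \in S \setminus \{\mathbf{0}\}$; then $\mathbf{s} \geq_\mathcal{C} \mathbf{m}$ for some $\mathbf{m} \in \mathcal{M}(S)$, so $\mathbf{h} + \mathbf{s} \geq_\mathcal{C} \mathbf{h} + \mathbf{m}$, and writing $\mathbf{h} + \mathbf{s} = \mathbf{m} + (\mathbf{h} + (\mathbf{s} - \mathbf{m}))$ exhibits $\mathbf{h} + \mathbf{s}$ as an element of $\mathcal{M}(S) + \mathcal{C}$, hence in $S$. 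This shows $\mathbf{h} \in \operatorname{PF}(S)$, completing the cycle. I expect the main obstacle to be the antichain verification in $(1) \Rightarrow (2)$ and the careful handling of the distinguished point $\mathbf{0}$ throughout, since $S \setminus \{\mathbf{0}\}$ is the ideal while $S$ itself is not, so the bookkeeping of when $\mathbf{0}$ is included or excluded must be tracked precisely in each set equality.
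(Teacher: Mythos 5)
Your cycle $(1)\Rightarrow(2)\Rightarrow(3)\Rightarrow(1)$ is the same route the paper takes, and your arguments for $(2)\Rightarrow(3)$ and $(3)\Rightarrow(1)$ are essentially correct (the finiteness of $\mathcal{C}\setminus S$ in $(3)\Rightarrow(1)$, which you wave at via ``cofinal coverage,'' is exactly what the paper outsources to a cited proposition). But there is a genuine error in your $(1)\Rightarrow(2)$: the set $A=\operatorname{PF}(S)=\mathcal{H}(S)$ is in general \emph{not} an antichain with respect to $\leq_\mathcal{C}$, so your witness for statement $(2)$ does not satisfy the required hypothesis. Your justification conflates $\leq_S$ with $\leq_\mathcal{C}$: if $\mathbf{h}_1\leq_\mathcal{C}\mathbf{h}_2$ are two gaps, the difference $\mathbf{h}_2-\mathbf{h}_1$ is a nonzero element of $\mathcal{C}$, which may itself be a gap rather than ``a nonzero semigroup element,'' and then the pseudo-Frobenius property of $\mathbf{h}_1$ gives no contradiction (that property only controls sums with elements of $S\setminus\{\mathbf{0}\}$). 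Pseudo-Frobenius elements are maximal gaps for $\leq_S$, hence an antichain for $\leq_S$, but $\leq_S$ is strictly weaker than $\leq_\mathcal{C}$. The paper's own example after the theorem makes this concrete: there $\mathcal{H}(S)=\operatorname{PF}(S)=\operatorname{I}_\mathcal{C}(A)\setminus\{\mathbf{0}\}$ contains both $(1,1)$ and $(2,2)=(1,1)+(1,1)$ with $(1,1)\in\mathcal{C}$, so the full gapset contains $\leq_\mathcal{C}$-comparable pairs.

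The repair is what the paper does: take $A=\mathrm{Maximals}_{\leq_\mathcal{C}}(\mathcal{H}(S))$, which is an antichain by construction, and then prove $\operatorname{I}_\mathcal{C}(A)\setminus\{\mathbf{0}\}=\mathcal{H}(S)$. Your argument for the nontrivial inclusion (if $\mathbf{x}\in S\setminus\{\mathbf{0}\}$ sits below some $\mathbf{a}\in A$, write $\mathbf{a}=\mathbf{x}+\mathbf{y}$ with $\mathbf{y}\in\mathcal{C}$; then $\mathbf{y}\neq\mathbf{0}$ and $\mathbf{y}\notin S$ since $\mathbf{a}\notin S$, so $\mathbf{y}\in\mathcal{H}(S)=\operatorname{PF}(S)$ and $\mathbf{a}=\mathbf{y}+\mathbf{x}\in S$, a contradiction) is exactly the paper's and survives the change of $A$ unchanged, since every gap still lies below some element of the new $A$. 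Also note the small bookkeeping slip you anticipated: $\mathbf{0}\in\operatorname{I}_\mathcal{C}(A)$ always, so the correct identity is $\operatorname{I}_\mathcal{C}(A)=\mathcal{H}(S)\cup\{\mathbf{0}\}$, not $\operatorname{I}_\mathcal{C}(A)=\mathcal{H}(S)$; this does not affect the final set equality in $(2)$ because of the appended $\{\mathbf{0}\}$.
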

\begin{proof}
$(1) \Rightarrow (2)$ Suppose $S$ is a $\mathcal{C}$-semigroup and $\mathcal{H}(S)=\operatorname{PF}(S)$. Let $A=\mathrm{Maximals}_{\leq_\mathcal{C}}(\mathcal{H}(S))$ and we prove that $\mathcal{H}(S)=\operatorname{I}_\mathcal{C}(A)\setminus \{\mathbf{0}\}$. Observe that, by the definition of $A$, we have  $\mathcal{H}(S)\subseteq  \operatorname{I}_\mathcal{C}(A)\setminus \{\mathbf{0}\}$. Let $\mathbf{x}\in \operatorname{I}_\mathcal{C}(A)\setminus \{\mathbf{0}\}$. So, there exists $\mathbf{a}\in A$ such that $\mathbf{x}\leq_\mathcal{C} \mathbf{a}$. In particular, $\mathbf{x}+\mathbf{y}=\mathbf{a}$ for some $\mathbf{y}\in \mathcal{C}$. Assume $\mathbf{x}\in S\setminus \{\mathbf{0}\}$. In this case, since $\mathbf{a}\in \mathcal{H}(S)$ we have $\mathbf{y}\in \mathcal{H}(S)$. But since $\mathcal{H}(S)=\operatorname{PF}(S)$, we have $\mathbf{y}\in \operatorname{PF}(S)$. In particular, $\mathbf{x}+\mathbf{y}=\mathbf{a}\in S$, a contradiction. Therefore, $\mathbf{x}\in \mathcal{H}(S)$, which led to $\mathcal{H}(S)=  \operatorname{I}_\mathcal{C}(A)\setminus \{\mathbf{0}\}$.

\noindent $(2)\Rightarrow (3)$ Suppose $S=(\mathcal{C}\setminus \operatorname{I}_\mathcal{C}(A))\cup \{\mathbf{0}\}$ for some finite set $A\subseteq \mathcal{C}$ antichain with respect to $\leq_\mathcal{C}$. Since for every $\mathbf{s}\in S$ there exists $\mathbf{m}\in \mathcal{M}(S)$ such that $\mathbf{m} \leq_{\mathcal{C}} \mathbf{s}$, it follows $S\subseteq (\mathcal{M}(S)+\mathcal{C})\cup \{\mathbf{0}\}$. Moreover, if $\mathbf{m}\in \mathcal{M}(S)$ and $\mathbf{c}\in \mathcal{C}$, then $\mathbf{m}+\mathbf{c}\in S$. Otherwise $\mathbf{m}\leq_\mathcal{C}\mathbf{m}+\mathbf{c}\leq_{\mathcal{C}}\mathbf{a}$ for some $\mathbf{a}\in A$, which means $\mathbf{m}\in \operatorname{I}_\mathcal{C}(A)\setminus \{\mathbf{0}\}$, a contradiction. So, $S=(\mathcal{M}(S)+\mathcal{C})\cup \{\mathbf{0}\}$. Furthermore, $\mathcal{M}(S)$ is a multset of $\mathcal{C}$. In fact, suppose $\mathbf{r}$ is a minimal extremal ray of $\mathcal{C}$ and $\{\lambda\mathbf{r}\mid \lambda \in \mathbb{N}\}\cap S=\emptyset$. So, for all $\lambda\in \mathbb{N}$ there exists $\mathbf{a}_\lambda\in A$ such that $\lambda \mathbf{r}\leq_\mathcal{C} \mathbf{a}_\lambda$. Observing that for all $\mathbf{a}_\lambda\in \mathcal{C}$ the set $\operatorname{I}_\mathcal{C}(\{\mathbf{a}_\lambda\})$ is finite, we obtain $A$ is an infinite set, a contradiction. Therefore, there exists $m=\min \{\lambda\in \mathbb{N}\mid \lambda\mathbf{r}\in S\}$, we have $m\mathbf{r}\in \mathcal{M}(S)$ and it is an extremal ray of $\mathcal{C}$.

\noindent $(3)\Rightarrow (1)$ By hypothesis $S=(\mathcal{M}(S)+\mathcal{C})\cup \{\mathbf{0}\}$ and we can assume there exists $\{\mathbf{a}_1,\ldots,\mathbf{a}_d\}\subseteq \mathcal{M}(S)$ rays of $\mathcal{C}$ inside $S$. Observe that in this  case $S\setminus \{\mathbf{0}\}+\mathcal{C}\subseteq S\setminus \{\mathbf{0}\}$. In fact, if $\mathbf{s}\in S\setminus \{\mathbf{0}\}$ and $\mathbf{c}\in \mathcal{C}$, then there exists $\mathbf{m}\in \mathcal{M}(S)$ such that $\mathbf{m}+\mathbf{c}'=\mathbf{s}$ for some $\mathbf{c}'\in \mathcal{C}$. In particular, $\mathbf{s}+\mathbf{c}=\mathbf{m}+\mathbf{c}'+\mathbf{c}\in (\mathcal{M}(S)+\mathcal{C})=S\setminus \{\mathbf{0}\}$. This means $S\setminus \{\mathbf{0}\}$ is an ideal of $\mathcal{C}$ and, in particular, $S$ is a submonoid of the affine monoid $\mathcal{C}$. Moreover, by \cite[Proposition 6]{garcia-MED} we obtain $S$ is a $\mathcal{C}$-semigroup. Finally, let $\mathbf{h}\in \mathcal{H}(S)$ and $\mathbf{s}\in S\setminus \{\mathbf{0}\}$.  We can get $\mathbf{m}\in \mathcal{M}(S)$ such that $\mathbf{m}+\mathbf{c}=\mathbf{s}$ for some $\mathbf{c}\in \mathcal{C}$. Hence $\mathbf{h}+\mathbf{s}=\mathbf{m}+\mathbf{h}+\mathbf{c}\in (\mathcal{M}(S)+\mathcal{C}) \cup \{\mathbf{0}\}=S$. This implies that $\mathcal{H}(S)\subseteq \operatorname{PF}(S)$ and, since the reverse inclusion always holds, we have that the two sets are equal. This concludes the proof.
\end{proof}

\begin{example} \rm
Consider the affine integer cone $\mathcal{C}=\mathrm{Span}_{\mathbb{Q}_{\leq 0}}((1,1),(3,1))\cap \mathbb{N}^2$. Given the set $A=\{(5,4),(5,5),(7,4),(8,4),(10,4),(11,4)\}\subseteq \mathcal{C}$, it is not difficult to see that $A$ is an antichain with respect to $\leq_\mathcal{C}$ and it is possible to compute 
\begin{align*}   
\operatorname{I}_\mathcal{C}(A)= \left\lbrace
\begin{array}{c}
    (0,0), (1,1),(2,1),(3,1),(2,2),(3,2),(4,2),(5,2),(6,2),(3,3),(4,3), (5,3), \\ [1mm]
   (6,3),(7,3),(8,3),(9,3), (4,4),(5,4),(5,5),(7,4),(8,4),(10,4),(11,4)
\end{array}
\right\rbrace .
\end{align*} 
The set $S=(\mathcal{C}\setminus \operatorname{I}_\mathcal{C}(A))\cup \{\mathbf{0}\}$ is a $\mathcal{C}$-semigroup. Moreover, $S=(\mathcal{M}(S)+\mathcal{C})\cup \{\mathbf{0}\}$ where $\mathcal{M}(S)=\{(6,4),(6,5),(6,6),(9,4),(12,3)\}$, see also Figure~\ref{fig:ideal}.
\begin{figure}[h!]
\includegraphics[scale=0.7]{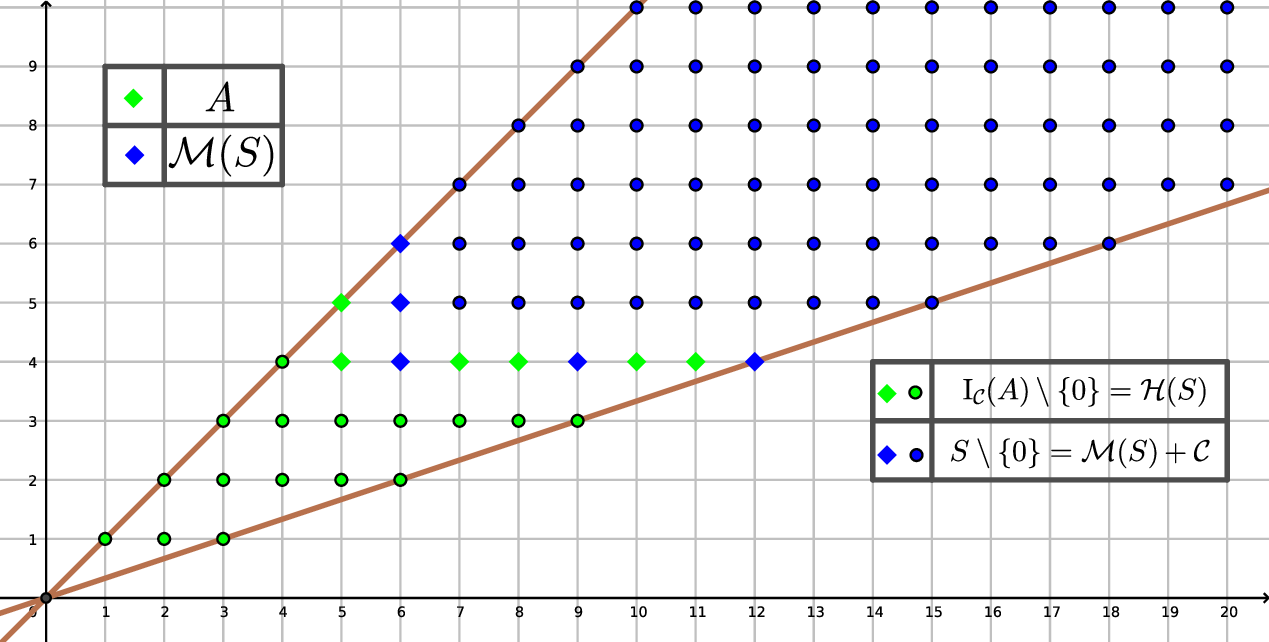}
\caption{A $\mathcal{C}$-semigroup as in Theorem~\ref{thm:PF-ideal}}
\label{fig:ideal}
\end{figure}
\end{example}

Considering the previous results, the structure of a simplicial $\mathcal{C}$-semigroup such that the related semigroup ring is Buchsbaum can be characterized. This fact is stated in the following.

\begin{corollary}\label{cor:buchs-ideal}
    Let $S\subseteq \mathbb{N}^d$ be a simplicial $\mathcal{C}$-semigrouop with $d>1$. Then $S$ is Buchsbaum if and only if $S\setminus \{\mathbf{0}\}$ is an ideal of $\mathcal{C}$. 
\end{corollary}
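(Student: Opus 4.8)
The plan is to deduce the statement by combining Theorem~\ref{thm:H=PF} with Theorem~\ref{thm:PF-ideal}, so that the whole content reduces to identifying the condition ``$S\setminus\{\mathbf{0}\}$ is an ideal of $\mathcal{C}$'' with the condition $\mathcal{H}(S)=\operatorname{PF}(S)$. Since $S$ is assumed to be a simplicial $\mathcal{C}$-semigroup with $d>1$, Theorem~\ref{thm:H=PF} already tells me that $S$ is Buchsbaum precisely when $\mathcal{H}(S)=\operatorname{PF}(S)$. Hence it suffices to establish the equivalence
\[
S\setminus\{\mathbf{0}\}\text{ is an ideal of }\mathcal{C}\quad\Longleftrightarrow\quad \mathcal{H}(S)=\operatorname{PF}(S).
\]

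First I would treat the implication from the gap-equals-pseudo-Frobenius condition to the ideal condition. Assuming $\mathcal{H}(S)=\operatorname{PF}(S)$, I can invoke the implication $(1)\Rightarrow(3)$ of Theorem~\ref{thm:PF-ideal} to write $S=(\mathcal{M}(S)+\mathcal{C})\cup\{\mathbf{0}\}$ with $\mathcal{M}(S)$ a multset. The computation already carried out inside the proof of $(3)\Rightarrow(1)$ there shows verbatim that $(S\setminus\{\mathbf{0}\})+\mathcal{C}\subseteq S\setminus\{\mathbf{0}\}$, which is exactly the assertion that $S\setminus\{\mathbf{0}\}$ is an ideal of $\mathcal{C}$. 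So this direction costs nothing beyond citing the earlier theorem.

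For the reverse implication, suppose $S\setminus\{\mathbf{0}\}$ is an ideal of $\mathcal{C}$. Because the inclusion $\operatorname{PF}(S)\subseteq\mathcal{H}(S)$ always holds, I only need to prove $\mathcal{H}(S)\subseteq\operatorname{PF}(S)$. Take $\mathbf{h}\in\mathcal{H}(S)$ and any $\mathbf{s}\in S\setminus\{\mathbf{0}\}$. Since $\mathcal{H}(S)\subseteq\mathcal{C}$ by definition, I have $\mathbf{h}\in\mathcal{C}$, and therefore $\mathbf{h}+\mathbf{s}\in(S\setminus\{\mathbf{0}\})+\mathcal{C}\subseteq S\setminus\{\mathbf{0}\}\subseteq S$. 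As this holds for every nonzero $\mathbf{s}\in S$, it shows $\mathbf{h}\in\operatorname{PF}(S)$; hence $\mathcal{H}(S)\subseteq\operatorname{PF}(S)$ and the two sets coincide.

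There is no serious obstacle here, as the corollary is a formal consequence of the two preceding theorems. The only point requiring a little care is that the hypothesis already guarantees $S$ is a $\mathcal{C}$-semigroup, so condition $(1)$ of Theorem~\ref{thm:PF-ideal} collapses to the single requirement $\mathcal{H}(S)=\operatorname{PF}(S)$ and Theorem~\ref{thm:H=PF} applies directly. Chaining the two implications above with Theorem~\ref{thm:H=PF} then yields the claimed equivalence, completing the proof.
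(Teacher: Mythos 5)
Your proof is correct and follows essentially the same route as the paper: both derive the corollary as a formal consequence of Theorem~\ref{thm:H=PF} and Theorem~\ref{thm:PF-ideal}. The only (immaterial) difference is that the paper identifies the ideal condition with statement $(3)$ of Theorem~\ref{thm:PF-ideal} before chaining the equivalences, whereas you link it directly to $\mathcal{H}(S)=\operatorname{PF}(S)$, supplying a short direct verification that an ideal complement forces every gap to be pseudo-Frobenius.
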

\begin{proof}
    In the proof of Theorem~\ref{thm:PF-ideal}, we showed that if $S=(\mathcal{M}(S)+\mathcal{C})\cup \{\mathbf{0}\}$, then $S\setminus \{\mathbf{0}\}$ is an ideal of $\mathcal{C}$. Moreover, if $S\setminus \{\mathbf{0}\}+\mathcal{C}\subseteq S\setminus \{\mathbf{0}\}$ it is trivial that $S=(\mathcal{M}(S)+\mathcal{C})\cup \{\mathbf{0}\}$. Therefore, $S\setminus \{\mathbf{0}\}+\mathcal{C}\subseteq S\setminus \{\mathbf{0}\}$ if and only if $S=(\mathcal{M}(S)+\mathcal{C})\cup \{\mathbf{0}\}$ and we can conclude by Theorem~\ref{thm:H=PF} and Theorem~\ref{thm:PF-ideal}.
\end{proof}

\section{Buchsbaum $\mathcal{C}$-semigroups having maximal embedding dimension}

Let $S\subseteq \mathbb{N}^d$ be an affine semigroup and $X$ be a non-empty subset of $S$, define
$$\mathrm{Ap}(S,X)=\{\mathbf{s}\in S\mid \mathbf{s}-\mathbf{x}\notin S\ \text{ for all } \mathbf{x}\in X\}.$$
The set $\mathrm{Ap}(S,X)$ is called the \textit{Ap\'ery set} of $S$ with respect to $X$. The set $\mathrm{Ap}(S,X)$ can be infinite, but it is known that if $E$ is a set of extremal rays of $S$ then $\mathrm{Ap}(S, E)$ is a finite set (see, for instance, \cite{jafari2024depth}). Observe that we can always assume $\mathrm{msg}(S)=E\sqcup N$ where $E$ is the set of minimal extremal rays of $S$. In this case, it is not difficult to see that $ \mathrm{msg}(S)\setminus E \subseteq \mathrm{Ap}(S, E)\setminus \{\mathbf{0}\}$. Accordingly to \cite{buchsbaum-simplicial} and \cite{garcia-MED}, we say that $S$ has \emph{minimal Ap\`ery set} or has \emph{maximal embedding dimension} if $ \mathrm{msg}(S)\setminus E = \mathrm{Ap}(S, E)\setminus \{\mathbf{0}\}$. The following result can be considered as a specialization of \cite[Proposition~20]{garcia-MED} in the case $S$ is a semigroup as in Theorem~\ref{thm:PF-ideal}. In particular, it allows us to characterize Buchsbaum simplicial $\mathcal{C}$-semigroups having maximal embedding dimension.

\begin{theorem}\label{thm:buchs-MED}
Let $\mathcal{M}$ be a multset of an affine integer cone $\mathcal{C}\subseteq \mathbb{N}^d$. Assume $\mathcal{M}=\{\mathbf{n}_1, \ldots, \mathbf{n}_d\}\sqcup \{\mathbf{m}_1,\ldots,\mathbf{m}_r\}$ such that $\{\mathbf{n}_1, \ldots, \mathbf{n}_d\}$ is a set of extremal rays of $\mathcal{C}$ and define $S=(\mathcal{M}+\mathcal{C})\cup \{\mathbf{0}\}$. Then $S$ is maximal embedding dimension if and only if for all $i,j\in \{1,\ldots,r\}$ there exists $k\in \{1,\ldots,d\}$ and $\mathbf{m}\in \mathcal{M}$ such that $\mathbf{m}\leq_\mathcal{C }\mathbf{m}_i+\mathbf{m}_j-\mathbf{n}_k$.
\end{theorem}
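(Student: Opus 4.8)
The plan is to translate everything into the language of the Apéry set $\mathrm{Ap}(S,E)$ with respect to $E=\{\mathbf{n}_1,\ldots,\mathbf{n}_d\}$ and to exploit that, by Theorem~\ref{thm:PF-ideal}, the hypothesis $S=(\mathcal{M}+\mathcal{C})\cup\{\mathbf{0}\}$ forces $S\setminus\{\mathbf{0}\}$ to be an ideal of $\mathcal{C}$. First I would record the routine facts: since $\mathcal{M}$ is an antichain with $S\setminus\{\mathbf{0}\}=\mathcal{M}+\mathcal{C}$, one has $\mathcal{M}(S)=\mathcal{M}$, the $\mathbf{n}_k$ are exactly the minimal extremal rays forming $E$, and every $\mathbf{m}_i\in\mathcal{M}\setminus E$ lies in $\mathrm{Ap}(S,E)\setminus\{\mathbf{0}\}$ (if $\mathbf{m}_i-\mathbf{n}_k\in S$ then some $\mathbf{m}'\in\mathcal{M}$ would satisfy $\mathbf{m}'<_\mathcal{C}\mathbf{m}_i$, contradicting the antichain property). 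Writing $\mathrm{Ap}(S,E)=S\setminus\bigcup_{k}(\mathbf{n}_k+S)$, the displayed condition ``$\exists\,k,\mathbf{m}:\mathbf{m}\leq_\mathcal{C}\mathbf{m}_i+\mathbf{m}_j-\mathbf{n}_k$'' is literally the statement $\mathbf{m}_i+\mathbf{m}_j\notin\mathrm{Ap}(S,E)$ (using that $\mathbf{m}_i+\mathbf{m}_j=\mathbf{n}_k$ is impossible by the antichain property), so the theorem reduces to: $S$ is maximal embedding dimension if and only if $\mathbf{m}_i+\mathbf{m}_j\notin\mathrm{Ap}(S,E)$ for all $i,j$.

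The second step is the standard reformulation of the maximal embedding dimension property. Since $\mathrm{msg}(S)\setminus E\subseteq\mathrm{Ap}(S,E)\setminus\{\mathbf{0}\}$ always holds, $S$ is maximal embedding dimension iff every nonzero Apéry element is a minimal generator (note no $\mathbf{n}_k$ lies in $\mathrm{Ap}(S,E)$). Using that Apéry sets are closed under taking summands (if $\mathbf{w}\in\mathrm{Ap}(S,E)$ and $\mathbf{w}=\mathbf{s}_1+\mathbf{s}_2$ with $\mathbf{s}_i\in S$, then $\mathbf{s}_1,\mathbf{s}_2\in\mathrm{Ap}(S,E)$, because $\mathbf{s}_1-\mathbf{n}_k\in S$ would give $\mathbf{w}-\mathbf{n}_k\in S$), a nonzero Apéry element fails to be a minimal generator precisely when it is a sum of two nonzero Apéry elements. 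Hence $S$ is maximal embedding dimension iff $\mathbf{w}_1+\mathbf{w}_2\notin\mathrm{Ap}(S,E)$ for all $\mathbf{w}_1,\mathbf{w}_2\in\mathrm{Ap}(S,E)\setminus\{\mathbf{0}\}$. With this in hand the forward implication is immediate: if $S$ is maximal embedding dimension then, since $\mathbf{m}_i,\mathbf{m}_j\in\mathrm{Ap}(S,E)\setminus\{\mathbf{0}\}$, we get $\mathbf{m}_i+\mathbf{m}_j\notin\mathrm{Ap}(S,E)$, which is the asserted condition.

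The core of the converse — and the main obstacle — is that $\mathrm{Ap}(S,E)\setminus\{\mathbf{0}\}$ is in general strictly larger than $\{\mathbf{m}_1,\ldots,\mathbf{m}_r\}$ (already for generalized numerical semigroups one finds Apéry elements lying only above extremal rays), so I must upgrade the hypothesis, which controls only the $\mathbf{m}_i$, to sum-freeness of the whole set $\mathrm{Ap}(S,E)\setminus\{\mathbf{0}\}$. The key lemma that makes this possible is: if $\mathbf{u},\mathbf{v}\in S\setminus\{\mathbf{0}\}$ and $\mathbf{n}_k\leq_\mathcal{C}\mathbf{u}$ for some $k$, then $\mathbf{u}+\mathbf{v}\notin\mathrm{Ap}(S,E)$; indeed $(\mathbf{u}+\mathbf{v})-\mathbf{n}_k=(\mathbf{u}-\mathbf{n}_k)+\mathbf{v}$ with $\mathbf{u}-\mathbf{n}_k\in\mathcal{C}$ and $\mathbf{v}\in S\setminus\{\mathbf{0}\}$, so the ideal property $S\setminus\{\mathbf{0}\}+\mathcal{C}\subseteq S\setminus\{\mathbf{0}\}$ puts it back in $S$. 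This lemma shows that any Apéry element lying above an extremal ray can never be a bad summand, so the only case left to treat is $\mathbf{w}_1,\mathbf{w}_2$ each lying above some $\mathbf{m}_i,\mathbf{m}_j\in\mathcal{M}\setminus E$.

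Finally I would close the converse. Assuming the hypothesis, take $\mathbf{w}_1,\mathbf{w}_2\in\mathrm{Ap}(S,E)\setminus\{\mathbf{0}\}$ and choose $\mathbf{p}\leq_\mathcal{C}\mathbf{w}_1$, $\mathbf{q}\leq_\mathcal{C}\mathbf{w}_2$ with $\mathbf{p},\mathbf{q}\in\mathcal{M}$. If either $\mathbf{p}$ or $\mathbf{q}$ equals some $\mathbf{n}_k$, the key lemma already yields $\mathbf{w}_1+\mathbf{w}_2\notin\mathrm{Ap}(S,E)$. Otherwise $\mathbf{p}=\mathbf{m}_i$ and $\mathbf{q}=\mathbf{m}_j$ with $\mathbf{m}_i+\mathbf{m}_j\leq_\mathcal{C}\mathbf{w}_1+\mathbf{w}_2$; the hypothesis gives a $k$ with $(\mathbf{m}_i+\mathbf{m}_j)-\mathbf{n}_k\in S\setminus\{\mathbf{0}\}$, and writing $(\mathbf{w}_1+\mathbf{w}_2)-\mathbf{n}_k=[(\mathbf{m}_i+\mathbf{m}_j)-\mathbf{n}_k]+[(\mathbf{w}_1+\mathbf{w}_2)-(\mathbf{m}_i+\mathbf{m}_j)]$ with the second bracket in $\mathcal{C}$, the ideal property again gives $(\mathbf{w}_1+\mathbf{w}_2)-\mathbf{n}_k\in S$, i.e. $\mathbf{w}_1+\mathbf{w}_2\notin\mathrm{Ap}(S,E)$. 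By the reformulation of the second step, $S$ is maximal embedding dimension, completing the proof. Throughout, the only delicate point is the interaction with the extremal rays, which is exactly what the key lemma isolates.
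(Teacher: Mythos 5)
Your proposal is correct and follows essentially the same route as the paper: both reduce maximal embedding dimension to showing that a sum of two nonzero elements of $S$ never lies in $\operatorname{Ap}(S,E)$, pick minimal elements of $\mathcal{M}$ below each summand, split into the case where one of them is an extremal ray $\mathbf{n}_k$ (handled by the ideal property $S\setminus\{\mathbf{0}\}+\mathcal{C}\subseteq S\setminus\{\mathbf{0}\}$) and the case $\mathbf{m}_i,\mathbf{m}_j$ (handled by the hypothesis). Your extra preliminary reduction to sums of two nonzero \emph{Ap\'ery} elements is harmless but not needed; otherwise the arguments coincide.
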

\begin{proof}
Let us denote $E=\{\mathbf{n}_1,\ldots, \mathbf{n}_d\}$. Observe that if $S=(\mathcal{M}+\mathcal{C})\cup \{\mathbf{0}\}$, trivially we have $\mathcal{M}
=\mathrm{Minimals}_{\leq_\mathcal{C}}(S\setminus \{\mathbf{0}\})$. Moreover, under this condition, $S$ is a $\mathcal{C}$-semigroup by \cite[Proposition 6]{garcia-MED}.  Now observe that, since $S=(\mathcal{M}+\mathcal{C})\cup \{\mathbf{0}\}$, the condition with $\mathbf{m}\in \mathcal{M}$ such that $\mathbf{m}\leq_{\mathcal{C}}\mathbf{m}_i+\mathbf{m}_j-\mathbf{n}_k$ is equivalent to $\mathbf{m}_i+\mathbf{m}_j-\mathbf{n}_k\in S$. Suppose $S$ has maximal embedding dimension. Then for all $i,j\in \{1,\ldots,r\}$ we have $\mathbf{m}_i+\mathbf{m}_j\notin \mathrm{msg}(S)$ and, as a consequence, $\mathbf{m}_i+\mathbf{m}_j\notin \mathrm{Ap}(S, E)\setminus \{\mathbf{0}\}$. So, by definition, there exists $k\in \{1,\ldots,d\}$ such that $\mathbf{m}_i+\mathbf{m}_j-\mathbf{n}_k\in S$. Now, suppose for all $i,j\in \{1,\ldots,r\}$ there exists $k\in \{1,\ldots,d\}$ such that $\mathbf{m}_i+\mathbf{m}_j-\mathbf{n}_k\in S$. In order to prove $S$ has maximal embedding dimension, it is sufficient to prove that if $\mathbf{x}\in S$ is nonzero and is not a minimal generator, then $\mathbf{x} \notin \mathrm{Ap}(S, E)$. So, assume $\mathbf{x}=\mathbf{x}_1+\mathbf{x}_2$ with $\mathbf{x}_1,\mathbf{x}_2\in S\setminus \{\mathbf{0}\}$. In particular, there exists $\mathbf{a}_1,\mathbf{a}_2\in \mathcal{M}(S)$ such that $\mathbf{a}_1\leq_{\mathcal{C}}\mathbf{x}_1$ and $\mathbf{a}_2\leq_{\mathcal{C}}\mathbf{x}_2$. Assume $\mathbf{a}_1=\mathbf{n}_i$ for some $i\in \{1,\ldots,d\}$. Then $\mathbf{x}-\mathbf{n}_i=(\mathbf{x}_1-\mathbf{a}_1)+\mathbf{x}_2\in S$, since $\mathbf{x}_2\in S$ and $\mathbf{x}_1-\mathbf{a}_1 \in \mathcal{C}$. The same argument holds in the case $\mathbf{a}_2=\mathbf{n}_i$ for some $i\in \{1,\ldots,d\}$. Hence, suppose $\mathbf{m}_i\leq_{\mathcal{C}}\mathbf{x}_1$ and $\mathbf{m}_j\leq_{\mathcal{C}}\mathbf{x}_2$ for some $i,j\in\{1,\ldots,r\}$. Let $k\in \{1,\ldots,d\}$ such that $\mathbf{m}_i+\mathbf{m}_j-\mathbf{n}_k\in S$. Therefore, for some $\mathbf{c}_1,\mathbf{c}_2\in \mathcal{C}$, we have $\mathbf{x}-\mathbf{n}_k=\mathbf{c}_1+\mathbf{c}_2+\mathbf{m}_i+\mathbf{m}_j-\mathbf{n}_k\in S$. This concludes the proof.
\end{proof}

\begin{example}\rm
Let $\mathcal{C}=\mathbb{N}^2$ and $\mathcal{M}=\{(0,4),(4,0),(2,2)\}$. With reference to Theorem~\ref{thm:buchs-MED} we can denote $\mathcal{M}=\{\mathbf{n}_1,\mathbf{n}_2\}\sqcup \{\mathbf{m}_1\}$, where $\mathbf{n}_1=(0,4)$, $\mathbf{n}_2=(4,0)$ and $\mathbf{m}_1=(2,2)$. Then $S=(\mathcal{M}+\mathbb{N}^2)\cup \{(0,0)\}$ is Buchsbaum and has maximal embedding dimension since $\mathbf{m}_1+\mathbf{m}_1-\mathbf{n}_1=(4,0) \geq_{\mathbb{N}^2} \mathbf{n}_2$.
\end{example}

Suppose $\mathcal{C}\subseteq \mathbb{N}^d$ is an affine simplicial integer cone and $E=\{\mathbf{a}_1, \ldots, \mathbf{a}_d\}$ is a set of extremal rays of $\mathcal{C}$ (also non minimal). Let $\sim$ be the relation in $\mathbb{N}^d$ defined by $\mathbf{x}\sim \mathbf{y}$ if and only if $\mathbf{x}-\mathbf{y}\in \operatorname{G}(\{\mathbf{a}_1, \ldots, \mathbf{a}_d\})=\{\sum_{i=1}^d \lambda_i\mathbf{a}_i\mid \lambda_i\in \mathbb{Z}\text{ for every}i\in \{1,\ldots,d\}\}$. It is not difficult to see that $\sim$ is an equivalence relation and if $\mathbf{x}\in \mathbb{N}^d$ we denote $[\mathbf{x}]=\{\mathbf{y}\in \mathbb{N}^d\mid \mathbf{x}\sim \mathbf{y}\}$, that is, the equivalence class of $\mathbf{x}$ with respect to $\sim$.

\begin{lemma}\label{lem:r-unique}
Let $S\subseteq \mathbb{N}^d$ be a simplicial affine semigroup and $E=\{\mathbf{a}_1, \ldots, \mathbf{a}_d\}\subseteq S$ be a set of extremal rays of $S$. Denote by $\mathcal{C}=\mathrm{cone}(E)\cap \mathbb{N}^d$. Then the following holds:
\begin{enumerate}

\item For all $\mathbf{x}\in \mathcal{C}$ there exists a unique $\mathbf{r}\in \operatorname{Ap}(\mathcal{C},E)$  and a unique vector $(\lambda_1,\ldots,\lambda_d)\in \mathbb{N}^d$ such that $\mathbf{x}\sim \mathbf{r}$ and $\mathbf{x}=\mathbf{r}+\sum_{i=1}^d \lambda_i \mathbf{a}_i$.

\item $\mathbf{x}\in \operatorname{Ap}(S,E)$ if and only if $\mathbf{x}\in \mathrm{Minimals}_{\leq_\mathcal{C}}([\mathbf{r}]\cap S)$ for some $\mathbf{r}\in \operatorname{Ap}(\mathcal{C},E)$.
\end{enumerate}
\end{lemma}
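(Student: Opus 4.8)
The plan is to reduce both statements to a clean description of the fibers of the equivalence relation $\sim$ inside $\mathcal{C}$, exploiting that the extremal rays $\mathbf{a}_1,\dots,\mathbf{a}_d$ are linearly independent over $\mathbb{Q}$ (since $S$ is simplicial).

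First, for (1), I would record two elementary facts. Because each $\mathbf{a}_i\in\mathbb{N}^d$ and the $\mathbf{a}_i$ are linearly independent, every $\mathbf{x}\in\mathrm{cone}(E)$ has a unique expression $\mathbf{x}=\sum_{i=1}^d q_i\mathbf{a}_i$ with $q_i\in\mathbb{Q}_{\geq 0}$, and moreover $\mathrm{cone}(E)\cap\mathbb{Z}^d=\mathrm{cone}(E)\cap\mathbb{N}^d=\mathcal{C}$ (an integral point of the cone has nonnegative rational coordinates against nonnegative vectors, hence already lies in $\mathbb{N}^d$). Consequently, for $\mathbf{r}\in\mathcal{C}$ with coordinates $(\alpha_1,\dots,\alpha_d)$, the difference $\mathbf{r}-\mathbf{a}_i$ lies in $\mathbb{Z}^d$ and therefore belongs to $\mathcal{C}$ if and only if its $i$-th coordinate $\alpha_i-1$ is nonnegative. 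This yields the characterization that $\mathbf{r}\in\operatorname{Ap}(\mathcal{C},E)$ if and only if $\alpha_i\in[0,1)$ for every $i$. Existence in (1) is then obtained by setting $\lambda_i=\lfloor q_i\rfloor$ and $\mathbf{r}=\mathbf{x}-\sum_i\lambda_i\mathbf{a}_i$, which has coordinates the fractional parts $\{q_i\}\in[0,1)$ and lies in $\mathcal{C}$; uniqueness follows because two such representatives $\mathbf{r},\mathbf{r}'$ satisfy $\mathbf{r}\sim\mathbf{r}'$, forcing equal fractional coordinates, and both have all coordinates in $[0,1)$, whence $\mathbf{r}=\mathbf{r}'$ and the $\lambda_i$ agree.

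For (2), the key observation is that on a single $\sim$-class the order $\leq_\mathcal{C}$ is governed entirely by the $\mathbf{a}_i$: if $\mathbf{y},\mathbf{z}$ lie in the same class with $\mathbf{y}\leq_\mathcal{C}\mathbf{z}$, then $\mathbf{z}-\mathbf{y}\in\mathcal{C}\cap\operatorname{G}(E)=\{\sum_i\lambda_i\mathbf{a}_i\mid \lambda_i\in\mathbb{N}\}$, again by linear independence and integrality. I would first note $S\subseteq\mathcal{C}$. Then for $\mathbf{x}\in S$ I claim $\mathbf{x}\in\mathrm{Minimals}_{\leq_\mathcal{C}}([\mathbf{x}]\cap S)$ if and only if $\mathbf{x}-\mathbf{a}_i\notin S$ for all $i$, i.e. if and only if $\mathbf{x}\in\operatorname{Ap}(S,E)$. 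One direction is immediate; for the other, if $\mathbf{x}-\sum_i\lambda_i\mathbf{a}_i\in S$ for some $\lambda\neq\mathbf{0}$ with, say, $\lambda_j\geq 1$, then, using $\mathbf{x}-\mathbf{a}_j=(\mathbf{x}-\sum_i\lambda_i\mathbf{a}_i)+\sum_{i\neq j}\lambda_i\mathbf{a}_i+(\lambda_j-1)\mathbf{a}_j$ together with $E\subseteq S$ and closure of $S$ under addition, we get $\mathbf{x}-\mathbf{a}_j\in S$, so a single ray already witnesses non-minimality. Combining this equivalence with part (1), which attaches to $[\mathbf{x}]$ the unique representative $\mathbf{r}\in\operatorname{Ap}(\mathcal{C},E)$ with $[\mathbf{x}]=[\mathbf{r}]$, gives exactly the asserted biconditional.

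The main obstacle is the interplay between cone-membership and integrality: the whole argument rests on $\mathrm{cone}(E)\cap\mathbb{Z}^d=\mathcal{C}$ and on identifying $\leq_\mathcal{C}$, restricted to a $\sim$-class, with the free $\mathbb{N}$-action by the $\mathbf{a}_i$, both of which use linear independence of $E$ in an essential way. I would also take particular care, in (2), with the reduction from an arbitrary nonnegative combination $\sum_i\lambda_i\mathbf{a}_i$ to a single $\mathbf{a}_j$, since this is precisely where the semigroup structure of $S$ (closure under addition and $E\subseteq S$) is used, rather than merely the cone structure.
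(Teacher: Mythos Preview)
Your argument for part~(2) is essentially the paper's: both identify $\leq_\mathcal{C}$ restricted to a $\sim$-class with the action of $\{\sum_i\lambda_i\mathbf{a}_i:\lambda_i\in\mathbb{N}\}$ via linear independence of $E$, and then reduce an arbitrary nonnegative combination to a single $\mathbf{a}_j$ using $E\subseteq S$ and closure under addition. The organization differs slightly (you phrase it as a single biconditional on $\mathbf{x}\in S$, the paper treats the two implications separately), but the content is the same.

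Part~(1) is where you diverge. The paper obtains existence from \cite[Lemma~1.4]{rosalesCohen} and deduces uniqueness by first observing that $\mathbb{K}[\mathcal{C}]$ is normal Cohen--Macaulay (Gordan's Lemma and Hochster's theorem) and then invoking \cite[Theorem~1.5]{rosalesCohen}. Your route is considerably more elementary and self-contained: you write $\mathbf{x}=\sum_i q_i\mathbf{a}_i$ in the rational basis given by $E$, characterize $\operatorname{Ap}(\mathcal{C},E)$ as the points with all $q_i\in[0,1)$, and take $\lambda_i=\lfloor q_i\rfloor$; uniqueness then follows because two representatives in $[0,1)^d$ differing by an integer vector must coincide. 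This avoids any appeal to Cohen--Macaulayness or normality, at the cost of the mild bookkeeping step $\mathrm{cone}(E)\cap\mathbb{Z}^d=\mathrm{cone}(E)\cap\mathbb{N}^d$ (which you justify correctly via $\mathrm{cone}(E)\subseteq\mathbb{Q}^d_{\geq 0}$). The paper's approach has the advantage of situating the lemma within the general Rosales framework for simplicial semigroups; yours has the advantage of being entirely internal to the present setup.
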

\begin{proof}
1) Let $\mathbf{x}\in \mathcal{C}$. Since $\mathcal{C}$ is a simplicial affine semigroup, by \cite[Lemma 1.4]{rosalesCohen} there exists $\mathbf{r}\in \operatorname{Ap}(\mathcal{C},E)$ such that $\mathbf{x}\sim \mathbf{r}$ and $\mathbf{x}=\mathbf{r}+\sum_{i=1}^d \lambda_i \mathbf{a}_i$, with $\lambda_1,\ldots,\lambda_d\in \mathbb{N}$. Moreover, since $\mathcal{C}$ is an integer cone, by Gordan's Lemma and a result of M. Hochster it is known that the semigroup ring $\mathbb{K}[\mathcal{C}]$ is a normal Cohen-Macaulay domain (see for instance \cite[Proposition 6.1.2]{bruns-herzog} and \cite[Theorem 6.3.5]{bruns-herzog}). Therefore, the uniqueness of the expression $\mathbf{x}=\mathbf{r}+\sum_{i=1}^d \lambda_i \mathbf{a}_i$ follows by \cite[Theorem 1.5]{rosalesCohen}.

\noindent 2) Suppose $\mathbf{x}\in \operatorname{Ap}(S,E)$ and, by the first part of this result, let $\mathbf{r}\in \operatorname{Ap}(\mathcal{C},E)$ such that $\mathbf{x}\sim \mathbf{r}$. Assume there exists $\mathbf{y}\sim \mathbf{r}$ such that $\mathbf{y}\in S$ and $\mathbf{y}\leq_{\mathcal{C}} \mathbf{x}$. Then $\mathbf{x}=\mathbf{y}+\mathbf{c}$ with $\mathbf{c}\in \mathcal{C}$ and, by transitivity we have $\mathbf{x}\sim \mathbf{y}$, that is, $\mathbf{x}=\mathbf{y}+\sum_{i=1}^d \lambda_i \mathbf{a}_i$ with $\lambda_1,\ldots,\lambda_d\in \mathbb{Z}$. Therefore, we obtain $\mathbf{c}=\sum_{i=1}^d \lambda_i \mathbf{a}_i$. Since $S$ is simplicial, the set $E$ consists of linear independent vectors and since $\mathbf{c}\in \mathcal{C}$ we obtain $\lambda_i\geq 0$ for all $i\in \{1,\ldots,d\}$. So, if there exists $\lambda_j>0$, then $\mathbf{x}-\mathbf{a}_j=\mathbf{y}+\sum_{i=1,i\neq j}^t\lambda_i\mathbf{a}_i+ (\lambda_j-1)\mathbf{a}_j\in S$, a contradiction to $\mathbf{x}\in \operatorname{Ap}(S,E)$. Hence, $\lambda_i=0$ for all $i\in \{1,\ldots,d\}$, which means $\mathbf{x}=\mathbf{y}$. Conversely, suppose $\mathbf{x}\in \mathrm{Minimals}_{\leq_\mathcal{C}}\{\mathbf{y}\in S\mid \mathbf{y}\sim \mathbf{r}\}$ with $\mathbf{r}\in \operatorname{Ap}(\mathcal{C},E)$. Assuming $\mathbf{x}\notin \operatorname{Ap}(S,E)$, we obtain $\mathbf{x}-\mathbf{a}_i\in S$ for some $i\in \{1,\ldots,d\}$. Moreover, $\mathbf{x}-\mathbf{a}_i \sim \mathbf{x}$ and $\mathbf{x}-\mathbf{a}_i\leq_\mathcal{C}\mathbf{x}$, which contradicts the minimality of $\mathbf{x}$. So, $\mathbf{x}\in \operatorname{Ap}(S,E)$.
\end{proof}

\begin{remark}\rm
Assume $S$ is a generalized numerical semigroup, that is, $\mathcal{C}=\mathbb{N}^d$. Denoting by $\mathbf{e}_1,\ldots,\mathbf{e}_d$ the standard basis vectors in $\mathbb{N}^d$, for each $i\in \{1,\ldots,d\}$ we can express the extremal ray as $\mathbf{a}_i=a_i\mathbf{e}_i$ for some $a_i\in \mathbb{N}$. Therefore, in this case $\operatorname{Ap}(\mathcal{C},E)=\{(n_1,\ldots,n_d)\in \mathbb{N}^d\mid n_i<a_i\text{ for all }i\in \{1,\ldots,d\}\}$. Moreover, if $\mathbf{x}=(x_1,\ldots,x_d)\in \mathbb{N}^d$ and $\mathbf{x}\sim \mathbf{r}$ with $\mathbf{r}\in \operatorname{Ap}(\mathcal{C},E)$, then $\mathbf{r}=(r_1,\ldots,r_d)$ with $0\leq r_i<a_i$ and $r_i \equiv x_i \mod a_i$ for all $i\in \{1,\ldots,t\}$. In the very special case that $S$ is a numerical semigroup, we have $\mathcal{C}=\mathbb{N}$, $E=\{m\}$ and $\operatorname{Ap}(\mathcal{C},E)=\{0,1,\ldots,m-1\}$. By these observations, the set $\operatorname{Ap}(\mathcal{C},E)$ can be seen as the set of \emph{representatives} of the element in $\mathcal{C}$ with respect to the relation $\sim$, and Lemma~\ref{lem:r-unique} is a generalization of \cite[Lemma 2.4]{ns-book}, a well-known property in the context of numerical semigroups.
\end{remark}

\begin{lemma}\label{lem:ap-Csem}
Let $S\subseteq \mathbb{N}^d$ be a simplicial affine semigroup and $E=\{\mathbf{a}_1, \ldots, \mathbf{a}_d\}\subseteq S$ be a set of extremal rays of $S$. Denote by $\mathcal{C}=\mathrm{cone}(E)\cap \mathbb{N}^d$. For every $\mathbf{r}\in \operatorname{Ap}(\mathcal{C},E)$ the following holds:
\begin{enumerate}
\item If $\mathbf{r}\in S$, then $\operatorname{Ap}(S,E)\cap [\mathbf{r}]=\{\mathbf{r}\}$.
\item If $S$ is a $\mathcal{C}$-semigroup and $\mathbf{r}\notin S$, then $\left \lvert\operatorname{Ap}(S,E)\cap [\mathbf{r}]\right \rvert \geq d$.
\end{enumerate}
\end{lemma}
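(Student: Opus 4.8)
The plan is to reduce everything to the order structure on a single $\sim$-class described in Lemma~\ref{lem:r-unique}. Fix $\mathbf{r}\in\operatorname{Ap}(\mathcal{C},E)$. By Lemma~\ref{lem:r-unique}(1) the class $[\mathbf{r}]\cap\mathcal{C}$ equals $\{\mathbf{r}+\sum_{i=1}^d\lambda_i\mathbf{a}_i\mid(\lambda_1,\ldots,\lambda_d)\in\mathbb{N}^d\}$, and by linear independence of $E$ the assignment $(\lambda_i)_i\mapsto\mathbf{r}+\sum_i\lambda_i\mathbf{a}_i$ is a bijection $\mathbb{N}^d\to[\mathbf{r}]\cap\mathcal{C}$ under which $\leq_{\mathcal{C}}$ becomes the componentwise order: a difference $\sum_i(\mu_i-\lambda_i)\mathbf{a}_i$ lies in $\mathcal{C}=\mathrm{cone}(E)\cap\mathbb{N}^d$ exactly when all $\mu_i-\lambda_i\geq 0$. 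Together with the identity $\operatorname{Ap}(S,E)\cap[\mathbf{r}]=\mathrm{Minimals}_{\leq_{\mathcal{C}}}([\mathbf{r}]\cap S)$ coming from Lemma~\ref{lem:r-unique}(2), this turns both claims into statements about minimal elements of a subset of $\mathbb{N}^d$.

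For (1), I first note that $\mathbf{r}\in\operatorname{Ap}(\mathcal{C},E)$ gives $\mathbf{r}-\mathbf{a}_i\notin\mathcal{C}\supseteq S$ for every $i$, so the hypothesis $\mathbf{r}\in S$ already yields $\mathbf{r}\in\operatorname{Ap}(S,E)$. For uniqueness I show that $\mathbf{r}$ is the $\leq_{\mathcal{C}}$-minimum of $[\mathbf{r}]\cap S$: any $\mathbf{y}\in[\mathbf{r}]\cap S$ has, by Lemma~\ref{lem:r-unique}(1), the form $\mathbf{y}=\mathbf{r}+\sum_i\lambda_i\mathbf{a}_i$ with all $\lambda_i\geq 0$, hence $\mathbf{y}-\mathbf{r}\in\mathcal{C}$ and $\mathbf{r}\leq_{\mathcal{C}}\mathbf{y}$. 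Thus $\mathrm{Minimals}_{\leq_{\mathcal{C}}}([\mathbf{r}]\cap S)=\{\mathbf{r}\}$ and the displayed identity gives $\operatorname{Ap}(S,E)\cap[\mathbf{r}]=\{\mathbf{r}\}$.

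For (2) the idea is to exhibit $d$ distinct minimal elements directly, one along each extremal direction. Since $S$ is a $\mathcal{C}$-semigroup, $\mathcal{C}\setminus S$ is finite, so for each $i$ the set $\{t\in\mathbb{N}\mid\mathbf{r}+t\mathbf{a}_i\in S\}$ is cofinite; let $a_i$ be its minimum, and note $a_i\geq 1$ because $\mathbf{r}\notin S$. I claim each $\mathbf{w}_i:=\mathbf{r}+a_i\mathbf{a}_i$ lies in $\operatorname{Ap}(S,E)\cap[\mathbf{r}]$: it is in $S$ by construction; $\mathbf{w}_i-\mathbf{a}_i=\mathbf{r}+(a_i-1)\mathbf{a}_i\notin S$ by minimality of $a_i$; and for $j\neq i$ the element $\mathbf{w}_i-\mathbf{a}_j=\mathbf{r}+a_i\mathbf{a}_i-\mathbf{a}_j$ has class-coordinate vector with a $-1$ in position $j$, so by the uniqueness in Lemma~\ref{lem:r-unique}(1) it cannot lie in $\mathcal{C}$, and a fortiori not in $S$. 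The $\mathbf{w}_i$ are pairwise distinct, since $\mathbf{w}_i=\mathbf{w}_j$ with $i\neq j$ would force $a_i\mathbf{a}_i=a_j\mathbf{a}_j$ with $a_i,a_j\geq 1$, contradicting the linear independence of $E$. This yields the required $d$ elements and hence $\lvert\operatorname{Ap}(S,E)\cap[\mathbf{r}]\rvert\geq d$.

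The only real subtlety—the step I would treat most carefully—is the verification that $\mathbf{w}_i-\mathbf{a}_j\notin S$ for $j\neq i$: it rests entirely on reading off coefficients in the class coordinates and on the fact that a lattice point whose unique $\operatorname{Ap}(\mathcal{C},E)$-representation carries a negative $\mathbf{a}_j$-coefficient cannot belong to $\mathcal{C}$. Everything else is a direct translation through the order identification of Lemma~\ref{lem:r-unique}, with the $\mathcal{C}$-semigroup hypothesis entering only to guarantee that the integers $a_i$ exist.
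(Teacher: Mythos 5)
Your proposal is correct and follows essentially the same route as the paper: part (1) rests on the unique nonnegative representation from Lemma~\ref{lem:r-unique}, and part (2) constructs the same $d$ elements $\mathbf{r}+\alpha_i\mathbf{a}_i$ with $\alpha_i$ minimal. The only difference is cosmetic — you phrase (1) through the $\mathrm{Minimals}_{\leq_{\mathcal{C}}}$ characterization and spell out the verifications (that $\mathbf{r}+\alpha_i\mathbf{a}_i-\mathbf{a}_j\notin\mathcal{C}$ for $j\neq i$, and that the $d$ elements are pairwise distinct) which the paper leaves as ``not difficult to see.''
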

\begin{proof}
Let $\mathbf{r}\in \operatorname{Ap}(\mathcal{C},E)$. Suppose first that $\mathbf{r}\in S$. Then it is trivial that $\mathbf{r}\in \operatorname{Ap}(S,E)$, since $S\subseteq \mathcal{C}$. Moreover, assume there exists $\mathbf{x}\in \operatorname{Ap}(S,E)$ such that $\mathbf{x}\sim \mathbf{r}$. Since $\mathbf{x}\in \mathcal{C}$, by Lemma~\ref{lem:r-unique} we have $\mathbf{x}=\mathbf{r}+\sum_{i=1}^d \lambda_i \mathbf{a}_i$ with $\lambda_1,\ldots,\lambda_d\in \mathbb{N}$. If there exists $j\in \{1,\ldots,d\}$ such that $\lambda_j>0$, then $\mathbf{x}-\mathbf{n}_j=\mathbf{r}+\sum_{i=1,i\neq j}\lambda_i\mathbf{a}_i+(\lambda_j-1)\mathbf{a}_j\in S$, contradicting $\mathbf{x}\in \operatorname{Ap}(S,E)$. Therefore, we obtain $\mathbf{x}=\mathbf{r}$ and, as a consequence, if $\mathbf{r}\in S$, then $\operatorname{Ap}(S,E)\cap [\mathbf{r}]=\{\mathbf{r}\}$. Now suppose $S$ is a $\mathcal{C}$-semigroup and $\mathbf{r}\notin S$. Then, for all $i\in \{1,\ldots,d\}$ we can define $\alpha_i=\min\{\lambda\in \mathbb{N}\mid \mathbf{r}+\lambda \mathbf{a}_i\in S\}$. So, it is not difficult to see that, for every $i\in \{1,\ldots,d\}$, we have $\mathbf{r}+\alpha_i\mathbf{a}_i \in \operatorname{Ap}(S,E)$ and trivially $(\mathbf{r}+\alpha_i\mathbf{a}_i)\sim \mathbf{r}$. Therefore, we obtain $\left \lvert\operatorname{Ap}(S,E)\cap [\mathbf{r}]\right \rvert \geq d$. 
\end{proof}

\begin{example}\rm
It can occur that the inequality in (2) of Lemma~\ref{lem:ap-Csem} is not sharp. For instance,  
consider the affine semigroup $S=\langle (0,3),(0,4),(0,5),(3,0),(4,0),(5,0),(1,2),(3,1)\rangle $. It is possible to verify that $S$ is a generalized numerical semigroup having the following set of gaps:
\begin{align*}   
 \mathcal{H}(S)=\left\lbrace
\begin{array}{c}
   (0, 1), (0, 2), (1, 0), (1, 1), (1, 3), (1, 4), (2, 0), (2, 1),\\ (2, 2), (2, 3), (2, 5), (2, 6), (3, 2), (4, 1), 
  (5, 1)
\end{array}
\right\rbrace .
\end{align*} 
In this case, $E=\{(0,3),(3,0)\}$ is a set of extremal rays of $S$ and $(7,1),(1,7),(4,4) \in \operatorname{Ap}(S,E)$, where
each one belongs to the equivalence class of $(1,1)\in \operatorname{Ap}(\mathbb{N}^2,E)$.
\end{example}

The relation between ideals of an integer affine cone and Buchsbaum simplicial $\mathcal{C}$-semigroup, expressed in Corollary~\ref{cor:buchs-ideal}, allows to provide the following result about minimal presentations, which gives an explicit expression of the formula in \cite[Theorem 13]{buchsbaum-simplicial} for $\mathcal{C}$-semigroups.

\begin{theorem}\label{thm:mu-MED}
Let $S\subseteq \mathbb{N}^d$ be a Buchsbaum simplicial affine semigroup having maximal embedding dimension. Assume $\mathrm{msg}(S)=\{\mathbf{a}_1, \ldots, \mathbf{a}_d\}\sqcup \{\mathbf{a}_{d+1},\ldots,\mathbf{a}_{d+m}\}$, where $E=\{\mathbf{a}_1, \ldots, \mathbf{a}_d\}$ is a set of extremal rays of $S$. If $S$ is a $\mathcal{C}$-semigroup, then
$$\mu(I_S) = \frac{1}{2}\left(m(m+1)+d(d-1)\operatorname{g}(S)\right).$$
\end{theorem}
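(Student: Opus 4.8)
The plan is to reduce the computation of $\mu(I_S)$ to a purely combinatorial count on the Apéry set $\operatorname{Ap}(S,E)$, and then read that count off from the formula of \cite[Theorem 13]{buchsbaum-simplicial}, which already expresses the size of a minimal presentation of a Buchsbaum simplicial maximal embedding dimension semigroup in terms of $\operatorname{Ap}(S,E)$ organized by the $\sim$-classes (the $\operatorname{G}(E)$-cosets defined before Lemma~\ref{lem:r-unique}). Since $S$ has maximal embedding dimension, $\operatorname{Ap}(S,E)=\{\mathbf 0,\mathbf a_{d+1},\ldots,\mathbf a_{d+m}\}$, so $|\operatorname{Ap}(S,E)|=m+1$. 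The ``base'' part of the formula, accounting for the products of pairs of the $m$ non-extremal generators, contributes $\binom{m+1}{2}=\tfrac12 m(m+1)$; this is already the entire answer when $d=1$, recovering the classical count for maximal embedding dimension numerical semigroups. It then remains to evaluate the correction coming from the nontrivial $\sim$-classes and to show it equals $\binom{d}{2}\operatorname g(S)=\tfrac12 d(d-1)\operatorname g(S)$.

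The heart of the proof is a structural description of $\operatorname{Ap}(S,E)$ in the case $d>1$. By Theorem~\ref{thm:H=PF} the Buchsbaum hypothesis gives $\mathcal H(S)=\operatorname{PF}(S)$, and by Corollary~\ref{cor:buchs-ideal} the set $S\setminus\{\mathbf 0\}$ is an ideal of $\mathcal C$. First I would show every gap is a representative, i.e. $\mathcal H(S)\subseteq\operatorname{Ap}(\mathcal C,E)$: if $\mathbf h\in\mathcal H(S)$ and $\mathbf h-\mathbf a_i\in\mathcal C$ for some $i$, then $\mathbf h-\mathbf a_i$ is either in $S$ (forcing $\mathbf h\in S$) or a gap, hence pseudo-Frobenius (again forcing $\mathbf h=(\mathbf h-\mathbf a_i)+\mathbf a_i\in S$), a contradiction; thus $\mathbf h-\mathbf a_i\notin\mathcal C$ for all $i$. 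Conversely any $\mathbf r\in\operatorname{Ap}(\mathcal C,E)\setminus S$ is a gap, so the gap-representatives are exactly the $\operatorname g(S)$ gaps. Next, for such a gap-representative $\mathbf h$, I would prove
$$\operatorname{Ap}(S,E)\cap[\mathbf h]=\{\mathbf h+\mathbf a_1,\ldots,\mathbf h+\mathbf a_d\},$$
a set of exactly $d$ elements: each $\mathbf h+\mathbf a_i$ lies in $S$ (pseudo-Frobenius) and in $\operatorname{Ap}(S,E)$, since $(\mathbf h+\mathbf a_i)-\mathbf a_i=\mathbf h\notin S$ while for $j\neq i$ the uniqueness in Lemma~\ref{lem:r-unique}(1) forces $(\mathbf h+\mathbf a_i)-\mathbf a_j\notin\mathcal C$; and conversely any Apéry element $\mathbf h+\sum_k\lambda_k\mathbf a_k$ with $\sum_k\lambda_k\geq 2$ admits some $\mathbf a_k$ that can be subtracted while remaining in $S$, using that $S\setminus\{\mathbf 0\}$ is an ideal of $\mathcal C$. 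For representatives lying in $S$, Lemma~\ref{lem:ap-Csem}(1) gives a singleton class. Hence $\operatorname{Ap}(S,E)$ decomposes into singleton classes together with exactly $\operatorname g(S)$ classes of size $d$.

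Finally I would substitute this class decomposition into \cite[Theorem 13]{buchsbaum-simplicial}. A singleton class contributes nothing to the correction ($\binom12=0$), while each gap-class $\{\mathbf h+\mathbf a_1,\ldots,\mathbf h+\mathbf a_d\}$ contributes the $\binom d2$ binomials identifying, for $i<j$, the two factorizations $(\mathbf h+\mathbf a_i)+\mathbf a_j=(\mathbf h+\mathbf a_j)+\mathbf a_i$ of the common value $\mathbf h+\mathbf a_i+\mathbf a_j$; these are the $2\times2$ minors of a $2\times d$ matrix of variables and hence a minimal family of $\binom d2$ relations. Summing over the $\operatorname g(S)$ gap-classes yields $\binom d2\operatorname g(S)$, so $\mu(I_S)=\binom{m+1}{2}+\binom d2\operatorname g(S)=\tfrac12\bigl(m(m+1)+d(d-1)\operatorname g(S)\bigr)$, with the case $d=1$ being classical. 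The main obstacle is precisely this last matching: verifying that \cite[Theorem 13]{buchsbaum-simplicial} assigns exactly $\binom d2$ generators to each size-$d$ class and that these are independent of, and not redundant with, the $\binom{m+1}{2}$ quadratic relations — i.e. that no degree $\mathbf h+\mathbf a_i+\mathbf a_j$ silently merges the two families. This minimality is exactly what the cited theorem supplies, so the remaining work is to phrase the class computation above so that it plugs directly into its hypotheses.
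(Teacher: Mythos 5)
Your proposal is correct and follows essentially the same route as the paper: both reduce to the formula of \cite[Theorem 13]{buchsbaum-simplicial} and then show that the classes contributing the $\binom{d}{2}$-correction are in bijection with $\mathcal{H}(S)$ by proving $\operatorname{Ap}(\mathcal{C},E)\setminus S=\mathcal{H}(S)$ via the ideal/pseudo-Frobenius property. The only difference is that where the paper cites \cite[Theorem 9]{buchsbaum-simplicial} together with Lemma~\ref{lem:ap-Csem}(2) to get $\lvert\operatorname{Ap}(S,E)\cap[\mathbf{r}]\rvert=d$ for gap representatives, you determine the fiber explicitly as $\{\mathbf{h}+\mathbf{a}_1,\ldots,\mathbf{h}+\mathbf{a}_d\}$, which is a slightly more self-contained verification of the same fact.
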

\begin{proof} If $d=1$, then $S$ is a maximal embedding dimension numerical semigroup, up to isomorphism. In this case, the result follows from \cite[Theorem 8.30]{ns-book}. So, assume $d\geq 2$. Since $S$ is Buchsbaum and $S$ has maximal emebdding dimension (or minimal Ap\'ery set), by \cite[Theorem 13]{buchsbaum-simplicial} and with reference to the equivalence relation $\sim$, we have $\mu(I_S)=\frac{m(m+1)}{2}+\lambda \frac{d(d-1)}{2}$, where 
$$\lambda=|\left \lbrace[\mathbf{x}]\mid \mathbf{x}
\in \mathcal{C}, \left \lvert\operatorname{Ap}(S,E)\cap [\mathbf{x}]\right \rvert = d\right \rbrace |.$$
So, it is sufficient to prove that $\lambda=\operatorname{g}(S)$. If $\mathbf{x}\in \mathcal{C}$, by Lemma~\ref{lem:r-unique} $[\mathbf{\mathbf{x}}]=[\mathbf{r}]$ for a unique $\mathbf{r}\in \operatorname{Ap}(\mathcal{C},E)$. Moreover, since $S$ is Buchsbaum, if $\mathbf{r}\notin S$, then by Lemma~\ref{lem:ap-Csem} and \cite[Theorem 9]{buchsbaum-simplicial} we obtain $|\operatorname{Ap}(S,E)\cap [\mathbf{r}]|=d$. While, again by Lemma~\ref{lem:ap-Csem}, if $\mathbf{r}\in S$, then the class $[\mathbf{r}]$ does not contribute to computation of $\lambda$. Therefore, it follows $\lambda=|\operatorname{Ap}(\mathcal{C},E)\setminus S|$. In order to conclude, we show that $\operatorname{Ap}(\mathcal{C},E)\setminus S=\mathcal{H}(S)$. The inclusion $\operatorname{Ap}(\mathcal{C},E)\setminus S \subseteq \mathcal{H}(S)$ is trivial. For the converse, let $\mathbf{h}\in \mathcal{H}(S)$ and assume $\mathbf{h}\notin \operatorname{Ap}(\mathcal{C},E)$. Hence, there exists $\mathbf{a}_i$ such that $\mathbf{h}-\mathbf{a}_i=\mathbf{c}\in \mathcal{C}$. By Corollary~\ref{cor:buchs-ideal}, $S\setminus \{\mathbf{0}\}$ is an ideal of $\mathcal{C}$. Hence, since $\mathbf{a}_i\in S$, we obtain $\mathbf{h}=\mathbf{a}_i+\mathbf{c}\in S$, a contradiction. Therefore, $\mathbf{h}\in \operatorname{Ap}(\mathcal{C},E)$ and this conclude the proof.
\end{proof}

\begin{example}\rm

Let $\mathcal{C}=\mathrm{Span}_{\mathbb{Q}_{\geq 0}}(\{(1,1),(3,1)\})\cap \mathbb{N}^2=\langle (1,1), (2,1),(3,1)\rangle$ and consider the $\mathcal{C}$-semigroup $S=\mathcal{C}\setminus \{(1,1),(2,1)\}$.  In this case 
$$\mathrm{msg}(S)= \{ (2,2),(3,1)\} \cup \{(3,2),(3,3),(4,2),(4,3),(5,2) \}$$
where $\{ (2,2),(3,1)\}$ is the set of minimal extremal rays of $S$. By Theorem~\ref{thm:buchs-MED}, it is possible to verify that $S$ has maximal embedding dimension. We can compute a minimal presentation of $S$ and its cardinality with the help of a computer algebra software. For instance, we can perform this computation by the \texttt{GAP} (\cite{gap}) package \texttt{numericalsgps} (\cite{numericalsgps}), in the following way:
\begin{verbatim}
gap> G:=[[2,2],[3,1],[3,2],[3,3],[4,2],[4,3],[5,2]];;
gap> s:=AffineSemigroup(G);
<Affine semigroup in 2 dimensional space, with 7 generators>
gap> m:=MinimalPresentationOfAffineSemigroup(s);;
gap> Length(m);
17
\end{verbatim}  
So, $\mu(I_S)=17$ and, with reference to the notation of Theorem~\ref{thm:mu-MED}, we have $m=5$, $t=2$ and $\operatorname{g}(S)=2$. In fact, $17=(5\cdot 6+2\cdot 2)/2$.
\end{example}

Interestingly, Theorem~\ref{thm:mu-MED} gives a very nice relationship between the first and last Betti number of the semigroup ring associated to a Buchsbaum maximal embedding dimension simplicial $\mathcal{C}$-semigroup. Since the projective dimension of a semigroup ring associated to $\mathcal{C}$-semigroup minimally generated by $\{\mathbf{a}_1, \ldots, \mathbf{a}_n\}$ is $n-1$, we will denote the first and last Betti number by $\beta_1$ and $\beta_{n-1}$ respectively, 

\begin{corollary}
    Let $S = \langle \mathbf{a}_1, \ldots, \mathbf{a}_n \rangle \subseteq \mathbb{N}^d$ be a Buchsbaum simplicial $\mathcal{C}$-semigroup. If $S$ has maximal embedding dimension, then
$$\beta_{n-1} = \frac{2 \beta_1- (n-d)(n-d+1)}{d(d-1)}.$$
\end{corollary}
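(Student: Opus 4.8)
The plan is to reduce the statement to the formula of Theorem~\ref{thm:mu-MED} by interpreting both Betti numbers as concrete semigroup invariants. First I would recall that the minimal free resolution of $\mathbb{K}[S]$ over $\mathbb{K}[x_1,\ldots,x_n]$ begins with a minimal generating set of $I_S$, so that $\beta_1=\mu(I_S)$. Next I would use that $S$, being a $\mathcal{C}$-semigroup, is in particular a maximal projective dimension semigroup: by \cite[Theorem~6]{garcia-pseudofrobenius} the projective dimension of $\mathbb{K}[S]$ is $n-1$, which justifies indexing the last Betti number as $\beta_{n-1}$, and the theory developed there identifies this top Betti number with the number of pseudo-Frobenius elements, $\beta_{n-1}=|\operatorname{PF}(S)|$.

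The key step is then to evaluate $|\operatorname{PF}(S)|$. Since $S$ is Buchsbaum and simplicial (and, for the stated formula to make sense, $d\geq 2$, which is also what Theorem~\ref{thm:H=PF} requires), Theorem~\ref{thm:H=PF} gives $\mathcal{H}(S)=\operatorname{PF}(S)$. Hence
$$\beta_{n-1}=|\operatorname{PF}(S)|=|\mathcal{H}(S)|=\operatorname{g}(S).$$
So the content of the corollary is really the equality $\beta_{n-1}=\operatorname{g}(S)$ together with the computation of $\beta_1$.

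Finally I would substitute into Theorem~\ref{thm:mu-MED}. Writing $\mathrm{msg}(S)=E\sqcup\{\mathbf{a}_{d+1},\ldots,\mathbf{a}_{d+m}\}$ with $|E|=d$, maximal embedding dimension forces every non-extremal minimal generator to be counted, so $n=d+m$, i.e.\ $m=n-d$. Theorem~\ref{thm:mu-MED} then reads
$$\beta_1=\mu(I_S)=\tfrac12\bigl((n-d)(n-d+1)+d(d-1)\operatorname{g}(S)\bigr).$$
Solving this linear relation for $\operatorname{g}(S)$ and replacing $\operatorname{g}(S)$ by $\beta_{n-1}$ yields the asserted identity.

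I expect the only genuinely nontrivial input to be the identification $\beta_{n-1}=|\operatorname{PF}(S)|$, which is not elementary and rests on the maximal projective dimension theory of \cite{garcia-pseudofrobenius}; everything else is bookkeeping (the substitution $m=n-d$) together with the already-established Theorems~\ref{thm:H=PF} and~\ref{thm:mu-MED}.
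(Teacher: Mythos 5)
Your proposal is correct and follows essentially the same route as the paper: identify $\beta_1=\mu(I_S)$, use the maximal projective dimension theory of \cite{garcia-pseudofrobenius} (the paper cites its Corollary~7) to get $\beta_{n-1}=|\operatorname{PF}(S)|$, apply Theorem~\ref{thm:H=PF} to conclude $\beta_{n-1}=\operatorname{g}(S)$, and substitute $m=n-d$ into Theorem~\ref{thm:mu-MED}. Your remark that $d\geq 2$ is implicitly required is a reasonable point of care but does not change the argument.
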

\begin{proof}
Since $S$ is Buchsbaum, by Theorem~\ref{thm:H=PF}, we get $\operatorname{g}(S) = \vert \mathrm{PF}(S) \vert$. Also, since $S$ has maximal projective dimension, by \cite[Corollary 7]{garcia-pseudofrobenius}, we get $\vert \mathrm{PF}(S) \vert = \beta_{n-1}$. Therefore, we get $\operatorname{g}(S) = \beta_{n-1}$. Since, $\mu(I_S)$ correspond to $\beta_1$ and $S$ has maximal embedding dimension, by Theorem~\ref{thm:mu-MED}, we get
$$\beta_1 = \frac{1}{2}[(n-d)(n-d+1) + d(d-1)\beta_{n-1}].$$
Solving this, we get the desired equality.
\end{proof}

\section{Concluding remarks}

\begin{definition}[{\cite[Theorem 1.4]{rosales97}, \cite[Proposition 2.2]{hemagluingres}}] \rm
	Let $S \subseteq \mathbb{N}^r$ be an affine semigroup. Let $A$ be the minimal generating set of $S$ and $A = A_1 \amalg A_2$ be a nontrivial partition of $A$. Let $S_i$ be the submonoid of $\mathbb{N}^d$ generated by $A_i, i \in {1, 2}$. Then $S = S_1 + S_2.$ We say that $S$ is the gluing of $S_1$ and $S_2$ by $\mathbf{d}$ if the following two conditions hold:
	\begin{enumerate}
		\item[(i)] $\mathbf{d} \in S_1 \cap S_2$ and,
		\item[(ii)] $G(S_1 ) \cap G(S_2) = \mathbf{d}\mathbb{Z}$. 
	\end{enumerate}
	 Moreover, let $S$ be a gluing of $S_1 = \langle \mathbf{a}_1, \ldots, \mathbf{a}_n \rangle$ and $S_2 = \langle \mathbf{b}_1, \ldots, \mathbf{b}_{n'} \rangle$ by $\mathbf{d}$. Let $\mathbb{K}[S_1] \cong R_{S_1}/I_{S_1}$ and $\mathbb{K}[S_2] \cong R_{S_2}/I_{S_2}$, where $R_{S_1} = \mathbb{K}[x_1\ldots, x_n]$ and $R_{S_2} = \mathbb{K}[y_1, \ldots,y_{n'}]$ are polynomial rings over $\mathbb{K}$ on different set of variables. Then the ideal $I_S \subseteq R = \mathbb{K}[x_1,\ldots,x_n,y_1,\ldots,y_{n'}]$ is minimally generated by the minimal generators of $I_{S_1}$ and $I_{S_2}$ and a binomial $\rho$. The binomial $\rho$ is defined as $\rho = \mathbf{x^u} - \mathbf{y^v}$ where $\mathbf{u} = (u_1, \ldots, u_n)$ and $\mathbf{v} = (v_1,\ldots,v_{n'})$ such that $\mathbf{d} = \sum_{i=1}^n u_i\mathbf{a_i} = \sum_{i=1}^{n'}v_i\mathbf{b_i}.$
\end{definition}  

\begin{remark}\rm
Let $S$ be a gluing of two affine semigroups. Then $S$ is a complete intersection if and only if $S$ is a gluing of two complete intersection affine semigroups (\cite[cf. Theorem 3.1]{fischermorrisshapiro}]). Also, it is proved that $\mathbb{K}[S]$ is Cohen-Macaulay if and only if $\mathbb{K}[S_1]$ and $\mathbb{K}[S_2]$ are Cohen-Macaulay, $\mathbb{K}[S]$ is Gorenstein if and only if $\mathbb{K}[S_1]$ and $\mathbb{K}[S_2]$ are Gorenstein (\cite[Theorem 1.5]{hemagluing}), $\mathbb{K}[S]$ is of maximal projective dimension if and only if $\mathbb{K}[S_1]$ and $\mathbb{K}[S_2]$ are of maximal projective dimension (\cite[Theorem 2.6]{affineI}, \cite[Proposition 6.6]{affineII}). Unlike these properties, we show by an example that the Buchsbaum property is not preserved under gluing. 
\end{remark}

\begin{example}\rm \textbf{Gluing of two Buchsbaum affine semigroups may not be Buchsbaum:}
Consider the affine semigroup
\[
S = \langle (0,7), (14,0), (21,0), (7,7), (6,12), (8,16) \rangle.
\]
Decompose $S$ into two subsemigroups
\[
S_1 = \langle (0,7), (14,0), (21,0), (7,7) \rangle \quad \text{and} \quad 
S_2 = \langle (6,12), (8,16) \rangle.
\]
Now, one can observe that $S$ is a gluing of $S_1$ and $S_2$ along the element $(14,28)$. Now, define the semigroup
\[
S_3 = \langle (0,1), (2,0), (3,0), (1,1) \rangle.
\]
Note that $S_1 = 7 \cdot S_3$, and hence the corresponding semigroup rings $\mathbb{K}[S_1]$ and $ \mathbb{K}[S_3]$ are isomorphic. For $S_3$, we have $\mathcal{H}(S_3) = \mathrm{PF}(S_3) = \{(1,0)\}.$ Therefore, by Theorem~\ref{thm:H=PF}, the semigroup ring $\mathbb{K}[S_3]$ is Buchsbaum. Since $\mathbb{K}[S_1] \cong \mathbb{K}[S_3]$, we conclude that $\mathbb{K}[S_1]$ is also Buchsbaum.
On the other hand, $S_2$ is a one-dimensional semigroup, and any one-dimensional semigroup ring is Buchsbaum. Thus, $\mathbb{K}[S_2]$ is Buchsbaum as well.
Now consider $S$. Its minimal extremal rays are $\{(0,7), (14,0)\}$. Consider the element $(15,16)$. Observe that $(15,16) \in G(S)$, the group generated by $S$. Also, 
\[
(15,16) + 2(0,7) = (15,30) \in S, \quad \text{and} \quad (15,16) + 2(14,0) = (43,16) \in S.
\]
Therefore, $(15,16)$ belongs to $\mathcal{D}(S)$. However, by \cite[Theorem~2.6]{affineI}, we have $\mathrm{PF}(S) = \{(31,48)\}.$ Thus, $(15,16) \in \mathcal{D}(S)$ but $(15,16) \notin \mathrm{PF}(S)$. By Proposition~\ref{prop:buchsbaum}, this implies that $\mathbb{K}[S]$ is \textbf{not} Buchsbaum.
\end{example}

This instantly raise a natural and important question:

\begin{question}  
Let $S$ be an affine semigroup obtained by gluing of two Buchsbaum affine semigroups $S_1$ and $S_2$.  
Give a necessary and sufficient condition for $S$ to be Buchsbaum in terms of $S_1$ and $S_2$.
\end{question}

  An algebraic generalization of the Buchsbaum property for commutative rings was given by Trung in \cite{trung-gen}, where the author introduced the concept of the \emph{Generalized Cohen-Macaulay} property. To the best of our knowledge, there are no known results that relate the Generalized Cohen-Macaulay property to semigroup rings, analogous to the known characterizations of Buchsbaum, Cohen-Macaulay, and Gorenstein semigroup rings. Apart from a possible general characterization of Generalized Cohen–Macaulay affine semigroups, we ask whether there exist simplicial $\mathcal{C}$-semigroups $S$ such that $\mathbb{K}[S]$ is Generalized Cohen-Macaulay but not Buchsbaum.

\begin{question}
    Does there exist a Generalized Cohen Macaulay simplicial $\mathcal{C}$-semigroup which is not Buchsbaum?
\end{question}

{\it Acknowledgement.} The first author would like to thank Chennai Mathematical Institute and Infosys Foundation for financial support. The second author is a member of the group GNSAGA of Istituto Nazionale di Alta Matematica (INdAM), Italy.

\bibliographystyle{plain}
\bibliography{biblio-2}

\begin{thebibliography}{10}

\bibitem{affineI}
O.~P. Bhardwaj, K.~Goel, and I.~Sengupta.
\newblock Affine semigroups of maximal projective dimension.
\newblock {\em Collect. Math.}, 74(3):703--727, 2023.

\bibitem{affineII}
O.~P. Bhardwaj and I.~Sengupta.
\newblock Affine semigroups of maximal projective dimension---{II}.
\newblock {\em Semigroup Forum}, 108(1):20--42, 2024.

\bibitem{bresinsky-schenzel-vogel}
H.~Bresinsky, P.~Schenzel, and W.~Vogel.
\newblock On liaison, arithmetical {B}uchsbaum curves and monomial curves in
  {${\bf P}^{3}$}.
\newblock {\em J. Algebra}, 86(2):283--301, 1984.

\bibitem{bruns-gub}
W.~Bruns and J.~Gubeladze.
\newblock {\em Polytopes, rings, and {$K$}-theory}.
\newblock Springer Monographs in Mathematics. Springer, Dordrecht, 2009.

\bibitem{bruns-herzog}
W.~Bruns and J.~Herzog.
\newblock {\em Cohen-{M}acaulay rings}, volume~39 of {\em Cambridge Studies in
  Advanced Mathematics}.
\newblock Cambridge University Press, Cambridge, 1993.

\bibitem{numericalsgps}
M.~Delgado, P.~A. Garcia-Sanchez, and J.~Morais.
\newblock {NumericalSgps}, a package for numerical semigroups, {V}ersion 1.3.1.
\newblock \href {https://gap-packages.github.io/numericalsgps}
  {\texttt{https://gap-packages.github.io/}\discretionary
  {}{}{}\texttt{numericalsgps}}, Jul 2022.
\newblock Refereed GAP package.

\bibitem{etocodimension2}
K.~Eto.
\newblock Defining ideals of affine semigroup rings of codimension {$2$}.
\newblock {\em J. Algebra}, 184(3):1161--1174, 1996.

\bibitem{failla2016algorithms}
G.~Failla, C.~Peterson, and R.~Utano.
\newblock Algorithms and basic asymptotics for generalized numerical semigroups
  in $\mathbb{N}^d$.
\newblock {\em Semigroup Forum}, 92:460--473, 2016.

\bibitem{fischermorrisshapiro}
K.~G. Fischer, W.~Morris, and J.~Shapiro.
\newblock Affine semigroup rings that are complete intersections.
\newblock {\em Proc. Amer. Math. Soc.}, 125(11):3137--3145, 1997.

\bibitem{gap}
The GAP~Group.
\newblock {\em {GAP -- Groups, Algorithms, and Programming, Version 4.13.0}},
  2024.

\bibitem{garcia-someProperties}
J.~I. Garc\'ia-Garc\'ia, D.~Mar\'in-Arag\'on, A.~S\'anchez-Loureiro, and
  A.~Vigneron-Tenorio.
\newblock Some properties of affine $\mathcal{C}$-semigroups.
\newblock {\em Results Math.}, 79(2):Paper No. 52, 19, 2024.

\bibitem{garcia2018extension}
J.~I. Garc{\'\i}a-Garc{\'\i}a, D.~Mar{\'\i}n-Arag{\'o}n, and
  A.~Vigneron-Tenorio.
\newblock An extension of {W}ilf’s conjecture to affine semigroups.
\newblock {\em Semigroup Forum}, 96(2):396--408, 2018.

\bibitem{garcia-pseudofrobenius}
J.~I. Garc\'ia-Garc\'ia, I.~Ojeda, J.~C. Rosales, and A.~Vigneron-Tenorio.
\newblock On pseudo-{F}robenius elements of submonoids of {$\mathbb{N}^d$}.
\newblock {\em Collect. Math.}, 71(1):189--204, 2020.

\bibitem{garcia-MED}
J.~I. Garc\'ia-Garc\'ia, R.~Tapia-Ramos, and A.~Vigneron-Tenorio.
\newblock On ideals of affine semigroups and affine semigroups with maximal
  embedding dimension.
\newblock {\em Open Math.}, 22(1):Paper No. 20240101, 14, 2024.

\bibitem{overview}
P.~A. Garc\'ia-S\'anchez.
\newblock An overview of the computational aspects of nonunique factorization
  invariants.
\newblock In {\em Multiplicative ideal theory and factorization theory}, volume
  170 of {\em Springer Proc. Math. Stat.}, pages 159--181. Springer, [Cham],
  2016.

\bibitem{buchsbaum-simplicial}
P.~A. Garc\'ia-S\'anchez and J.~C. Rosales.
\newblock On {B}uchsbaum simplicial affine semigroups.
\newblock {\em Pacific J. Math.}, 202(2):329--339, 2002.

\bibitem{hemagluingres}
P.~Gimenez and H.~Srinivasan.
\newblock The structure of the minimal free resolution of semigroup rings
  obtained by gluing.
\newblock {\em J. Pure Appl. Algebra}, 223(4):1411--1426, 2019.

\bibitem{hemagluing}
P.~Gimenez and H.~Srinivasan.
\newblock On gluing semigroups in {$\mathbb{N}^n$} and the consequences.
\newblock {\em Res. Math. Sci.}, 9(2):Paper No. 23, 14, 2022.

\bibitem{goto2}
S.~Goto.
\newblock On the {C}ohen-{M}acaulayfication of certain {B}uchsbaum rings.
\newblock {\em Nagoya Math. J.}, 80:107--116, 1980.

\bibitem{goto}
S.~Goto.
\newblock Buchsbaum rings of maximal embedding dimension.
\newblock {\em J. Algebra}, 76(2):383--399, 1982.

\bibitem{jafari2024depth}
R.~Jafari and M.~Yaghmaei.
\newblock Type and conductor of simplicial affine semigroups.
\newblock {\em J. Pure Appl. Algebra}, 226(3):Paper No. 106844, 19, 2022.

\bibitem{kamoicm}
Y.~Kamoi.
\newblock Defining ideals of {C}ohen-{M}acaulay semigroup rings.
\newblock {\em Comm. Algebra}, 20(11):3163--3189, 1992.

\bibitem{kamoibuchsbaum}
Y.~Kamoi.
\newblock Defining ideals of {B}uchsbaum semigroup rings.
\newblock {\em Nagoya Math. J.}, 136:115--131, 1994.

\bibitem{lam-trung}
T.~T.~G. Lam and N.~V. Trung.
\newblock Buchsbaumness and {C}astelnuovo-{M}umford regularity of non-smooth
  monomial curves.
\newblock {\em J. Algebra}, 590:313--337, 2022.

\bibitem{Li}
S.~Li.
\newblock On the number of generalized numerical semigroups.
\newblock {\em arXiv preprint arXiv:2212.13740}, 2022.

\bibitem{rosales97}
J.~C. Rosales.
\newblock On presentations of subsemigroups of {${\bf N}^n$}.
\newblock {\em Semigroup Forum}, 55(2):152--159, 1997.

\bibitem{rosalesCohen}
J.~C. Rosales and P.~A. Garc\'ia-S\'anchez.
\newblock On {C}ohen-{M}acaulay and {G}orenstein simplicial affine semigroups.
\newblock {\em Proc. Edinburgh Math. Soc. (2)}, 41(3):517--537, 1998.

\bibitem{fg}
J.~C. Rosales and P.~A. Garc\'ia-S\'anchez.
\newblock {\em Finitely generated commutative monoids}.
\newblock Nova Science Publishers, Inc., Commack, NY, 1999.

\bibitem{ns-book}
J.~C. Rosales and P.~A. Garc\'ia-S\'anchez.
\newblock {\em Numerical semigroups}, volume~20 of {\em Developments in
  Mathematics}.
\newblock Springer, New York, 2009.

\bibitem{stuckrad-vogel}
J.~St\"uckrad and W.~Vogel.
\newblock {\em Buchsbaum rings and applications}.
\newblock Springer-Verlag, Berlin, 1986.
\newblock An interaction between algebra, geometry and topology.

\bibitem{trungbuchsbaum}
N.~V. Trung.
\newblock Classification of the double projections of {V}eronese varieties.
\newblock {\em J. Math. Kyoto Univ.}, 22(4):567--581, 1982/83.

\bibitem{trung-gen}
N.~V. Trung.
\newblock Toward a theory of generalized {C}ohen-{M}acaulay modules.
\newblock {\em Nagoya Math. J.}, 102:1--49, 1986.

\bibitem{trung-hoa}
N.~V. Trung and L.~T. Hoa.
\newblock Affine semigroups and {C}ohen-{M}acaulay rings generated by
  monomials.
\newblock {\em Trans. Amer. Math. Soc.}, 298(1):145--167, 1986.

\end{thebibliography}

	\end{document}